\theoremstyle{plain}
\newtheorem{theorem}{Theorem}[section]
\newtheorem{lemma}[theorem]{Lemma}
\newtheorem{proposition}[theorem]{Proposition}
\newtheorem{corollary}[theorem]{Corollary}
\theoremstyle{definition}
\newtheorem{definition}[theorem]{Definition}
\newtheorem{example}{Example}
\theoremstyle{remark}
\newtheorem{remark}{Remark}
\newcommand{\RR}{\mathbb{R}}
\newcommand{\ZZ}{\mathbb{Z}}
\newcommand{\NN}{\mathbb{N}}
\newcommand{\img}{\mathrm{Im}~}
\newcommand{\mcB}{\mathcal{B}}
\newcommand{\mcC}{\mathcal{C}}
\newcommand{\mcM}{\mathcal{M}}
\newcommand{\mcS}{\mathcal{S}}
\DeclareMathOperator{\id}{id}
\DeclareMathOperator{\spanset}{span}
\DeclareMathOperator{\ambRing}{\mathcal{R}}
\def\idealComplex{\mathcal{I}^{r}}
\def\idealComplexOne{\mathcal{I}^{1}}
\def\splineRing{\mathcal{F}^{r}}
\newcommand{\totalSpace}{\mathcal{T}}
\newcommand{\subSpace}{\mathcal{J}^{r}}
\newcommand{\subSpaceOne}{\mathcal{J}^{1}}
\newcommand{\quotientSpace}{\mathcal{Q}^{r}}
\newcommand{\quotientSpaceOne}{\mathcal{Q}^{1}}
\newcommand{\GrDomain}{\Delta,\Phi}
\newcommand{\glueb}{\mathfrak{b}}
\newcommand{\gluea}{\mathfrak{a}}
\newcommand{\cellulation}{\text{\textsc{cel}}}\usepackage[shortlabels]{enumitem}
\def\ps@pprintTitle{%
 \let\@oddhead\@empty
 \let\@evenhead\@empty
 \def\@oddfoot{}%
 \let\@evenfoot\@oddfoot
}
\begin{document}
 \begin{frontmatter}
 \title{An algebraic framework for geometrically continuous splines}
 \author[inria]{Angelos Mantzaflaris}\ead{angelos.mantzaflaris@inria.fr}
 \author[inria]{Bernard Mourrain}\ead{bernard.mourrain@inria.fr}
 \author[swansea]{Nelly Villamizar} \ead{n.y.villamizar@swansea.ac.uk}
 \author[swansea]{Beihui Yuan} \ead{beihui.yuan@swansea.ac.uk}
 \address[inria]{Inria at Universit\'e C\^ote d'Azur, Sophia Antipolis, France}
 \address[swansea]{Department of Mathematics, Swansea University, Swansea, United Kingdom}
 \begin{abstract}
 Geometrically continuous splines are piecewise polynomial functions defined on a collection of patches which are stitched together through transition maps.
 They are called $G^{r}$-splines if, after composition with the transition maps, they are continuously differentiable functions to order $r$ on each pair of patches with stitched boundaries.
 This type of splines has been used to represent smooth shapes with complex topology for which (parametric) spline functions on fixed partitions are not sufficient. 
 In this article, we develop new algebraic tools to analyze $G^r$-spline spaces. 
 We define $G^{r}$-domains and transition maps using an algebraic approach, and establish an algebraic criterion to determine whether a piecewise function is $G^r$-continuous on the given domain. 
 In the proposed framework, we construct a chain complex whose top homology is isomorphic to the $G^{r}$-spline space. 
 This complex generalizes Billera-Schenck-Stillman homological complex used to study parametric splines.
 Additionally, we show how previous constructions of $G^r$-splines fit into this new algebraic framework, and present an algorithm to construct a bases for $G^r$-spline spaces. 
 We illustrate how our algebraic approach works with concrete examples, and prove a dimension formula for the $G^r$-spline space in terms of invariants to the chain complex. 
 In some special cases, explicit dimension formulas in terms of the degree of splines are also given.
 \end{abstract}
 \begin{keyword}
 Multivariate spline functions \sep geometrically continuous splines \sep geometric continuity \sep algebraic transition maps \sep dimension and bases of spline spaces.
 \MSC[2020] 13D02\sep 41A15 \sep 65D07.
 \end{keyword}
 \end{frontmatter}
 \section{Introduction}\label{sec:intro}
 Spline functions are mathematical representations built upon simpler pieces (usually defined by low-degree polynomials)
 which are glued together forming a smooth curve, surface or volume. 
 Splines constitute an appealing tool for shape representation not only for the simplicity of their construction \cite{farin2002handbook,piegl1996nurbs}, but because they are a fundamental component of the approximation of solutions of partial differential equations by the finite element method \cite{strang1973analysis}.
 They play a central role in geometric modeling \cite{farin2002handbook2}, in approximation theory \cite{LaiSchumaker}, and in novel fields such as isogeometric analysis \cite{Isogeometric,smoothiga2021}.
 
 Splines have been traditionally studied within the realm of numerical analysis and computational mathematics \cite{deBoor2001practical,LaiSchumaker}.
 They are defined as piecewise polynomial functions defined on a partition of a real domain which are continuously differentiable up to some order $r\geqslant 0$ on the whole domain. 
 These splines are called $C^r$-splines, or \emph{parametric} $C^r$-continuous splines. 
 If the polynomial degree is bounded then the set of $C^r$-splines on a given domain is a finite dimensional real vector space. 
 In the areas where splines are applied, it is important to be able to construct a basis, often with prescribed properties, for the space of $C^r$-splines on a given partitioned domain and fixed maximal polynomial degree. 
 A more basic task which aids in the construction of a basis is simply to compute the dimension of the spline space. 
 A formula for the dimension of $C^1$-spline spaces defined on triangulations of planar domains was first proposed by Strang~\cite{Strang}
 and proved for polynomial degree $d\geqslant 2$ for \textit{generic} vertex positions by Billera in~\cite{billera1988homology}.
 The seminal work in \cite{billera1988homology} unveiled fascinating connections between the study of splines and homological algebra techniques,
 putting spline theory at the interface between commutative algebra, geometric modelling, and numerical analysis. 
 The problem of computing the dimension of spline spaces on planar triangulations has received considerable attention using a wide variety of techniques. 
 Early works using Bernstein-B\'ezier methods from approximation theory include \cite{AS4r,AS3r,H91,SchumakerU}, see also \cite{LaiSchumaker} and the references therein.
 Some extensions of the algebraic approach introduced in \cite{billera1988homology} to compute the dimension of bivariate spline spaces include \cite{DimSeries,mourrain2013,MinReg,LCoho}. 
 One important feature of $C^r$-splines on triangulations is that the formula of the dimension of the space of splines of degree less than or equal to $d$ for $d\geqslant 3r+2$ is a lower bound on the dimension of the space for any degree $d\geqslant 0$ \cite{SchumakerLower}. 
 If $r+1\leqslant d\leqslant 3r+1$, to find an explicit formula for the dimension of bivariate spline spaces remains an open problem. 
 In this direction, some remarkable results have been proved using tools from rigidity theory \cite{WhiteleyComb}, and homological and commutative algebra \cite{CohVan,yuan2020,stefan,YS19}. 
 The literature on computing the dimension of trivariate splines on tetrahedral partitions is much less conclusive. 
 The dimension has been computed if $r=0$ (see~\cite{LocSup} or~\cite{Alg}), and also if $r=1$, $d\geqslant 8$, and the partition is generic ~\cite{ASWTet}. 
 For $r>1$, bounds on the dimension of trivariate spline spaces have been proved in~\cite{Tri, DV20b,diPasquale2021,Lau,mourrainV2014}. 
 
 The main contribution in this paper is to develop an algebraic framework to study spline functions which are defined on a collection of patches instead of fixed partitions of real domains. 
 The collection of patches are \emph{stitched} together through transition maps, and the continuity conditions are imposed after composition of the piecewise functions with the transition maps. 
 The splines defined on this collection of patches are called \emph{geometric continuous splines} or \emph{$G$-splines}.
 If $r\geqslant 0$ is an integer, the set of $G$-splines which (after the composition with the transition maps) are continuously differentiable functions to order $r$ on each pair of patches with stitched boundaries, are called \emph{$G^r$-continuous}, or simply \emph{$G^r$-splines}. 
 
 \emph{Geometric continuity} is essential for Computer-Aided
 Design (CAD), where typically a sophisticated design is obtained as a
 smooth surface described by many patches joint together. 
 Indeed, the concept
 originated in the automobile and aviation industry, see e.g. the early report~\cite{sabin1968conditions}.
 Parametric continuity (of first order) is instead the condition
 that the parametrization of the patches matches with continuity
 of the first order derivatives in parametric
 coordinates (corresponding to identity transition maps in our
 setting).
 Since parametric continuity is not a necessary nor a
 sufficient condition for the smoothness of the represented
 surface, the study of geometrically continuous representations
 emerged as a tool for flexible representations of CAD models. 
 Geometrically, $G^{1}$-splines are defined by the property that 
 the tangent planes of each patch match at that interface.
 These conditions involve the derivatives of the parametric coordinates
 composed with the transition map assigned to each interface.
 
 The concept of geometric continuity builds on the theory on differential manifolds.
 The first general definition of splines based
 on transition maps or reparametrizations was presented in \cite{derose1985geometric}, where it was used for building smooth surfaces from a collection of 2-dimensional patches. 
 Subsequently, the construction of geometrically continuous splines received
 considerable attention using a wide variety of techniques, here we list only a few representative references.
 Some early works include \cite{gregory1989,hahn1989geometric}, see also the survey \cite{peters2002_handbook} and the references therein.
 The construction of $G^{1}$- and $G^2$-spline surfaces has received considerable attention; see for instance
 \cite{Bonneau2014FlexibleGI,loop_smooth_1994,mourrain2016dimension, Peters:1994,peters_complexity_2010,prautzsch_freeform_1997,reif_biquadratic_1995}.
 In particular, $G^1$-splines have been used for surface fitting and surface reconstruction in geometric modeling; see 
 \cite{lin_adaptive_2007,shi_practical_2004}. 
 Manifold-based constructions of $G^r$-splines has been studied in \cite{vecchia_2008,grimm_1995,gu_manifold_2008,gu_manifold_2006,he_manifold_2006,tosun2011,ying_simple_2004}. 
 See \cite{beccari_rags:_2014}, where rational geometrically continuous spline are considered. 
 
 Research on geometrically continuous splines has been revived during the last decade due to its importance for solving partial differential equations by means of isogeometric analysis, see for instance~\cite{Sangalli_2016,smoothiga2021}, and the references therein.
 In particular, smooth splines are used for the discretization of partial differential equations and yield outstanding benefits compared to piecewise $C^0$ polynomial approximations: superior approximation properties and numerical stability, as well as the possibility to directly
 discretize high-order differential equations, such as the ones
 arising in thin-shell theory, in fracture models, in phase-field
 based multiphase flows, and in geometric flows on surfaces~\cite{smoothiga2021}.
 
 The computation of the dimension and construction of basis functions has been considered mainly for $G^1$-continuous splines. 
 The Bernstein--Bézier techniques for the construction of smooth surfaces on planar unstructured quad-meshes with linear transition maps are developed in~\cite{bercovier2017smooth}.
 For linear transition maps computed from the parametrizations of quad-meshes of planar domains, the space of ${G}^1$-smooth isogeometric spline functions
 is studied (dimension, basis) in~\cite{kapl2017,kapl2019isogeometric,KAPL201955}.
 An analogous definition of transition maps is used in~\cite{birner:hal-02271820} to construct $G^1$-splines on volumetric two-patch domains. 
 In \cite{marsala3656945}, a construction of $G^1$-surfaces approximating Catmull--Clark subdivision surfaces using quadratic transition maps is proposed.
 Algebraic properties of piecewise polynomials and transition maps are used in \cite{mourrain2016dimension} to construct $G^1$-splines for sufficiently large polynomial degree. 
 For this, in \cite{mourrain2016dimension} the authors introduce the concept of a \emph{topological surface}, which consists of a collection of polygons and an identification between pairs of edges of these polygons. 
 The splines on a topological surface are defined as piecewise polynomial functions which are differentiable after composition with {transition maps} associated to pairs of identified edges.
 The analysis of the dimension of $G^1$-splines in \cite{mourrain2016dimension} leads to the construction of basis functions when the topological surface is composed by quadrangular and triangular patches. 
 The construction of topological surfaces was extended to $G^1$-splines on quad meshes with 4-split macro patches in \cite{blidia2017}. 
 A second basis construction for $G^1$-splines on topological surfaces is introduced in \cite{blidia2020} where it is used for fitting of points clouds in isogeometric analysis for the solution of diffusion equations.
 
 In this article, we provide an
 algebraic framework for geometrically continuous $G^r$-splines for any order of continuity $r\geqslant 0$.
 This method extends the results on $G^1$-splines in \cite{mourrain2016dimension}, and generalizes the algebraic approach for (parametric) splines pioneered in \cite{billera1988homology}.
 
 We begin with a characterization of the domains over which the piecewise polynomials are defined. 
 We define them as a couple $(\Delta,\Phi)$ and call them \emph{$G^{r}$-domains}.
 They are composed by a cell complex $\Delta$ with a coordinate system associated to each maximal face, and a collection $\Phi$ of \emph{algebraic transition maps} between adjacent patches. Unlike transition maps in differential geometry, which are diffeomorphisms between open sets, an algebraic transition map is a homomorphism between quotients of polynomial rings. It is a ``stitch" rather than ``gluing". 
 We define $G^{r}$-splines as geometrically continuous piecewise polynomial functions on $G^r$-domains. 
 
 This algebraic setting for $G^r$-splines has several benefits. 
 On one hand, it allows the use of algebraic tools to study the dimension and to construct a basis for $G^r$-spline spaces. 
 Namely, we construct a chain complex of vector spaces whose top homology is isomorphic to the $G^{r}$-spline space. 
 This chain complex allows us to analyze the dimension of $G^{r}$-spline spaces, as we illustrate for some special cases in Section \ref{section:explicit_computations}. 
 More importantly, the algebraic framework enables us to present an algorithm (Algorithm \ref{algorithm:basis_computation_Gr}) to construct a basis for $G^{r}$-spline spaces over any given $G^{r}$-domain.
 Algorithm \ref{algorithm:basis_computation_Gr} is insensitive to the shape of patches in the sense that it works as well on other shapes as it does on triangles and rectangular patches, which makes the choice of domain very flexible.
 On the other hand, we do not lose the perspective gained from the framework in \cite{mourrain2016dimension}. 
 More precisely, we prove in Theorem \ref{prop:relation_notions_geomeric_continuous_functions} that if a $G^{1}$-domain is obtained from a topological surface, then the $G^{1}$-spline space in this paper and the space of differentiable functions on the topological space in \cite{mourrain2016dimension} are the same. 
 Hence, the dimension formulas obtained in \cite{mourrain2016dimension} still apply in the new context.
 As a result, $G^{r}$-splines can be defined and studied in s pure algebraic setting. 
 In particular, the criterion for a function to be $G^{r}$-continuous is algebraic. 
 We formulate concrete examples of algebraic transition maps. 
 As a byproduct of exploring the relation between the concept of geometrically continuous splines in different frameworks, we establish a way of obtaining transition maps from certain kinds of differential manifolds.
 
 This article is organized as follows. 
 In Section \ref{section:definition_construction}, we introduce the algebraic language to define $G^{r}$-splines.
 We define $G^r$-domains and $G^r$-continuity in Section \ref{section_polyhedral_frame}.
 We construct a chain complex whose top homology is isomorphic to the $G^{r}$-spline space in Section \ref{section:generalized_spline_complex}, and consider the bounded degree case of this chain complex in Section \ref{section_BSS_deg_d}. 
 We show that our construction yields the celebrated Billera-Schenck-Stillman complex as a special case in \ref{section_specializeToBSS}.
 The main result in Section \ref{section:the_real_engine} is Lemma \ref{lemma:a_basis_of_Fd}, which does not only 
 leads us to the construction a basis for $G^{r}$-spline spaces over any given $G^{r}$-domain in Algorithm \ref{algorithm:basis_computation_Gr}, but also serves as a general technique in computing generators for certain subspaces of a quotient ring. 
 In Section \ref{Section:Gsplines_by_differential_geometric_method}, we explain how to obtain a $G^{1}$-spline space from a geometrically continuous spline space as defined in \cite{mourrain2016dimension}. 
 The main result in this section is Theorem \ref{prop:relation_notions_geomeric_continuous_functions}, where we prove that these two notions of geometrically continuous splines can be identified. 
 Hence, results in \cite{mourrain2016dimension} and \cite{blidia2017} apply in the new algebraic framework. 
 We describe some interesting families of transition maps in Section \ref{sec:Examples}, including concrete examples of transition maps obtained from manifolds. 
 Section \ref{section:spline_complex_dim2} and \ref{section:explicit_computations} are devoted to the two dimensional case.
 In Section \ref{section:spline_complex_dim2}, we obtain a formula for the Euler characteristic of the chain complex constructed in Section \ref{section_BSS_deg_d} for $G^1$-splines of bounded polynomial degree.
 These computations are also an application of Lemma \ref{lemma:a_basis_of_Fd}.
 We analyze the homology terms of the chain complex in Section \ref{section:explicit_computations}. 
 The formula for the Euler characteristic together with explicit computations of the dimension of the homology terms yield a dimension formula for the dimension of $G^1$-spline spaces defined on two-adjacent patches, or on a star of vertex and continuity conditions given by symmetric gluing data.
 We conclude this section with Example \ref{eg:G1_cube}, where using Algorithm \ref{algorithm:basis_computation_Gr} and \texttt{Macaulay2} \cite{M2}, we compute the exact dimension of a $G^{1}$-spline space defined on the surface of a cube for certain polynomial degrees.
 Finally, we give some concluding remarks in Section \ref{sec:conclusions}. 
 \section{The space of $G^r$--splines}\label{section:definition_construction}
 In this section we introduce $G^r$-continuity and geometrically continuous $G^r$-spline functions. 
 We present a chain complex for $G^r$-splines, which can be seen as a generalization of the Billera-Schenck-Stillman chain complex for (parametric) splines introduced in \cite{billera1988homology} and refined in \cite{LCoho}. 
 First, we introduce the domain of definition of the $G^r$-spline functions which we call $G^r$-domains.
 \subsection{The $G^r$-domains and $G^r$-continuity conditions}\label{section_polyhedral_frame}
 Recall that a cell complex $\Delta$ is \emph{$n$-dimensional}, or \emph{$n$-complex}, if the largest dimension of any face in $\Delta$ is $n$ (see \cite{hatcher} for basics on cell complexes). 
 An $n$-dimensional cell complex is called \emph{pure} if all its maximal faces (with respect to inclusion) are of dimension $n$.
 If $i\geqslant 0$, a face of a cell complex $\Delta$ of dimension $i$ is called an $i$-face.
 We denote by $\Delta_i$ the collection of all $i$-faces of $\Delta$, and say that two $n$-faces $\sigma_{1}$ and $\sigma_{2}$ are \emph{adjacent} if they share an $(n-1)$-face.
 
 Throughout this paper we assume that $\Delta$ is a pure $n$-dimensional cell complex. To each $n$-face $\sigma$ of $\Delta$ we assign a system of coordinates denoted $X(\sigma)$. 
 More precisely, for each $\sigma\in\Delta_n$ we have a map $\psi_{\sigma}\colon \sigma\to X(\sigma)$ such that $\psi_{\sigma}$ is a homeomorphism between $\sigma$ and
 its image $\img{\psi_\sigma}$.
 Since $\dim{\sigma}=n$ then $X(\sigma)\cong \RR^{n}$.
 We refer to $X(\sigma)$ as the coordinates of $\sigma$, and write $\ambRing(\sigma)$ for the corresponding coordinate ring. 
 \begin{definition}
 \label{def:ideals}
 Let $\Delta$ be an $n$-dimensional cell complex with coordinates $X(\sigma)$ for each $n$-face $\sigma\in\Delta$. 
 If $S\subseteq X(\sigma)$, we define $I_{\sigma}(S)$ as the set of all polynomials in $\ambRing(\sigma)$ which vanish at $\psi_\sigma(p)$ for every $p\in S$. Namely, 
 \begin{equation*}
 I_{\sigma}(S)=\bigl\{f\in\ambRing(\sigma)\colon f(\psi_{\sigma}(p))=0~\mbox{for each point}~p\in S\bigr\}.
 \end{equation*}
 \end{definition}
 Notice that $I_{\sigma}(S)$ is an ideal of $\ambRing(\sigma)$ for every $\sigma\in\Delta_n$ and $S\subseteq X(\sigma)$.
 If $\alpha\subseteq\sigma$ is a $k$-face of an $n$-face $\sigma\in\Delta_n$, then $I_{\sigma}(\alpha)\subseteq \ambRing(\sigma)$ is the ideal of all polynomials in $\ambRing(\sigma)$ vanishing at $\psi_\sigma(\alpha)\subseteq X(\sigma)$. 
 In the case, $S=\{p\}$ consists only one point $p\in X(\sigma)$, we simply write $I_{\sigma}(p)$ for the ideal $I_{\sigma}(S)$.
 In particular, if the point $p\in \alpha\subseteq\sigma$ with $\alpha\in\Delta_k$, then $I_\sigma(\alpha)\subseteq I_\sigma(p)$.
 
 Note that we do not require any face $\alpha\subseteq \sigma$ to lie on an algebraic subset of $X(\sigma)$. 
 If $\alpha\subseteq \sigma$ is a $k$-face that lies on an algebraic subset of $X(\sigma)$, then $I_{\sigma}(\alpha)\neq \{0\}$, otherwise $I_{\sigma}(\alpha)=\{0\}$.
 \begin{definition}[Locally algebraic faces] \label{def:localg}
 If $\Delta$ is an $n$-dimensional cell complex with coordinates $X(\sigma)$ for each $n$-face $\sigma\in\Delta$, we say that an $(n-1)$-face $\tau\in\Delta_{n-1}$ is \emph{locally algebraic} if for each $n$-face $\sigma$ containing $\tau$ the ideal $I_{\sigma}(\tau)$ is
 generated by an irreducible polynomial $f\in \ambRing(\sigma)$.
 \end{definition}
 If all the $(n-1)$-faces of $\Delta$ are locally algebraic, 
 and $\alpha\subseteq \sigma\in\Delta_n$ is a $k$-face for $0\leqslant k\leqslant n-2$, then
 \begin{equation*}
 I_{\sigma}(\alpha)=\sum_{\alpha\subseteq\tau\subseteq\sigma}I_{\sigma}(\tau)\,.
 \end{equation*}
 \begin{remark}\label{remark:idealsLocalCoordinates}
 If $\tau=\sigma_1\cap\sigma_2$ is the common $(n-1)$-face of two $n$-faces $\sigma_1$ and $\sigma_2$ of a cell complex $\Delta$, 
 by Definition \ref{def:ideals}, there are two ideals $I_{\sigma_1}(\tau)\subseteq\mathcal{R}(\sigma_1)$ and $I_{\sigma_2}(\tau)\subseteq\mathcal{R}(\sigma_2) $ associated to the face $\tau$, one for each $n$-face containing $\tau$. 
 The subindexes of the ideals indicate the local coordinates at the corresponding $n$-face.
 (These ideals should not be confused with the ideal $\mathcal{I}^r(\tau)$ which we introduce later in Section \ref{section:generalized_spline_complex}. 
 The latter is a unique ideal associated to $\tau$, which depends on both coordinate rings $\mathcal{R}(\sigma_1)$ and $\mathcal{R}(\sigma_1)$ and on the ideals $I_{\sigma_1}(\tau)$ and $I_{\sigma_2}(\tau)$, see Definition \ref{def:ideals2}.)
 \end{remark}
 \begin{definition}[$G^r$-algebraic transition maps]\label{def:algebraic_transition_maps}
 Let $r\geqslant 0$ be and integer, and take two $n$-faces $\sigma_{1}$ and $\sigma_{2}$ of a cell complex $\Delta$ such that $\sigma_{1}\cap \sigma_{2}=\alpha\neq \emptyset$. 
 An $\RR$-algebra homomorphism
 \begin{equation}\label{eqn:def_algebraic_transition_maps}
 \phi_{12}\colon \ambRing(\sigma_{1})/I_{\sigma_{1}}(\alpha)^{r+1}\to \ambRing(\sigma_{2})/I_{\sigma_{2}}(\alpha)^{r+1}
 \end{equation} 
 is called a $G^{r}$-\emph{algebraic transition map} from $\sigma_{1}$ to $\sigma_{2}$, if 
 \begin{equation*}
 \phi_{12}^{-1}\bigl(I_{\sigma_{2}}(p)\bigr)=I_{\sigma_{1}}(p),~\mbox{for each point}~p\in\alpha.
 \end{equation*}
 \end{definition}
 
 \begin{definition}[Compatibility conditions]\label{def:compatibility_conditions_alg}
 Let $\Delta$ be a pure $n$-dimensional cell complex. A collection $\Phi$ of $G^{r}$-algebraic transition maps is \emph{compatible} on $\Delta$ if the following conditions hold:
 \begin{enumerate}[(C1)]
 \item\label{cond1} For each $n$-face $\sigma_i\in\Delta_n$, the only algebraic transition map $\phi_{ii}$ in $\Phi$ from $\sigma_{i}$ to itself is the identity map on $\ambRing(\sigma_{i})$. 
 \item \label{cond2} If the intersection of two $n$-faces $\sigma_1,\sigma_2\in\Delta_n$ is a non-empty face $\alpha=\sigma_{1}\cap\sigma_{2}$, then either $\phi_{12}$ or $\phi_{21}$ is in $\Phi$. If both of them are in $\Phi$, then $\phi_{12}=\phi_{21}^{-1}$. If $\phi_{12},\phi_{12}'\in\Phi$ are transition maps from $\sigma_{1}$ to $\sigma_{2}$, then $\phi_{12}=\phi_{12}'$. 
 \item\label{cond3} For any triple of $n$-faces $\sigma_{1},\sigma_{2},\sigma_{3}\in\Delta_n$, if their intersection is a non-empty face $\beta=\sigma_{1}\cap\sigma_{2}\cap\sigma_{3}$, and $\phi_{12},\phi_{23}\in\Phi$, then $\phi_{13}\in\Phi$ and each of the three maps has a natural restriction
 \begin{equation*}
 \overline{\phi_{ij}}\colon\ambRing(\sigma_{i})/I_{\sigma_{i}}(\beta)^{r+1}\to\ambRing(\sigma_{j})/I_{\sigma_{j}}(\beta)^{r+1},
 \end{equation*}
 where $(i,j)=(1,2),(2,3),(1,3)$ such that $\overline{\phi_{23}}\circ\overline{\phi_{12}}=\overline{\phi_{13}}$.
 \end{enumerate}
 \end{definition}
 We now introduce the domains where the geometrically continuous splines are defined. 
 Recall that an $n$-dimensional \emph{manifold with boundary} is a second-countable Hausdorff space in which every point has neighborhood homeomorphic either to an open subset of $\RR^{n}$ or to an open subset of the closed $n$-dimensional upper half-space \cite{lee2012smooth}.
 \begin{definition}[$G^{r}$-domains]\label{def:Grdomain}
 A \emph{$G^{r}$-domain} $(\GrDomain)$ is an $n$-dimensional cell complex $\Delta$ which is homeomorphic to an $n$-dimensional manifold with boundary, together with a compatible collection of $G^r$-algebraic transition maps $\Phi$. 
 \end{definition}
 In Section \ref{sec:Examples} we collect several examples of $G^r$-domains.
 \begin{remark}\label{rem:PhiBasis}Notice that in a $G^{r}$-domain $(\GrDomain)$ in Definition \ref{def:Grdomain}, the collection of transition maps $\Phi$ satisfies the compatibility conditions \ref{cond1}--\ref{cond3}, hence $\Phi$ is determined by the transition maps for each pair of adjacent $n$-faces of $\Delta$. 
 In the examples below we describe $\Phi$ by giving this subset of transition maps.
 \end{remark}
 
 Next, we construct the space of $G^r$-spline functions, which will be piecewise polynomial functions over $G^{r}$-domains that satisfy certain continuity conditions.
 
 We restrict the definition of $G^r$-splines to $G^r$-domains. Hereafter, they are the only domains we consider in this paper.
 
 \begin{definition}[$G^{r}$-continuity along a face]\label{def:Gr_join_algebraic}
 Let $\Delta$ an $n$-dimensional cell complex, and $\sigma_{1},\sigma_{2}\in \Delta_n$ two $n$-faces such that $\alpha=\sigma_1\cap\sigma_2$ is a non-empty face of $\Delta$. 
 Suppose $\phi_{12}$ is a $G^{r}$-algebraic transition map from $\sigma_1$ to $\sigma_2$ (as in Definition \ref{def:algebraic_transition_maps}).
 We say that $f\in\ambRing(\sigma_{1})$ and $g\in\ambRing(\sigma_{2})$ \emph{join with $G^{r}$-continuity along $\alpha$ with respect to $\phi_{12}$} if 
 \begin{equation*}
 \phi_{12}\bigl(\bar{f}\bigr)=\bar{g}, 
 \end{equation*}
 where $\bar{f}=f+I_{\sigma_{1}}(\alpha)^{r+1}$ and $\bar{g}=g+I_{\sigma_{2}}(\alpha)^{r+1}$ are the images of $f$ and $g$ in the quotient rings $\ambRing(\sigma_{1})/I_{\sigma_{1}}(\alpha)^{r+1}$ and $\ambRing(\sigma_{2})/I_{\sigma_{2}}(\alpha)^{r+1}$, respectively.
 \end{definition}
 
 \begin{definition}[$G^{r}$-splines]\label{def_Gr_spline}
 Let $(\GrDomain)$ be a $G^{r}$-domain for some integer $r\geqslant 0$. 
 The space $G^r(\GrDomain)$ of $G^{r}$-splines on $(\GrDomain)$ is the set of all piecewise polynomial functions $f=(f_\sigma)_{\sigma\in\Delta_n}\in \bigoplus_{\sigma\in\Delta_n}\mathcal{R}(\sigma)$ 
 such that $f_{\sigma_i}$ and $f_{\sigma_j}$ join along the $(n-1)$-face $\tau=\sigma_{i}\cap\sigma_{j}$ with $G^{r}$-continuity with respect to $\Phi$ for every pair of adjacent $n$-faces $\sigma_{i},\sigma_{j}\in \Delta_n$.
 \end{definition}
 \begin{remark}
 The notion of geometrically continuous splines was originally defined within differential geometry framework. The relation between this notion in these two frameworks is explained in Section \ref{Section:Gsplines_by_differential_geometric_method}.
 \end{remark}
 
 \subsection{Chain complex for $G^r$-splines}\label{section:generalized_spline_complex}
 In this section we assume that $\Delta$ is an $n$-dimensional polyhedral complex. We write $\mcC_{\bullet}(\Delta,\partial\Delta)=\mcC_{\bullet}(\Delta)/\mcC_{\bullet}(\partial\Delta)$ for the chain complex of $\Delta$ relative to its boundary $\partial\Delta$. 
 By an abuse of notation, we denote the $k$-faces of $\Delta$ relative to its boundary by $(\Delta,\partial\Delta)_{n}=\Delta_{n}$ and $(\Delta,\partial\Delta)_{k}=\Delta_{k}^{\circ}$, for $0\leqslant k\leqslant n-1$. 
 We call the faces in $\Delta_k^\circ$ the \emph{interior} $k$-faces of $\Delta$.
 Recall that to each $n$-face $\sigma_i\in\Delta_n$, we associate a system of coordinates $X(\sigma_i)\cong \RR^n$ and denote by $\ambRing(\sigma_i)$ the polynomial ring in the coordinates 
 $(u_{i1},\dots,u_{in})$ of $X(\sigma_i)$. 
 In the following we extend this construction and associate a ring to each face of the polyhedral complex $\Delta$.
 
 If $0\leqslant k\leqslant n-1$, and $\alpha\in\Delta_k$ is a $k$-face, we take $\ambRing(\alpha)$ as the tensor product 
 \begin{equation}\label{eq:ringalpha}
 \ambRing(\alpha)=\bigotimes_{\substack{\sigma\supseteq\alpha\\\sigma\in\Delta_{n}}}\ambRing(\sigma).
 \end{equation}
 For simplicity, we write $\ambRing(\sigma_{1},\dots,\sigma_{m})=\ambRing(\sigma_{1})\otimes\dots\otimes\ambRing(\sigma_{m})$, for $n$-faces $\sigma_i\in\Delta_n$.
 
 \begin{definition}\label{def:ideals2}
 If $\sigma_{1},\sigma_{2}\in\Delta_n$ are two adjacent $n$-faces such that $\sigma_1\cap\sigma_2=\tau\in\Delta_{n-1}$ and $\phi_{12}\in\Phi$ is a algebraic transition map from $\sigma_1$ to $\sigma_2$, we define $\idealComplex(\tau)\subseteq\ambRing(\tau)$ as the ideal 
 \begin{equation}\label{eq:idealtau}
 \idealComplex(\tau)=\sum_{i=1}^{n}\bigl\langle u_{1i}-\widetilde{\phi}_{12}(u_{1i}) \bigr\rangle+
 \left(I_{\sigma_{1}}(\tau)^{r+1}\cdot\ambRing(\tau)+I_{\sigma_{2}}(\tau)^{r+1}\cdot\ambRing(\tau)\right),
 \end{equation}
 where $\widetilde{\phi}_{12}$ is a lift of $\phi_{12}$ in \eqref{eqn:def_algebraic_transition_maps}. 
 If $\alpha\in\Delta_{k}$ for $0\leqslant k\leqslant n-2$, we define
 \begin{equation}\label{eq:idealalpha}
 \idealComplex(\alpha)=
 \sum_{\substack{\tau\supseteq\alpha\\\tau\in\Delta_{n-1}}}\idealComplex(\tau)\cdot\ambRing(\alpha).
 \end{equation}
 For $0\leqslant k\leqslant n-1$, and $\alpha\in\Delta_k$, we put
 \begin{equation}\label{eq:complexF}
 \splineRing(\alpha)=\ambRing(\alpha)/\idealComplex(\alpha).
 \end{equation}
 For $\sigma\in\Delta_n$, put $\splineRing(\sigma)=\ambRing(\sigma)$.
 \end{definition}
 \begin{remark}
 Notice that for an $(n-1)$-face $\tau=\sigma_1\cap\sigma_2\in\Delta_{n-1}$, the ideal $\idealComplex(\tau)$ in Definition \ref{def:ideals2} does not depend on the choice of the lift of the algebraic transition map $\phi_{12}$.
 \end{remark}
 For each pair $(\alpha,\beta)\in\Delta_{k}\times\Delta_{k-1}$ such that $\beta\subseteq\alpha$, we have the inclusion
 \begin{equation}\label{eq:alghomomorphishm}
 \rho^{\alpha}_{\beta}\colon \ambRing(\alpha)\to\ambRing(\beta)\,,
 \end{equation}
 for every $k=1, \dots, n$.
 Suppose the $k$-th boundary map of the chain complex $\mcC_{\bullet}(\Delta,\partial\Delta)$ is given by 
 \begin{equation}\label{eq:def_bdry_map}
 \partial_{k}=\sum_{(\alpha,\beta)\in(\Delta,\partial\Delta)_{k}\times(\Delta,\partial\Delta)_{k-1}}a_{\alpha,\beta}e_{\beta}\otimes e_{\alpha}^{*},
 \end{equation}
 where $\{e_{\alpha}\}$ and $\{e_{\beta}\}$ are generators of $\mcC_{k}(\Delta,\partial\Delta)$ and $\mcC_{k-1}(\Delta,\partial\Delta)$, respectively, and $a_{\alpha,\beta}\in\ZZ$. 
 We denote by $\delta_{k}$ the $\RR$-linear map
 \begin{equation}\label{eq:delta}
 \delta_{k}\colon \bigoplus_{\alpha\in(\Delta,\partial\Delta)_{k}}\ambRing(\alpha)\to\bigoplus_{\beta\in(\Delta,\partial\Delta)_{k-1}}\ambRing(\beta)
 \end{equation}
 defined as
 \begin{equation*}
 \delta_{k}=\sum_{(\alpha,\beta)\in(\Delta,\partial\Delta)_{k}\times(\Delta,\partial\Delta)_{k-1}}a_{\alpha,\beta}\rho^{\alpha}_{\beta} e_{\beta}\otimes e_{\alpha}^{*}.
 \end{equation*}
 We first check that $(\ambRing_{\bullet},\delta_{\bullet})$ is a chain complex.
 \begin{lemma}\label{lemma_chaincomplex_condition}
 If $\Delta$ is an $n$-dimensional polyhedral complex, and $\delta_k$ is the linear map in \eqref{eq:delta}, then
 \begin{equation*}
 \delta_{k+1}\circ\delta_{k+2}=0,
 \end{equation*}
 for $k=0,\dots,n-2$, and hence $(\ambRing_{\bullet},\delta_{\bullet})$ is a chain complex of $\RR$-vector spaces.
 \end{lemma}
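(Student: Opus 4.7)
The plan is to prove $\delta_{k+1} \circ \delta_{k+2} = 0$ by reducing it to the familiar identity $\partial_{k+1} \circ \partial_{k+2} = 0$ for the relative cellular chain complex $\mathcal{C}_\bullet(\Delta, \partial\Delta)$. The operator $\delta_k$ is essentially the integer boundary map $\partial_k$ from \eqref{eq:def_bdry_map} with each incidence coefficient $a_{\alpha,\beta}$ ``twisted'' by the canonical ring homomorphism $\rho^\alpha_\beta$ of \eqref{eq:alghomomorphishm}. The core of the argument is therefore to show that these twists are compatible with successive face restrictions, so that the ring-theoretic data factors out of the composition and only the combinatorial condition $\partial^2 = 0$ remains.

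First I would establish the following compatibility lemma: for any chain of faces $\beta \subseteq \alpha \subseteq \gamma$ with consecutive dimensions, the composition $\rho^\alpha_\beta \circ \rho^\gamma_\alpha$ equals the canonical inclusion $\mathcal{R}(\gamma) \hookrightarrow \mathcal{R}(\beta)$ defined by the same tensor-product recipe (jumping two dimensions). This is immediate from \eqref{eq:ringalpha}: each ring $\mathcal{R}(\mu)$ is the tensor product of $\mathcal{R}(\sigma)$ over all $n$-faces $\sigma \supseteq \mu$, and shrinking $\mu$ can only enlarge this indexing set. Each $\rho$ simply adjoins the tensor factors $\mathcal{R}(\sigma)$ corresponding to the extra $n$-faces containing the smaller face, and these inclusions compose transparently.

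Next I would expand the composition using \eqref{eq:delta}:
\begin{equation*}
\delta_{k+1}\circ\delta_{k+2}
 = \sum_{\substack{\gamma\in(\Delta,\partial\Delta)_{k+2}\\\beta\in(\Delta,\partial\Delta)_k}}
 \Bigl(\sum_{\alpha\in(\Delta,\partial\Delta)_{k+1}} a_{\gamma,\alpha}\,a_{\alpha,\beta}\,
 \rho^\alpha_\beta\circ\rho^\gamma_\alpha\Bigr)\,e_\beta\otimes e_\gamma^*.
\end{equation*}
Only interior $(k+1)$-faces $\alpha$ with $\beta \subseteq \alpha \subseteq \gamma$ can contribute nontrivially, since otherwise one of the incidence coefficients $a_{\gamma,\alpha}$ or $a_{\alpha,\beta}$ vanishes. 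Applying the compatibility lemma replaces each nonzero composition $\rho^\alpha_\beta \circ \rho^\gamma_\alpha$ with the single map $\rho^\gamma_\beta$, which then pulls out of the inner sum.

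Finally, the remaining scalar $\sum_\alpha a_{\gamma,\alpha} a_{\alpha,\beta}$ is precisely the coefficient of $e_\beta$ in $\partial_{k+1}\partial_{k+2}(e_\gamma)$ inside $\mathcal{C}_\bullet(\Delta,\partial\Delta)$, which vanishes because the relative cellular chain complex satisfies $\partial^2 = 0$. I expect the main obstacle to be essentially bookkeeping: unpacking \eqref{eq:ringalpha} carefully enough that the equality $\rho^\alpha_\beta \circ \rho^\gamma_\alpha = \rho^\gamma_\beta$ is manifest, and verifying that restricting the sum over $\alpha$ to interior $(k+1)$-faces is legitimate (which is automatic because we work throughout in the quotient complex relative to $\partial\Delta$). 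Beyond this, no substantive algebraic content is required past $\partial^2=0$.
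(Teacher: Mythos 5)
Your proposal is correct and follows essentially the same route as the paper: both arguments rest on the observation that the composed restriction $\rho^{\alpha}_{\beta}\circ\rho^{\gamma}_{\alpha}$ is independent of the intermediate $(k+1)$-face (being the canonical inclusion $\ambRing(\gamma)\hookrightarrow\ambRing(\beta)$ coming from the tensor-product definition \eqref{eq:ringalpha}), so the ring map factors out of the inner sum and the vanishing reduces to $\partial_{k+1}\circ\partial_{k+2}=0$ in $\mcC_{\bullet}(\Delta,\partial\Delta)$. The only cosmetic difference is that the paper states the key fact as equality of the compositions through two different intermediate faces, while you identify them with the single two-step canonical inclusion; these are the same observation.
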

 \begin{proof}
 Note that for faces $\gamma\in\Delta_{k}$ and $\alpha\in\Delta_{k+2}$, if there are two faces $\beta,\beta'\in\Delta_{k+1}$ such that $\gamma\subseteq\beta\subseteq\alpha$ and $\gamma\subseteq\beta'\subseteq\alpha$, then
 \begin{equation*}
 \rho^{\beta}_{\gamma}\circ\rho^{\alpha}_{\beta}=\rho^{\beta'}_{\gamma}\circ\rho^{\alpha}_{\beta'},
 \end{equation*}
 for the homomorphisms in \eqref{eq:alghomomorphishm}.
 Furthermore, for any $k=0,\dots,n-2$ we have
 \begin{equation*}
 \partial_{k+1}\circ\partial_{k+2}=0.
 \end{equation*}
 Equivalently, this means if $\partial_{k+2}=\sum a_{\alpha,\beta}e_{\alpha}\otimes e_{\beta}^{*}$ and $\partial_{k+1}=\sum b_{\beta,\lambda}e_{\beta}\otimes e_{\lambda}^{*}$, then for faces $\alpha$ and $\lambda$, such that $\gamma\subseteq\alpha$ we have
 \begin{equation*}
 \sum_{\alpha\subseteq\beta\subseteq\gamma}a_{\alpha,\beta}b_{\beta,\lambda}=0.
 \end{equation*}
 Therefore,
 \begin{equation*}
 \bigl(\delta_{k+1}\circ\delta_{k+2}\bigr)_{\alpha,\lambda}
 =
 \sum_{\alpha\subseteq\beta\subseteq\gamma}a_{\alpha,\beta}b_{\beta,\lambda}\rho^{\beta}_{\gamma}\circ\rho^{\alpha}_{\beta}
 =
 \Biggl(\sum_{\alpha\subseteq\beta\subseteq\gamma}a_{\alpha,\beta}b_{\beta,\lambda}\Biggr)\cdot\rho^{\beta'}_{\gamma}\circ\rho^{\alpha}_{\beta'}=0\,,
 \end{equation*}
 where $\beta'$ is a $(k+1)$-face between $\alpha$ and $\lambda$. 
 Since 
 \begin{equation*}
 \delta_{k+1}\circ\delta_{k+2}=
 \sum_{(\alpha,\lambda)\in(\Delta,\partial\Delta)_{k+2}\times(\Delta,\partial\Delta)_{k}}(\delta_{k+1}\circ\delta_{k+2})_{\alpha,\lambda} e_{\alpha}\otimes e_{\lambda}^{*},
 \end{equation*}
 then $\delta_{k+1}\circ\delta_{k+2}=0$.
 \end{proof}
 Note that the restriction of $\rho^{\alpha}_{\beta}$ in \eqref{eq:alghomomorphishm} to $\idealComplex(\alpha)$ gives us a map $\idealComplex(\alpha)\to\idealComplex(\beta)$. Therefore, $\delta_{k}$ restricts to $\oplus_{\alpha\in(\Delta,\partial\Delta)_{k}}\idealComplex(\alpha)\to\oplus_{\beta\in(\Delta,\partial\Delta)_{k-1}}\idealComplex(\beta)$ and it induces a natural map $\oplus_{\alpha\in(\Delta,\partial\Delta)_{k}}\splineRing(\alpha)\to\oplus_{\beta\in(\Delta,\partial\Delta)_{k-1}}\splineRing(\beta)$. 
 Thus, Lemma \ref{lemma_chaincomplex_condition} implies that both $(\idealComplex_{\bullet},\delta_{\bullet})$ and $(\splineRing_{\bullet},\delta_{\bullet})$ are chain complexes, and we have the following immediate consequence of our construction.
 \begin{theorem}\label{Prop_Gr_Hn} 
 The space $G^{r}(\GrDomain)$ of $G^{r}$-splines on a $G^r$-domain $(\GrDomain)$ is the top homology $H_{n}(\splineRing_{\bullet})$ of the chain complex $(\splineRing_{\bullet},\delta_{\bullet})$.
 \end{theorem}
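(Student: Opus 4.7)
The plan is to observe that since the chain complex $(\splineRing_{\bullet},\delta_{\bullet})$ vanishes in degrees above $n$, its top homology is simply $H_{n}(\splineRing_{\bullet})=\ker\bigl(\delta_{n}\colon \bigoplus_{\sigma\in\Delta_{n}}\splineRing(\sigma)\to\bigoplus_{\tau\in\Delta_{n-1}^{\circ}}\splineRing(\tau)\bigr)$. Since $\splineRing(\sigma)=\ambRing(\sigma)$ for every $n$-face $\sigma$, the ambient module $\bigoplus_{\sigma\in\Delta_{n}}\splineRing(\sigma)$ coincides with the space of all piecewise polynomials in Definition \ref{def_Gr_spline}. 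The goal is therefore to show that the kernel of $\delta_{n}$ records exactly the $G^{r}$-continuity conditions imposed in Definition \ref{def_Gr_spline}, and then appeal to the universal property of direct sums to conclude.

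The key step is a local identification of $\splineRing(\tau)$ for an interior $(n-1)$-face $\tau=\sigma_{1}\cap\sigma_{2}$. By \eqref{eq:ringalpha} we have $\ambRing(\tau)=\ambRing(\sigma_{1})\otimes\ambRing(\sigma_{2})$, and the ideal $\idealComplex(\tau)$ in \eqref{eq:idealtau} identifies $u_{1i}$ with the lifted image $\widetilde{\phi}_{12}(u_{1i})\in\ambRing(\sigma_{2})$ while quotienting by $I_{\sigma_{1}}(\tau)^{r+1}\cdot\ambRing(\tau)+I_{\sigma_{2}}(\tau)^{r+1}\cdot\ambRing(\tau)$. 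I would define a map
\begin{equation*}
\pi\colon \ambRing(\sigma_{1})\otimes\ambRing(\sigma_{2})\to \ambRing(\sigma_{2})/I_{\sigma_{2}}(\tau)^{r+1},\qquad f\otimes g\mapsto \phi_{12}(\bar{f})\cdot\bar{g},
\end{equation*}
which is well-defined because $\phi_{12}$ is a homomorphism modulo $I_{\sigma_{1}}(\tau)^{r+1}$, and check that $\ker\pi=\idealComplex(\tau)$. This yields a canonical isomorphism $\splineRing(\tau)\cong\ambRing(\sigma_{2})/I_{\sigma_{2}}(\tau)^{r+1}$ (independent of the lift used to define $\idealComplex(\tau)$, as noted in the remark following Definition \ref{def:ideals2}). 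Under this isomorphism, the component of $\delta_{n}$ indexed by $(\sigma,\tau)$ sends $f_{\sigma_{1}}$ to $\phi_{12}(\bar{f}_{\sigma_{1}})$ and $f_{\sigma_{2}}$ to $\bar{f}_{\sigma_{2}}$, because $\rho^{\sigma_{i}}_{\tau}$ is the natural inclusion of $\ambRing(\sigma_{i})$ into $\ambRing(\tau)=\ambRing(\sigma_{1})\otimes\ambRing(\sigma_{2})$ followed by the projection to $\splineRing(\tau)$.

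Combining these identifications, the $\tau$-component of $\delta_{n}\bigl((f_{\sigma})_{\sigma}\bigr)$ equals $\pm\bigl(\phi_{12}(\bar{f}_{\sigma_{1}})-\bar{f}_{\sigma_{2}}\bigr)\in\ambRing(\sigma_{2})/I_{\sigma_{2}}(\tau)^{r+1}$. Hence $(f_{\sigma})_{\sigma}\in\ker\delta_{n}$ if and only if $\phi_{12}(\bar{f}_{\sigma_{1}})=\bar{f}_{\sigma_{2}}$ for every interior $(n-1)$-face $\tau=\sigma_{1}\cap\sigma_{2}$, which by Definition \ref{def:Gr_join_algebraic} is precisely the requirement that $f_{\sigma_{1}}$ and $f_{\sigma_{2}}$ join with $G^{r}$-continuity along $\tau$. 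Using Remark \ref{rem:PhiBasis}, the full compatible collection $\Phi$ is determined by the transition maps across adjacent pairs, so these are all the conditions needed, and the kernel of $\delta_{n}$ coincides with $G^{r}(\GrDomain)$.

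The main obstacle I expect is verifying the isomorphism $\splineRing(\tau)\cong\ambRing(\sigma_{2})/I_{\sigma_{2}}(\tau)^{r+1}$, which requires showing $\ker\pi=\idealComplex(\tau)$ cleanly, together with the bookkeeping of signs $a_{\sigma,\tau}$ from \eqref{eq:def_bdry_map} (these are irrelevant for detecting the kernel since $\phi_{12}(\bar{f}_{\sigma_{1}})=\bar{f}_{\sigma_{2}}$ iff $-\phi_{12}(\bar{f}_{\sigma_{1}})+\bar{f}_{\sigma_{2}}=0$). A small subtlety worth commenting on is that only interior $(n-1)$-faces contribute to the kernel, matching the fact that boundary faces carry no continuity constraints; this is built into the use of the relative complex $\mathcal{C}_{\bullet}(\Delta,\partial\Delta)$.
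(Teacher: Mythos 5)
Your proposal is correct, and it reaches the theorem by a somewhat different organization than the paper. The paper argues element-wise: using the graph-ideal equivalence \eqref{eqn:equivalent_conditions_by_graph} and a lift-chasing argument it shows directly that $\phi_{ij}(\bar f_{\sigma_i})=\bar f_{\sigma_j}$ holds if and only if $f_{\sigma_i}-f_{\sigma_j}\in\idealComplex(\tau)$, and then identifies $\ker\delta_n$ with $G^r(\GrDomain)$. You instead compute the edge term of the complex itself: your map $\pi$ factors as the substitution homomorphism $u_{1i}\mapsto\widetilde{\phi}_{12}(u_{1i})$, $u_{2j}\mapsto u_{2j}$ followed by reduction modulo $I_{\sigma_2}(\tau)^{r+1}$, from which $\ker\pi=\idealComplex(\tau)$ follows and hence $\splineRing(\tau)\cong\ambRing(\sigma_2)/I_{\sigma_2}(\tau)^{r+1}$; under this identification the $\tau$-component of $\delta_n$ is $\pm\bigl(\phi_{12}(\bar f_{\sigma_1})-\bar f_{\sigma_2}\bigr)$ and the kernel is $G^r(\GrDomain)$ by Definition \ref{def:Gr_join_algebraic}. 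Both proofs rest on the same underlying fact (membership in the graph ideal encodes substitution along the transition map), but your packaging avoids the existence-of-liftings bookkeeping in the paper's two implications and yields, as a by-product, an explicit description of $\splineRing(\tau)$ as the ring of $r$-jets along $\tau$ in the coordinates of $\sigma_2$, which is a useful structural statement in its own right. One small caveat: your remark that the incidence signs $a_{\sigma,\tau}$ are irrelevant tacitly assumes the two incidence numbers of an interior $(n-1)$-face are opposite, so that the component is a difference rather than a sum; this is the same tacit (and harmless, under coherent orientation) assumption the paper makes when it writes $f_{\sigma_i}-f_{\sigma_j}$, so it is not a gap relative to the published argument.
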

 \begin{proof}
 By definition, a tuple of polynomials $(f_{\sigma})_{\sigma\in\Delta_{n}}\in G^{r}(\GrDomain)$ if 
 \begin{equation}\label{eqn:Gr_join_phiij}
 \phi_{ij}(\bar{f}_{\sigma_{i}})=\bar{f}_{\sigma_{j}},
 \end{equation}
 for each pair of adjacent $n$-faces $\sigma_{i},\sigma_{j}\in\Delta_n$. 
 Let $\tau=\sigma_{i}\cap\sigma_{j}\in\Delta_{n-1}$. 
 We only need to show that \eqref{eqn:Gr_join_phiij} holds if and only if
 \begin{equation*}
 f_{\sigma_{i}}-f_{\sigma_{j}}\in\idealComplex(\tau).
 \end{equation*}
 We know that 
 \begin{equation}\label{eqn:equivalent_conditions_by_graph}
 \widetilde{\phi}_{ij}(g_{i})=g_{j}~\Leftrightarrow~g_{i}-g_{j}\in\left\langle u_{ik}-\widetilde{\phi}_{ij}(u_{ik})\colon k=1,\dots,n\right\rangle,
 \end{equation}
 for any pair of polynomials $(g_{i},g_{j})\in\ambRing(\sigma_{i})\times\ambRing(\sigma_{j})$, where $\widetilde{\phi}_{ij}$ is a lifting of $\phi_{ij}$.
 For $l=i,j$, denote by $\bar{f}_{\sigma_{l}}$ be the quotient image of $f_{\sigma_{l}}$ in $\ambRing(\sigma_{l})/I_{\sigma_{l}}(\tau)^{r+1}$. If \eqref{eqn:Gr_join_phiij} holds, then there exists $\widetilde{f}_{i}$ and $\widetilde{f}_{j}$ which are liftings of $\bar{f}_{\sigma_{i}}$ and $\bar{f}_{\sigma_{j}}$, respectively, such that $\widetilde{\phi}_{ij}(\widetilde{f}_{i})=\widetilde{f}_{j}$. Hence, \eqref{eqn:equivalent_conditions_by_graph} implies $\widetilde{f}_{i}-\widetilde{f}_{j}\in \left\langle u_{ik}-\widetilde{\phi}_{ij}(u_{ik})\colon k=1,\dots,n\right\rangle$. 
 Since $\widetilde{f}_{i}$ is a lifting of $\bar{f}_{\sigma_{i}}$, then $\widetilde{f}_{i}-f_{\sigma_{i}}\in I_{\sigma_{i}}(\tau)^{r+1}$. The same applies for $\widetilde{f}_{j}$ and $f_{\sigma_{j}}$, hence $\widetilde{f}_{j}-f_{\sigma_{j}}\in I_{\sigma_{j}}(\tau)^{r+1}$. 
 Putting these together, we have $f_{\sigma_{i}}-f_{\sigma_{j}}\in\idealComplex(\tau)$.
 
 Conversely, if $f_{\sigma_{i}}-f_{\sigma_{j}}\in\idealComplex(\tau)$, then there exist $\widetilde{f}_{i}$, and $\widetilde{f}_{j}$ such that 
 $f_{\sigma_{i}}-\widetilde{f}_{i}\in I_{\sigma_{i}}(\tau)^{r+1}$, $f_{\sigma_{j}}-\widetilde{f}_{j}\in I_{\sigma_{j}}(\tau)^{r+1}$, and $\widetilde{f}_{i}-\widetilde{f}_{j}\in \left\langle u_{ik}-\widetilde{\phi}_{ij}(u_{ik})\colon k=1,\dots,n\right\rangle$. Again, by \eqref{eqn:equivalent_conditions_by_graph}, we have $\widetilde{\phi}_{ij}(\widetilde{f}_{i})=\widetilde{f}_{j}$, which implies \eqref{eqn:Gr_join_phiij}.
 \end{proof}
 The chain complex $(\splineRing_{\bullet},\delta_{\bullet})$ specializes to the Billera-Schenck-Stillman spline chain complex when all transition maps are identities. 
 This is a technical result which for completeness we have included in \ref{section_specializeToBSS}. 
 \subsection{Chain complex for $G^r$-splines of bounded polynomial degree}\label{section_BSS_deg_d}
 In the following we consider the subspace of $G^r$-splines over a given $G^r$-domain $(\GrDomain)$ which are of degree less than or equal to a fixed $d\geqslant 0$. 
 For each $d$, we associate a chain complex to the subspace of $G^{r}(\GrDomain)$ of splines of degree at most $d$ and relate it to the chain complex $(\splineRing_{\bullet},\delta_{\bullet})$ defined in Section \ref{section:generalized_spline_complex}.
 \begin{definition}
 Let $d\geqslant 0$ be an integer. 
 The set of $G^r$-spline of degree at most $d$ over a $G^r$-domain $(\GrDomain)$, denoted 
 $G^{r}_{d}(\GrDomain)$, is the subset of splines $f\in G^{r}(\GrDomain)$ whose restriction to each $n$-face is a polynomial of degree less than or equal to $d$. Namely,
 \begin{equation*}
 G^{r}_{d}(\GrDomain)=\bigl\{f\in G^{r}(\GrDomain)\colon\deg(f_{\sigma})\leqslant d~\mbox{for every}~\sigma\in\Delta_{n}\bigr\},
 \end{equation*}
 where $f=(f_\sigma)_{\sigma\in\Delta_n}$, and $f|_\sigma=f_\sigma$.
 \end{definition}
 It is clear that $G^{r}_{d}(\GrDomain)$ is a finite dimensional $\RR$-vector space. 
 To find the dimension of this vector space, we construct a chain complex of finite dimensional vector spaces as follows.
 
 If $\sigma\in\Delta_{n}$, denote by $\ambRing_{d}(\sigma)=\ambRing(\sigma)_{\leqslant d}$ the vector space of polynomials of degree at most $d$ in the coordinates $X(\sigma)\cong \RR^n$. 
 For a $k$-face $\alpha\in\Delta_k$, for $0\leqslant k\leqslant n$, let 
 \begin{equation}\label{eq:ringT}
 \totalSpace_{d}(\alpha)= \Biggl\{ \sum_{\alpha\subseteq\sigma\in\Delta_{n}} f_\sigma \colon f_\sigma\in \ambRing_d(\sigma)\Biggr\}
 \end{equation} 
 be the image of the inclusion
 \begin{equation*}
 \bigoplus_{i=1}^{s}\ambRing_{d}(\sigma_{i})
 \xhookrightarrow{\phantom{in}}
 \ambRing(\sigma_{1},\dots,\sigma_{s})=\ambRing(\alpha),
 \end{equation*}
 where $\sigma_{1},\dots,\sigma_{s}\in\Delta_n$ are the $n$-faces in $\Delta$ containing $\alpha$. 
 Notice that the set $\totalSpace_d(\alpha)$ is a subspace of the tensor product polynomial ring $\ambRing(\alpha)$ defined in \eqref{eq:ringalpha}. 
 The polynomials in $\totalSpace_d(\alpha)$ do not contain mixed terms i.e., each term is a monomial in one of the rings $\ambRing(\sigma)$ for some $n$-face $\sigma\supseteq \alpha$. 
 If $\sigma$ is an $n$-face, then $\totalSpace_{d}(\sigma)=\ambRing_{d}(\sigma)$. 
 
 Additionally, for each $\alpha\in\Delta_k$, for $0\leqslant k\leqslant n-1$, we define
 \begin{equation}\label{eq:idealJ}
 \subSpace_{d}(\alpha)=\totalSpace_{d}(\alpha)\cap\idealComplex(\alpha),
 \end{equation}
 and 
 \begin{equation}\label{eq:quotientQ}
 \quotientSpace_{d}(\alpha)=\totalSpace_{d}(\alpha)/\subSpace_{d}(\alpha),
 \end{equation}
 where $\idealComplex(\alpha)$ is the ideal defined in \eqref{eq:idealtau} and \eqref{eq:idealalpha} for $k=n-1$ and $0\leqslant k\leqslant n-2$, respectively. If $\sigma\in\Delta_n$, we take $\quotientSpace_d(\sigma)=\totalSpace_d(\sigma)$.
 
 Since the differential map $\delta$ in the chain complex $\bigl(\ambRing_{\bullet},\delta\bigr)$ restricts to
 \begin{equation*}
 \delta_{k}\colon\bigoplus_{\alpha\in\Delta_{k}}\totalSpace_{d}(\alpha)\to\bigoplus_{\beta\in\Delta_{k-1}}\totalSpace_{d}(\beta),
 \end{equation*}
 then $(\totalSpace_{d,\bullet},\delta)$ is a chain complex, and hence so are $(\subSpace_{d,\bullet},\delta)$ and $(\quotientSpace_{d,\bullet},\delta)$. 
 
 As a corollary of Theorem~\ref{Prop_Gr_Hn}, we have the following proposition. 
 \begin{proposition}\label{prop:Grd_is_top_homology_of_Qd}
 Let $\bigl(\GrDomain\bigr)$ be a $G^r$-domain and $d\geqslant 0$ an integer, then 
 \begin{equation}\label{eq:Grd_equals_HnQ}
 G^{r}_{d}(\GrDomain)= H_{n}\bigl(\quotientSpace_{{d},\bullet}\bigr),
 \end{equation}
 where $H_{n}\bigl(\quotientSpace_{{d},\bullet}\bigr)$ is the top homology of the chain complex $\bigl(\quotientSpace_{d,\bullet},\delta\bigr)$.
 \end{proposition}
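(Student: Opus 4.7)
The plan is to argue that this is a genuine corollary of Theorem \ref{Prop_Gr_Hn}, obtained by intersecting the characterization of $G^r$-splines with the degree bound and checking that the bound commutes with the relevant ideal intersections. Since the chain complex $(\quotientSpace_{d,\bullet},\delta)$ has no terms in degree above $n$, its top homology is simply the kernel of $\delta_n$, so I only need to identify that kernel with $G^r_d(\GrDomain)$.

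First I would unpack what $\delta_n$ looks like at the top of the complex. For $\sigma\in\Delta_n$ we have $\quotientSpace_d(\sigma)=\totalSpace_d(\sigma)=\ambRing_d(\sigma)$, while for an interior $(n-1)$-face $\tau=\sigma_i\cap\sigma_j$ we have $\quotientSpace_d(\tau)=\totalSpace_d(\tau)/\subSpace_d(\tau)$ with $\subSpace_d(\tau)=\totalSpace_d(\tau)\cap\idealComplex(\tau)$. Chasing the definition \eqref{eq:delta}, the map $\delta_n$ sends a tuple $(f_\sigma)_{\sigma\in\Delta_n}$ to the element whose $\tau$-component is $\pm(f_{\sigma_i}-f_{\sigma_j})$ read modulo $\subSpace_d(\tau)$. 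Hence $(f_\sigma)\in\ker\delta_n$ exactly when $f_{\sigma_i}-f_{\sigma_j}\in\subSpace_d(\tau)$ for every interior $(n-1)$-face $\tau=\sigma_i\cap\sigma_j$.

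Next I would reduce to the criterion established in the proof of Theorem \ref{Prop_Gr_Hn}. That proof shows that a tuple $(f_\sigma)\in\bigoplus_{\sigma}\ambRing(\sigma)$ belongs to $G^r(\GrDomain)$ if and only if $f_{\sigma_i}-f_{\sigma_j}\in\idealComplex(\tau)$ for every pair of adjacent $n$-faces. The key observation to close the loop is that when $\deg f_{\sigma_i},\deg f_{\sigma_j}\leqslant d$ we automatically have $f_{\sigma_i}-f_{\sigma_j}\in\totalSpace_d(\tau)$; therefore the condition $f_{\sigma_i}-f_{\sigma_j}\in\idealComplex(\tau)$ is equivalent to $f_{\sigma_i}-f_{\sigma_j}\in\totalSpace_d(\tau)\cap\idealComplex(\tau)=\subSpace_d(\tau)$. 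I would state this compatibility as a short one-line justification, since no new algebra is required beyond noting the inclusion $\ambRing_d(\sigma_i)+\ambRing_d(\sigma_j)\subseteq\totalSpace_d(\tau)$.

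Combining the two steps gives the chain of equalities
\begin{equation*}
H_n(\quotientSpace_{d,\bullet})=\ker\delta_n=\Biggl\{(f_\sigma)\in\bigoplus_{\sigma\in\Delta_n}\ambRing_d(\sigma)\colon f_{\sigma_i}-f_{\sigma_j}\in\idealComplex(\tau)\text{ for each }\tau=\sigma_i\cap\sigma_j\in\Delta_{n-1}^\circ\Biggr\}=G^r_d(\GrDomain),
\end{equation*}
where the last equality is Theorem \ref{Prop_Gr_Hn} restricted to polynomials of degree at most $d$. I do not anticipate a genuine obstacle here; the only subtlety worth flagging explicitly is the interchange $\totalSpace_d(\tau)\cap\idealComplex(\tau)=\subSpace_d(\tau)$, which is just the definition \eqref{eq:idealJ} but deserves to be pointed out so the reader sees that imposing the degree bound before or after imposing $G^r$-continuity yields the same space.
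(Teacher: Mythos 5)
Your proposal is correct and follows essentially the same route as the paper's proof: identify the top homology with $\ker\delta_n$, invoke Theorem \ref{Prop_Gr_Hn} to characterize membership in $G^r(\GrDomain)$ via $f_{\sigma_i}-f_{\sigma_j}\in\idealComplex(\tau)$, and intersect with the degree bound. The only difference is that you spell out explicitly the compatibility $\totalSpace_d(\tau)\cap\idealComplex(\tau)=\subSpace_d(\tau)$ and why the degree bound puts the differences inside $\totalSpace_d(\tau)$, a step the paper's more terse proof leaves implicit.
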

 \begin{proof}
 By definition, $G^{r}_{d}(\GrDomain)=G^{r}(\GrDomain)\cap \oplus_{\sigma\in\Delta_{n}}\totalSpace_{d}(\sigma)$. By Theorem \ref{Prop_Gr_Hn}, $G^{r}(\GrDomain)$ is the kernel of $\delta_{n}:\oplus_{\sigma\in\Delta_{n}}\ambRing(\sigma)\to\oplus_{\tau\in\Delta_{n-1}^{\circ}}\splineRing(\tau)$. Hence, $G^{r}_{d}(\GrDomain)$ is the kernel of $\delta_{n}$ restricted to $\oplus_{\sigma\in\Delta_{n}}\totalSpace_{d}(\sigma)$, which is exactly $H_{n}(\quotientSpace_{d,\bullet})$.
 \end{proof}
 With Proposition \ref{prop:Grd_is_top_homology_of_Qd}, we estimate $\dim G^{r}_{d}(\GrDomain)$ by comparing it to the \emph{Euler characteristic} $\chi(\quotientSpace_{d,\bullet})$ of $\quotientSpace_{d,\bullet}$, which is defined as
 \begin{equation*}
 \chi(\quotientSpace_{d,\bullet})=\sum_{k=0}^{n}(-1)^{k}\cdot\sum_{\alpha\in(\Delta,\partial\Delta)_{k}}\dim \quotientSpace_{d}(\alpha).
 \end{equation*}
 The dimension of $G^{r}$-spline spaces and Euler characteristic $\chi(\quotientSpace_{d,\bullet})$ are related by the following corollary:
 \begin{corollary}\label{cor:dim_Grd_equals_chi_plus_H}
 For any integer $d\geqslant 0$, and any $G^{r}$-domain $(\GrDomain)$, we have 
 \begin{equation}\label{eq:dim2cellcase_general_form}
 \dim G^{r}_{d}(\GrDomain)=(-1)^{n}\chi(\quotientSpace_{d,\bullet})+\sum_{k=0}^{n-1}(-1)^{n+k+1}\dim H_{k}(\quotientSpace_{d,\bullet}).
 \end{equation}
 \end{corollary}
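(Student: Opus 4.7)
The plan is to derive the stated formula as a direct application of the Euler--Poincar\'e principle to the chain complex $(\quotientSpace_{d,\bullet},\delta)$, combined with the identification $G^r_d(\GrDomain) = H_n(\quotientSpace_{d,\bullet})$ from Proposition \ref{prop:Grd_is_top_homology_of_Qd}. The only preliminary check is that the Euler--Poincar\'e principle is applicable, which requires all chain groups to be finite-dimensional $\RR$-vector spaces. This holds because $\totalSpace_d(\alpha)$ is a finite sum of polynomial spaces $\ambRing_d(\sigma)$ of degree at most $d$ in $n$ variables, each of which is finite-dimensional; its quotient $\quotientSpace_d(\alpha)$ is therefore also finite-dimensional.

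Once finite-dimensionality is in hand, I would invoke the standard Euler--Poincar\'e identity, which states that for a bounded chain complex of finite-dimensional vector spaces the alternating sum of the dimensions of the chain groups equals the alternating sum of the dimensions of the homology groups:
\begin{equation*}
\chi(\quotientSpace_{d,\bullet}) = \sum_{k=0}^{n}(-1)^{k}\sum_{\alpha\in(\Delta,\partial\Delta)_{k}}\dim \quotientSpace_{d}(\alpha) = \sum_{k=0}^{n}(-1)^{k}\dim H_{k}(\quotientSpace_{d,\bullet}).
\end{equation*}
I would then isolate the top-degree term $k=n$ on the right:
\begin{equation*}
\chi(\quotientSpace_{d,\bullet}) = (-1)^{n}\dim H_{n}(\quotientSpace_{d,\bullet}) + \sum_{k=0}^{n-1}(-1)^{k}\dim H_{k}(\quotientSpace_{d,\bullet}).
\end{equation*}

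Finally, I would apply Proposition \ref{prop:Grd_is_top_homology_of_Qd} to replace $H_{n}(\quotientSpace_{d,\bullet})$ by $G^{r}_{d}(\GrDomain)$, and then solve for $\dim G^{r}_{d}(\GrDomain)$ by multiplying through by $(-1)^{n}$ and moving the lower-homology terms to the other side. This yields
\begin{equation*}
\dim G^{r}_{d}(\GrDomain) = (-1)^{n}\chi(\quotientSpace_{d,\bullet}) - \sum_{k=0}^{n-1}(-1)^{n+k}\dim H_{k}(\quotientSpace_{d,\bullet}),
\end{equation*}
and rewriting $-(-1)^{n+k} = (-1)^{n+k+1}$ gives exactly the claimed identity. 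There is no substantive obstacle in this argument; it is a purely bookkeeping consequence of Euler--Poincar\'e together with the theorem identifying the top homology with the spline space. The only care needed is in matching the sign conventions in the index shifts, which is the sole place an error could slip in.
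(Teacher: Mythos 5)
Your proof is correct and follows essentially the same route as the paper: the Euler--Poincar\'e identity for the bounded complex of finite-dimensional spaces $\quotientSpace_{d,\bullet}$, combined with Proposition \ref{prop:Grd_is_top_homology_of_Qd} identifying $G^{r}_{d}(\GrDomain)$ with $H_{n}(\quotientSpace_{d,\bullet})$, and a sign rearrangement. The sign bookkeeping in your final step checks out, so nothing is missing.
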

 \begin{proof}
 For any chain complex $C_{\bullet}$ of finite dimensional vector spaces such that $C_{k}=0$ if $k>n$ or $k<0$, we have
 \begin{equation*}
 \chi(C_{\bullet})=\sum_{k=0}^{n}(-1)^{k}\dim C_{k}=\sum_{k=0}^{n}(-1)^{k}\dim H_{k}(C_{\bullet}).
 \end{equation*}
 In particular, this holds for $C_{\bullet}=\quotientSpace_{d,\bullet}$. Hence, \eqref{eq:Grd_equals_HnQ} implies \eqref{eq:dim2cellcase_general_form}.
 \end{proof}

 Specializing to $n=2$ cases, the construction and results in this section can easily be adapted to the bidegree case.
 More precisely, if $\Delta$ is a two dimensional complex, and $\sigma\in\Delta_2$ then a polynomial $f(u,v)\in\ambRing(\sigma)$ is of bidegree $(d,d)$ if $\deg f(u,1)$, and $\deg f(1,v)$ are $\leqslant d$. 
 We write $\ambRing_{(d,d)}(\sigma)$ for the set of polynomials in $\ambRing(\sigma)$ of bidegree $(d,d)$. 
 The space of bidegree $(d,d)$ splines is the set
 \begin{equation*}
 G^{r}_{(d,d)}(\GrDomain)=\bigl\{f\in G^{r}(\GrDomain)\colon\deg(f_{\sigma})\in\ambRing_{(d,d)}(\sigma)~\mbox{for every}~\sigma\in\Delta_{2}\bigr\},
 \end{equation*}
 where $f=(f_\sigma)_{\sigma\in\Delta_2}$, and $f|_\sigma=f_\sigma$.
 
 Similarly, we can adapt \eqref{eq:ringT}, \eqref{eq:idealJ}, and \eqref{eq:quotientQ} and define $\totalSpace_{(d,d)}(\alpha)$, $\subSpace_{(d,d)}(\alpha)$, and $\quotientSpace_{(d,d)}(\alpha)$, respectively, for $\alpha\in\Delta_k$ for any $0\leqslant k\leqslant n$.
 We refer to $G^{r}_{d}(\GrDomain)$, $\totalSpace_{d,\bullet}$, $\subSpace_{d,\bullet}$, $\quotientSpace_{d,\bullet}$ as \emph{total degree spaces}, and $G^{r}_{(d,d)}(\GrDomain)$, $\totalSpace_{(d,d),\bullet}$, $\subSpace_{(d,d),\bullet}$, $\quotientSpace_{(d,d),\bullet}$ as \emph{bidegree spaces (or bigrading)}.
 
 We study the terms of the splines dimension formula \eqref{eq:dim2cellcase_general_form} in Sections \ref{section:spline_complex_dim2} and \ref{section:explicit_computations}, where we restrict the study to $G^r$-splines on 2-dimensional cell complexes $\Delta$. 
 The results will be presented for both $\dim G^{r}_{d}(\GrDomain)$ and $\dim G^{r}_{(d,d)}(\GrDomain) $.

 \subsection{Constructing a basis for geometrically continuous spline spaces}\label{section:the_real_engine}
 
 This section is devoted to develop a general method to generate certain subspace of a quotient ring. In particular, it gives an algorithm to construct a basis for a $G^{r}$-spline space. 
 The main result in this section is Lemma \ref{lemma:a_basis_of_Fd}, which describes the method on generating certain subspaces of a quotient ring.
The algorithm for the $G^{r}$-spline basis will be an immediate consequence of this result.
 Later in Section \ref{section:spline_complex_dim2}, Lemma \ref{lemma:a_basis_of_Fd} will also play an important role for computing the dimension of the terms in \eqref{eq:dim2cellcase_general_form}. 
 Lemma \ref{lemma:a_basis_of_Fd} does not depend on our setting so we state it in general for a finitely generated real polynomial ring. 
 We introduce the following notation that will be helpful to define the subspaces of the quotient ring we want to describe.
 \begin{definition}\label{def:remainder_of_f} 
 Let $f\in S$ be polynomial in a finitely generated polynomial ring $S$ over $\RR$, and let $I\subseteq S$ be an ideal.
 We define the \emph{remainder of $f$ with respect to $I$} to be the remainder of $f$ by performing the division algorithm to $f$ with respect to the reduced Gr\"{o}bner basis of $I$ for a fixed monomial order $\succ$ on $S$. 
 We denote this by $\mathfrak{r}(f,I)$, or simply $\mathfrak{r}(f)$ if $I$ is clear from the context.
 \end{definition}
 \begin{lemma}\label{lemma:a_basis_of_Fd}
 Let $S$ be a finitely generated polynomial ring over $\RR$, with a fixed monomial order $\succ$, and let $W\subseteq S$ is a finite dimensional $\RR$-vector space with basis $\mcB$. 
 Assume $I\subseteq S$ is an ideal.
 Then, the linear span $\spanset\{\mathfrak{r}(b)\colon b\in\mcB\}\cong W/(W\cap I)$, where $\mathfrak{r}(b)$ is the remainder of $b\in \mcB$ with respect to $I$. 
 \end{lemma}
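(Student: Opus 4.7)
The plan is to view the remainder operation $\mathfrak{r}(\cdot, I)\colon S\to S$ as an $\RR$-linear endomorphism of $S$, identify its kernel and image explicitly, and then apply the first isomorphism theorem to its restriction to $W$. This reduces the lemma to two standard facts about reduced Gr\"obner bases, namely linearity of the normal form and the characterization of membership in $I$ via a zero remainder.

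First I would verify that $\mathfrak{r}(\cdot, I)$ is $\RR$-linear. The argument is a uniqueness argument: if $G$ is the reduced Gr\"obner basis of $I$ for the fixed order $\succ$, then for each $f\in S$ there is a unique polynomial $r\in S$ such that $f-r\in I$ and no monomial appearing in $r$ lies in the initial ideal $\mathrm{in}_\succ(I)$; this $r$ is $\mathfrak{r}(f,I)$. Given $f,g\in S$ and $\lambda\in\RR$, the polynomial $\mathfrak{r}(f,I)+\lambda\,\mathfrak{r}(g,I)$ has all its monomials outside $\mathrm{in}_\succ(I)$, and its difference with $f+\lambda g$ lies in $I$; by uniqueness it must equal $\mathfrak{r}(f+\lambda g, I)$.

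Next I would identify the kernel: $\mathfrak{r}(f,I)=0$ if and only if $f\in I$, which is the defining property of a Gr\"obner basis. Restricting the $\RR$-linear map $\mathfrak{r}(\cdot, I)$ to $W$, one obtains a linear map $W\to S$ whose kernel is precisely $W\cap I$. Since $\mcB$ is an $\RR$-basis of $W$, linearity gives that the image equals $\spanset\{\mathfrak{r}(b,I)\colon b\in\mcB\}$. The first isomorphism theorem then yields
\[
W/(W\cap I)\;\cong\;\spanset\{\mathfrak{r}(b,I)\colon b\in\mcB\},
\]
which is exactly the claim.

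There is essentially no analytic obstacle in this proof; its content is purely the $\RR$-linearity and zero-kernel characterization of the Gr\"obner normal form, and both are standard from any commutative algebra treatment of Gr\"obner bases. The one point to be careful about is to invoke the \emph{reduced} Gr\"obner basis (or at least a unique normal form), since without uniqueness the equation $\mathfrak{r}(f+\lambda g,I)=\mathfrak{r}(f,I)+\lambda\,\mathfrak{r}(g,I)$ need not hold. The lemma is deliberately phrased in this abstract form so that it can later be applied with $W$ taken to be a bounded-degree subspace of some $\ambRing(\alpha)$ and $I$ taken to be one of the ideals $\idealComplex(\alpha)$, producing both Algorithm \ref{algorithm:basis_computation_Gr} and the dimension computations of Section \ref{section:spline_complex_dim2} as direct corollaries.
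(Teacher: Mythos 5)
Your proof is correct and follows essentially the same route as the paper: both rest on the fact that the Gr\"obner normal form $\mathfrak{r}(\cdot,I)$ is $\RR$-linear with $\mathfrak{r}(f)=0$ iff $f\in I$, so that its restriction to $W$ has kernel $W\cap I$ and image $\spanset\{\mathfrak{r}(b)\colon b\in\mcB\}$; the paper constructs the inverse map $h$ explicitly while you invoke the first isomorphism theorem, which is the same argument in different packaging. Your explicit justification of linearity via uniqueness of the normal form is a nice touch, since the paper only asserts this step as easy to verify (and implicitly uses it when claiming every element of $V$ is a remainder of some element of $W$).
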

 \begin{proof}
 Let $V=\spanset\{\mathfrak{r}(b)\colon b\in\mcB\}\subseteq S$.
 Then we can define a map $\mathfrak{r}\colon W\to V$ given by $f\mapsto \mathfrak{r}(f)$. 
 If $W\to W/(W\cap I)$ is the quotient map $f\mapsto\overline{f}$, then there exists a map $V\to W/(W\cap I)$ for which the following diagram commutes
 \begin{equation*}
 \begin{tikzcd}
 W \arrow[rd] \arrow[d, "\mathfrak{r}"]
 &\\
 V \arrow[r, "h", dashed]& W/(W\cap I)\,.
 \end{tikzcd}
 \end{equation*}
 Indeed, for any $g\in V$, we may take $g'\in W$ such that $\mathfrak{r}(g')=g$ and define $h(g)=\overline{g'}$.
 This map is well-defined, because if $\mathfrak{r}(g'_{1})=\mathfrak{r}(g'_{2})=g$ for $g_1',g_2'\in W$, then $g'_{1}-g'_{2}\in I$, which implies that $\overline{g'_{1}}=\overline{g'_{2}}$.
 It is easy to verify this map is linear, injective and surjective.
 Hence, $h$ is an isomorphism of vector spaces.
 \end{proof}
 
 By Proposition \ref{prop:Grd_is_top_homology_of_Qd} we know that $G^{r}_{d}(\GrDomain)$ is the kernel of the map $\delta_{n}\colon\quotientSpace_{d,n}\to\quotientSpace_{d,n-1}$, where 
 $\quotientSpace_{d,k}$ is the $k$-th terms in the chain complex $(\quotientSpace_{d,\bullet},\delta_{\bullet})$.
 Recall that by definition we take $\quotientSpace_{d,n}=\totalSpace_{d,n}$, see \eqref{eq:ringT} and \eqref{eq:quotientQ}.
 By construction, for each $\tau\in\Delta_{n-1}^{\circ}$, the set $\totalSpace_{d}(\tau)$ is a linear subspace of the ring $\ambRing_d(\tau)$, and $I=\idealComplex(\tau)$ is an ideal of $\ambRing_d(\tau)$. Hence, Lemma \ref{lemma:a_basis_of_Fd} implies $\mathfrak{r}\bigl(\totalSpace_{d}(\tau)\bigr)\cong\quotientSpace_{d}(\tau)$.
 
 This allows to see $G^{r}_{d}(\GrDomain)$ as the kernel of the map $\mathfrak{r}\circ\delta_{n}\colon\totalSpace_{d,n}\to \mathfrak{r}(\totalSpace_{d,n-1})$, where (by an abuse of notation) we write $\delta_{n}\colon\totalSpace_{d,n}\to\totalSpace_{d,n-1}$ for the restriction of the differential map in $(\ambRing_{\bullet},\delta_{\bullet})$ to $\totalSpace_{d,n}$.
 
 We start the algorithm to compute a basis of $G^{r}_{d}(\GrDomain)$ by fixing a maximal polynomial degree $d$. 
 Taking $k=n$ in \eqref{eq:def_bdry_map}, the boundary map $\partial_{n}$ 
 can be written as a rectangular matrix $\left(a_{\sigma,\tau}\right)_{\tau\in\Delta_{n-1}^\circ\; \sigma\in\Delta_n}$, whose rows are labeled by the interior $(n-1)$-faces 
 and the columns by the maximal $n$-faces of $\Delta$.
 We compute the reduced Gr\"obner basis of the ideal $\idealComplex(\tau)$, for each $\tau\in\Delta_{n-1}^{\circ}$. 
 Then, for every $n$-face $\sigma\supseteq\tau$, if $\mcB(\sigma)$ is the monomial basis of $\totalSpace_{d}(\sigma)$, we use the division algorithm to find the remainder of each polynomial in $\mcB(\sigma)$ with respect to the Gr\"{o}bner basis of $\idealComplex(\tau)$.
 By Lemma \ref{lemma:a_basis_of_Fd}, we know that the collection of those remainders generate $\quotientSpace_{d,n-1}$.
 Next we construct a matrix $A$ which will encode the map $\delta_{n}\colon\totalSpace_{d,n}\to\quotientSpace_{d,n-1}$, 
 Explicitly, if
 \begin{equation*}
 \mathfrak{r}\circ\delta_{n}
 =
 \sum_{\substack{\sigma\in\Delta_{n},\\ m \in \mcB(\sigma)
 }}\mathopen{\raisebox{-1.8ex}{$\Biggl($}}\,\sum_{\substack{\tau\in\Delta_{n-1}^{\circ},\\ q\in \mathfrak{r}\left(\mcB(\sigma)\right)}}c_{\sigma,\tau,m,q}(qe_{\tau})\otimes(me_{\sigma})^{*}\mathclose{\raisebox{-1.8ex}{$\Biggr)$}},
 \end{equation*}
 where $c_{\sigma,\tau,m,q}$ is the product of $a_{\sigma,\tau}$ in the boundary map and the coefficient of $q$ in $\mathfrak{r}(m)$. 
 The entries of $A$ in the row labeled by $(\tau,q)$ and the column labeled by $(\sigma,m)$ is $c_{\sigma,m,\tau,q}$. 
 Notice that by Proposition \ref{prop:Grd_is_top_homology_of_Qd}, the kernel of $A$ is precisely the space $G^{r}_{d}(\GrDomain)$. 
 Therefore, a basis of this kernel give us a basis for the vector space $G^r_d(\Delta,\Phi)$, which is the list $\textsc{LB}$ of tuples $\bigl(f(\sigma)\colon\sigma\in\Delta_{n}\bigr)$ of functions $f(\sigma)\in\ambRing(\sigma)_{\leqslant d}$ in Algorithm \ref{algorithm:basis_computation_Gr}.
 
 \begin{algorithm}
 \caption{Basis computation of the space $G^{r}_{d}(\GrDomain)$}\label{algorithm:basis_computation_Gr}
 \begin{algorithmic}
 \State Fix a graded reverse lexicographic order $\succ$ for polynomial ring $\otimes_{\sigma\in\Delta_{n}}\ambRing(\sigma)$;
 \State Compute the reduced Gr\"{o}bner basis $\textsc{GB}(\tau)$ of $\idealComplex(\tau)$ for each $\tau\in\Delta_{n-1}^{\circ}$;
 \For{$(\sigma,\tau)$ in $\Delta_{n}\times \Delta_{n-1}^{\circ}$ with $\tau\subseteq\sigma$}
 \State $\mathrm{sign}(\sigma,\tau) \gets $ sign of $\partial_{n}$ from $\sigma$ to $\tau$;
 \For{$m$ in the list of all the monomials in $\ambRing(\sigma)_{\leqslant d}$}
 \State $\mathfrak{r}(m) \gets$ remainder of $m$ with respect to $\textsc{GB}(\tau)$;
 \For{$q$ in the list of all monomials in $\ambRing(\tau)_{\leqslant d}$}
 \State $c(\sigma,\tau,m,q) \gets$ coefficient of $q$ in $\mathfrak{r}(m)$ multiplied by $\mathrm{sign}(\sigma,\tau)$;
 \EndFor
 \EndFor
 \EndFor
 \State Define a matrix $A$ by letting the entry of $A$ in row $(\tau,q)$ and column $(\sigma,m)$ to be $c(\sigma,\tau,m,q)$;
 \State Calculate a list of basis $LB$ of kernel of $A$, which is a basis of $G^{r}_{d}(\GrDomain)$;
 \end{algorithmic}
 \end{algorithm}
 
 In Section \ref{section:explicit_computations}, the computational results on the dimension of a $G^1$-spline space on a cube are obtain in \texttt{Macaulay2} \cite{M2} by an implementation of Algorithm \ref{algorithm:basis_computation_Gr}.
 
 \section{Topological surfaces and $G^r$-domains}\label{Section:Gsplines_by_differential_geometric_method}
 The notion of geometric continuity was originally defined by differential-geometric methods, see for example \cite{hahn1989geometric} and \cite{peters2002_handbook}. 
 In \cite{mourrain2016dimension}, the space of $G^{1}$-spline functions is defined over a collection of 2-dimensional patches glued together by transition maps. 
 In this section, we explain how the notion of $G^1$-geometric continuity in our framework and that in \cite{mourrain2016dimension} are related. 
 
 First, recall from \cite[Definition 2.1]{mourrain2016dimension}, a \emph{topological surface} $\mcM$ is a collection $\mcM_2$ of polygons together with a set of homeomorphisms between pairs of polygonal edges with the condition that each polygonal edge can be paired with at most one other edge, and it cannot be glued with itself. 
 Each polygon is assumed to be embedded in $\RR^2$.
 By an abuse of notation, two polygons are said to share an edge if there is such a homeomorphism between two edges one in each of the two polygons.
 From this, it is clear that the collection of polygons and homeomorphisms between pairs of polygonal edges of a topological surface $\mcM$ define a $2$-dimensional cell complex $\Delta$ such that the set of 2-faces $\Delta_{2}=\mcM_{2}$. 
 The coordinates $X(\sigma)$ associated to each polygon $\sigma\in\Delta_2$ are given by the embedding of $\sigma$ in $\RR^{2}$.
 As in Section \ref{section_polyhedral_frame}, we denote the corresponding homeomorphism by $\psi_{\sigma}\colon \sigma\rightarrow X(\sigma)$. 
 Notice that these embeddings of the faces $\sigma\in\Delta_2$ as polygons in $\RR^2$ mean that every edge or $\sigma$ is locally algebraic (see Definition \ref{def:localg}). 
 
 Suppose $\Delta$ is such a 2-dimensional cell complex obtained from a topological surface $\mcM$, with coordinates $\bigl\{X(\sigma)\colon \sigma\in\Delta_{2}\bigr\}$. 
 The (geometric) transition maps associated to pairs of polygons in $\mcM$ as introduced in \cite[Definition 2.2]{mourrain2016dimension} can be interpreted in our setting as follows.
 \begin{definition}[Geometric transition maps]\label{def_transition_map}
 Let $\sigma_1$ and $\sigma_2$ two faces in $\Delta_2$, such that $\sigma_1\cap\sigma_2=\tau\in\Delta_1$. 
 A \emph{(geometric) transition map} associated to $\tau$ consists of the following:
 \begin{itemize}
 \renewcommand{\labelitemi}{\scriptsize$\blacksquare$}
 \item for each face $\sigma_i\in\Delta_2$ containing $\tau$, an open set $U_{\tau,\sigma_i}\subseteq X(\sigma_i)$ containing $\psi_{\sigma_i}(\tau)$;
 \item a $C^{1}$-diffeomorphism between the open sets
 \begin{equation}\label{eqn_transition_map}
 \varphi_{21}
 \colon 
 U_{\tau,\sigma_{1}}\to U_{\tau,\sigma_2}
 \end{equation}
 such that\,
 $\varphi_{21}|_{\psi_{\sigma_{1}}(\tau)}=\psi_{\sigma_{2}}\circ\psi_{\sigma_{1}}^{-1}|_{\psi_{\sigma_{1}}(\tau)}$\,.
 \end{itemize} 
 \end{definition}
 
 \begin{remark}\label{rmk:geometric_transition_maps}
 The map $\varphi_{21}$ is called transition map in \cite{mourrain2016dimension}, we called it here geometric transition map to distinguish it from the algebraic transition maps from Definition \ref{def:algebraic_transition_maps}. 
 Notice that the notion of geometric transition maps can be generalized to an $n$-dimensional cell complex $\Delta$, for this we can simply replace $\Delta_{2}$ with $\Delta_{n}$, and $\Delta_{1}$ with $\Delta_{n-1}$ in Definition \ref{def_transition_map}.
 \end{remark}
 Our aim is to obtain a $G^{r}$-domain from a given topological surface and a given collection of geometric transition maps. However, a geometric transition map associated to an edge $\tau$, which is $C^r$-continuous along $\tau$, is not necessarily polynomial and therefore it does not necessarily corresponds to an algebraic transition map.
 To describe the type of geometric transition maps from which we are able to obtain algebraic ones, we introduce the property of being \emph{almost algebraic}. 
 We need some preparations before we can state such a notion in Definition \ref{def_almost_algebraic}.
 
 Let $U\subseteq\RR^{n}$ with coordinates $(x_{1},\dots,x_{n})$, and take $p\in U$. 
 If $\bm{j}=(j_{1},\dots,j_{n})\in\NN^{n}$ is a multiindex, henceforth we follow the convention that $\bm{x}^{\bm{j}}=x_{1}^{j_{1}}\cdots x_{n}^{j_{n}}$ and $|\bm{j}|=\sum_{i=1}^{n}j_{i}$.
 
 Recall that if $s\leqslant r$, then the $s$-th order \emph{jet} $J^{s}_{p} (f)$ of a function $f$ which is $C^{r}$-continuous at the point $p$ is defined as the truncated Taylor expansion of $f$ at $p$ of degree $s$. Explicitly, if the coordinates of $p$ are $(a_{1},\dots,a_{n})= \bm{a}$, then
 \begin{equation}\label{eqn:taylor_truncation}
 \begin{split}
 J^{s}_{p}\colon C^{r}_{p}&\to{\RR[x_{1},\dots,x_{n}]}/{I(p)^{s+1}}\\
 f&\mapsto\sum_{|\bm{j}|\leqslant s}\frac{\partial^{\bm{j}}f}{\partial \bm{x}^{\bm{j}}}(p)(\bm{x}-\bm{a})^{\bm{j}},
 \end{split} 
 \end{equation}
 where $C^{s}_{p}$ is the set of all functions that are $C^{s}$-continuous at $p$, and
 \begin{equation}\label{eq:idealp}I(p)=\bigl\langle x_{1}-a_{1},\dots,x_{n}-a_{n}\bigr\rangle
 \end{equation}
 is the ideal of $\RR[x_1,\dots, x_n]$ generated by the polynomials $x_i-a_i$.
 
 Denote by $C^{1}(U)$ the set of all functions $f\colon U\rightarrow \RR$ which are continuously differentiable functions on $U\subseteq \RR^n$.
 \begin{definition}\label{def_vanishing_order}
 If $f\in C^{1}(U)$, we say that $f$ \emph{vanishes to order $s$ at a point $p\in U$} if $f$ is $C^{s}$-continuous at $p$ and
 \begin{equation*}
 J^{s}_{p}(f)=0.
 \end{equation*}
 If $S\subseteq U$, then we say $f$ \emph{vanishes to order $s$ on $S$} if $f$ vanishes to order $s$ at $p$ for each $p\in S$.
 \end{definition}
 Since a geometric transition map $\varphi_{{21}}$ is a $C^{1}$-diffeomorphism, then it induces a homomorphism 
 \begin{align}\label{eq:pullback}
 \varphi_{{21}}^{*}\colon C^{1}(U_{\tau,\sigma_{1}})&\to C^{1}(U_{\tau,\sigma_{2}})\\ \nonumber
 f&\mapsto f\circ\varphi_{{21}}.
 \end{align}
 \begin{definition}[Almost algebraic property]\label{def_almost_algebraic}
 Let $\varphi_{{21}}$ be a geometric transition map across $\tau$ as introduced in (\ref{eqn_transition_map}). 
 We say $\varphi_{{21}}$ is \emph{almost algebraic} (up to order $r$) if there exist a homomorphism 
 \begin{equation} \label{eq:atm}
 \phi_{21}\colon\ambRing(\sigma_{2})/I_{\sigma_{2}}(\tau)^{r+1}\to\ambRing(\sigma_{1})/I_{\sigma_{1}}(\tau)^{r+1}
 \end{equation}
 such that $\bigl(\widetilde{\phi}_{21}-\varphi^{*}_{{21}}\bigr)(f)$ vanishes to order $r$ along $\psi_{\sigma_{1}}(\tau)$, for any $f\in\ambRing(\sigma_{2})$, where $\varphi^*_{21}$ is the homomorphism in \eqref{eq:pullback}, and $\widetilde{\phi}_{21}\colon\ambRing(\sigma_{2})\to\ambRing(\sigma_{1})$ is a lift of $\phi_{21}$. If this is the case, we call $\phi_{21}$ the \emph{$G^r$-algebraic transition map associated to $\varphi_{21}$}. 
 \end{definition}
 It is clear that Definition \ref{def_almost_algebraic} does not depend on the choice of the lift $\widetilde{\phi}_{21}$. In Lemma \ref{lem:algebraic_trans_unique} we prove that if there exists a $G^{r}$-algebraic transition map associated to an geometric transition map $\varphi_{21}$, then it is unique.
 \begin{lemma}\label{lem:algebraic_trans_unique}
 Let $\tau=\sigma_1\cap\sigma_2\in\Delta_1^\circ$, for $\sigma_1,\sigma_2\in\Delta_2$. If $\varphi_{{21}}$ is an almost algebraic transition map up to order $r$ across $\tau$, then the $G^{r}$-algebraic transition map $\phi_{21}$ associated to $\varphi_{{21}}$ is unique. 
 \end{lemma}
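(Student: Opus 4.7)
The plan is to show that for any two $G^{r}$-algebraic transition maps $\phi_{21},\phi_{21}'$ associated to the same geometric transition map $\varphi_{21}$ and for every $f\in\ambRing(\sigma_{2})$, the difference of their lifts lies in $I_{\sigma_{1}}(\tau)^{r+1}$; this will force $\phi_{21}(\bar{f})=\phi_{21}'(\bar{f})$ in $\ambRing(\sigma_{1})/I_{\sigma_{1}}(\tau)^{r+1}$, giving uniqueness.

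First I would fix lifts $\widetilde{\phi}_{21},\widetilde{\phi}_{21}'\colon\ambRing(\sigma_{2})\to\ambRing(\sigma_{1})$. By Definition \ref{def_almost_algebraic}, for each $f\in\ambRing(\sigma_{2})$ both $(\widetilde{\phi}_{21}-\varphi_{21}^{*})(f)$ and $(\widetilde{\phi}_{21}'-\varphi_{21}^{*})(f)$ vanish to order $r$ along $\psi_{\sigma_{1}}(\tau)$. Subtracting, the polynomial $g_{f}:=\widetilde{\phi}_{21}(f)-\widetilde{\phi}_{21}'(f)\in\ambRing(\sigma_{1})$ vanishes to order $r$ at every point of the real arc $\psi_{\sigma_{1}}(\tau)$. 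The remaining, and main, step is to upgrade this pointwise geometric vanishing to the ideal membership $g_{f}\in I_{\sigma_{1}}(\tau)^{r+1}$.

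For this upgrade I would use that the polygons of the topological surface $\mcM$ are embedded in $\RR^{2}$, so each edge is locally algebraic in the sense of Definition \ref{def:localg}; hence $I_{\sigma_{1}}(\tau)=\langle h\rangle$ for some irreducible polynomial $h\in\ambRing(\sigma_{1})\cong\RR[x,y]$. Since $\RR[x,y]$ is a UFD, I can factor $g_{f}=h^{k}q$ with $h\nmid q$, and the claim reduces to proving $k\geqslant r+1$. Both the singular locus of $V(h)$ and the intersection $V(h)\cap V(q)$ are proper Zariski-closed subsets of the irreducible curve $V(h)$, hence finite, while $\psi_{\sigma_{1}}(\tau)\subseteq V(h)$ is an infinite real arc; so there exists $p\in\psi_{\sigma_{1}}(\tau)$ with $\nabla h(p)\neq 0$ and $q(p)\neq 0$. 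At this $p$, $h$ is part of a regular system of parameters of the local ring at $p$, so $g_{f}=h^{k}q$ has vanishing order exactly $k$ there; combined with $J^{r}_{p}(g_{f})=0$ this forces $k\geqslant r+1$, and hence $g_{f}\in\langle h\rangle^{r+1}=I_{\sigma_{1}}(\tau)^{r+1}$.

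The hard part is this last step: passing from pointwise vanishing on a real one-dimensional arc to membership in $\langle h\rangle^{r+1}$. Irreducibility of $h$, guaranteed by local algebraicity, is what keeps the two ``bad'' loci finite and produces the point $p$ where a local regular-parameter computation applies. Once $g_{f}\in I_{\sigma_{1}}(\tau)^{r+1}$ is established for every $f$, the maps $\phi_{21}$ and $\phi_{21}'$ induce the same $\RR$-algebra homomorphism on $\ambRing(\sigma_{2})/I_{\sigma_{2}}(\tau)^{r+1}$, and uniqueness follows.
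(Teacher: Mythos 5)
Your proposal is correct, and its first half is exactly the paper's argument: you telescope $\widetilde{\phi}_{21}-\widetilde{\phi}_{21}'$ through $\varphi_{21}^{*}$, use the almost algebraic property twice, and conclude that the polynomial $g_{f}$ has vanishing $r$-jets along $\psi_{\sigma_{1}}(\tau)$. Where the two proofs part ways is the upgrade from jet vanishing to $g_{f}\in I_{\sigma_{1}}(\tau)^{r+1}$: the paper simply asserts this ("because $\widetilde{\phi}_{21}-\widetilde{\phi}_{21}'$ is algebraic"), the justification being its separate Lemma \ref{lemma_ec_poln}, which proves the equivalence of order-$r$ vanishing and membership in $\langle \ell^{r+1}\rangle$ by induction on $r$, differentiating and using that $\langle \ell\rangle$ is prime; that argument is coordinate-free, works in any number of variables, and is reused later for Lemma \ref{lemma:equivalent_conditions_Gr_join}. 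You instead prove the upgrade inline by a multiplicity argument: factor $g_{f}=h^{k}q$ in the UFD $\RR[x,y]$ with $h\nmid q$, pick a point $p$ of the arc that is a smooth point of $V(h)$ with $q(p)\neq 0$, and compare the local order $k$ with the hypothesis $J^{r}_{p}(g_{f})=0$. This is a legitimate alternative and arguably more geometric, though two small points deserve care: the trivial case $g_{f}=0$ should be set aside before factoring, and the finiteness of the two ``bad'' loci is cleanest not via irreducibility of the real curve but via coprimality over $\CC$ (Bézout/resultants) together with the fact that $I_{\sigma_{1}}(\tau)=\langle h\rangle$ is the full vanishing ideal of the arc, which is what guarantees that neither $q$ nor both partials of $h$ vanish identically on $\psi_{\sigma_{1}}(\tau)$. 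As written, your argument is tied to two variables, whereas the paper's lemma-based route generalizes verbatim; for the present two-dimensional statement both are equally valid.
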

 \begin{proof}
 Suppose both $\phi_{21}$ and $\phi_{21}'$ are algebraic transition maps induced by $\varphi_{{21}}$. 
 If $f\in\ambRing(\sigma_{2})$, then
 \begin{align*}
 \bigl(\widetilde{\phi}_{21}-\widetilde{\phi}_{21}'\bigr)(f)
 &=\bigl(\widetilde{\phi}_{21}-\varphi_{{21}}^{*}+\varphi_{{21}}^{*}-\widetilde{\phi}_{21}'\bigr)(f)\\
 &=\bigl(\widetilde{\phi}_{21}-\varphi_{{21}}^{*}\bigr)(f)+\bigl(\varphi_{{21}}^{*}-\widetilde{\phi}_{21}'\bigr)(f).
 \end{align*}
 Both $\bigl(\widetilde{\phi}_{21}-\varphi_{{21}}^{*}\bigr)(f)$ and $\bigl(\varphi_{{21}}^{*}-\widetilde{\phi}_{21}'\bigr)(f)$ vanishes to order $r$ along $\tau$. 
 Hence so does their sum. 
 Because $\widetilde{\phi}_{21}-\widetilde{\phi}_{21}'$ is algebraic, then $\bigl(\widetilde{\phi}_{21}-\widetilde{\phi}_{21}'\bigr)(f)\in I_{\sigma_{1}}(\tau)^{r+1}$.
 This holds for any $f\in\ambRing(\sigma_{2})$, so this implies $\phi_{21}=\phi_{21}'$.
 \end{proof}
 It follows from Lemma \ref{lem:algebraic_trans_unique} that given a collection $\Phi_{G}$ of geometric transition maps, which are almost algebraic up to order $r$, we obtain a collection of $G^{r}$-algebraic transition maps $\Phi_A$ associated to $\Phi_{G}$. 
 The compatibility conditions \ref{cond1}--\ref{cond3} in Definition \ref{def:compatibility_conditions_alg} applied to $\Phi_A$ lead to a collection of algebraic transition maps $\Phi \supseteq \Phi_A$ which (besides the transition maps between adjacent faces) contains all the algebraic transition maps between pair of $n$-faces whose intersection is nonempty, see Remark \ref{rem:PhiBasis}.
 In this case, we say that $\Phi$ is \emph{associated to} $\Phi_{G}$ and the $G^{r}$-domain $(\Delta,\Phi)$ is obtained from the topological surface $\mcM$ with geometric transition maps $\Phi_{G}$.
 
 In fact, for $2$-dimensional cell complexes and $r=1$, if the collection $\Phi$ of $G^1$-algebraic transition maps is compatible on $\Delta$ (i.e., it satisfies the compatibility conditions in Definition \ref{def:compatibility_conditions_alg}), then the maps in $\Phi_{G}$ satisfy the $G^1$-\emph{compatibility conditions at the vertices} of $\Delta$ given in \cite[Section 2.3]{mourrain2016dimension}.
 In Section \ref{sec:Examples}, we given several examples of $\Phi_{G}$ such that its associated $\Phi$ satisfies compatibility conditions, and thus defines a $G^{r}$-domain.
 
 We now relate the definition of geometric continuity introduced in \cite{mourrain2016dimension} and the one in our setting given in Section \ref{section:definition_construction}, Definition \ref{def:Gr_join_algebraic}. 
 \begin{definition}
 Let $\mcM$ be a topological surface and $\Delta$ the cell complex obtained from $\mcM$. 
 The \emph{space of differentiable functions} $\mcS^{1}(\mcM)$ over $\mcM$ is defined as the subset of $\oplus_{\sigma\in\Delta_{2}}\ambRing(\sigma)$ such that $f=(f_{\sigma})\in \mcS^{1}(\mcM)$ if for every pair $\sigma_{1},\sigma_{2}\in\Delta_2$ of adjacent faces, we have
 \begin{equation*}
 J_p^{1}\bigl(f_{\sigma_{1}}\circ\varphi_{{21}})
 =
 J_p^{1}(f_{\sigma_{2}}\bigr),~\mbox{for each}~p\in\sigma_{1}\cap\sigma_{2},
 \end{equation*}
 where $J_{p}^{1}$ is the $1$-st order jet of $f$, as defined in \eqref{eqn:taylor_truncation}. 
 The set of all differentiable functions $f\in \mcS^1(\mcM)$ such that 
 $\deg f_{\sigma}\leqslant d$ for each $\sigma\in\Delta_{2}$ is denoted $\mcS^{1}_{d}(\mcM)$ .
 \end{definition}
 For a topological surface $\mcM$, let $\Phi_{G}=\{\varphi_{21}\colon \sigma_1,\sigma_2\in\mcM_2 \mbox{\; share a common edge}\},$
 be a collection of geometric transition maps.
 The relation between the notation of geometric continuity is the following.
 \begin{theorem}\label{prop:relation_notions_geomeric_continuous_functions}
 If the $G^{1}$-domain $(\GrDomain)$ is obtained from a topological surface $\mcM$ with geometric transition maps $\Phi_{G}$, then
 \begin{equation*}
 G^{1}(\GrDomain)=\mcS^{1}(\mcM),
 \text{ and \, } 
 G^{1}_{d}(\GrDomain)=\mcS^{1}_{d}(\mcM).
 \end{equation*}
 In particular,
 $
 \dim G^{1}_{d}(\GrDomain)=\dim \mcS^{1}_{d}(\mcM).
 $
 \end{theorem}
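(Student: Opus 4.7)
The plan is to reduce both equalities to an edge-local equivalence between the algebraic $G^{1}$-continuity condition of Definition~\ref{def:Gr_join_algebraic} and the jet condition defining $\mathcal{S}^{1}(\mcM)$, and then to bridge the two via the almost-algebraic property. Since both $G^{1}(\GrDomain)$ and $\mathcal{S}^{1}(\mcM)$ are defined by conditions imposed across each interior edge $\tau=\sigma_{1}\cap\sigma_{2}$, and since by Remark~\ref{rem:PhiBasis} the collection $\Phi$ is determined by its restriction to adjacent pairs, it suffices to fix one such edge and show that for $f_{\sigma_{1}}\in\ambRing(\sigma_{1})$ and $f_{\sigma_{2}}\in\ambRing(\sigma_{2})$ the two continuity conditions across $\tau$ are equivalent. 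The bounded-degree equality $G^{1}_{d}(\GrDomain)=\mathcal{S}^{1}_{d}(\mcM)$ then follows by intersecting with $\bigoplus_{\sigma}\ambRing_{d}(\sigma)$, and the dimension statement is an immediate corollary.

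The main algebraic tool I would establish first is that a $C^{1}$ function $h$ on a neighbourhood of $\psi_{\sigma}(\tau)$ vanishes to order $1$ along $\psi_{\sigma}(\tau)$ if and only if it lies in the $C^{1}$ ideal generated by $I_{\sigma}(\tau)^{2}$. Because $\tau$ is locally algebraic (automatic here since the polygons are embedded in $\RR^{2}$) one may write $I_{\sigma}(\tau)=\langle\ell\rangle$ with $\ell$ irreducible, and the polynomial version of the claim follows by a direct divisibility argument: $h|_{\tau}\equiv 0$ combined with Zariski-density of $\tau$ in $V(\ell)$ yields $\ell\mid h$; writing $h=\ell g$, vanishing of a suitable first derivative of $h$ along $\tau$ forces $\ell\mid(\partial_{i}\ell)\,g$ for some $i$ with $\partial_{i}\ell\neq 0$, and irreducibility of $\ell$ together with $\deg(\partial_{i}\ell)<\deg\ell$ forces $\ell\mid g$, i.e.\ $h\in\langle\ell^{2}\rangle=I_{\sigma}(\tau)^{2}$. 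The converse and the $C^{1}$ version are immediate.

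With this identification in hand, I would translate both continuity conditions to the same inclusion. The algebraic $G^{1}$-continuity amounts to $\widetilde{\phi}_{12}(f_{\sigma_{1}})-f_{\sigma_{2}}\in I_{\sigma_{2}}(\tau)^{2}$ for any polynomial lift $\widetilde{\phi}_{12}$ of $\phi_{12}\in\Phi$, while the $\mathcal{S}^{1}$ condition translates, via the first step, to $\varphi^{*}_{21}(f_{\sigma_{1}})-f_{\sigma_{2}}$ vanishing to order $1$ along $\psi_{\sigma_{2}}(\tau)$. By the almost-algebraic property of Definition~\ref{def_almost_algebraic}, the difference $\widetilde{\phi}_{12}(f_{\sigma_{1}})-\varphi^{*}_{21}(f_{\sigma_{1}})$ also vanishes to order $1$ along $\tau$ and therefore lies in $I_{\sigma_{2}}(\tau)^{2}$ by the identification above. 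Subtracting shows the two conditions are equivalent, and this completes the edge-local argument.

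The step I expect to require the most care is the algebraic--jet correspondence of the second paragraph, because it must cover both the polynomial setting (for the $G^{1}$-algebraic side) and the $C^{1}$ setting (for the pullback by $\varphi_{21}$, which is only assumed a $C^{1}$-diffeomorphism). The irreducibility of the edge-defining polynomial $\ell$ is essential, and one must verify that at least one of its partial derivatives is nonzero, which holds whenever $\ell$ is non-constant in characteristic zero. Once the edge-local equivalence is secured, imposing it across all adjacent face pairs simultaneously yields $G^{1}(\GrDomain)=\mathcal{S}^{1}(\mcM)$, and intersecting with the bounded-degree subspaces on each patch gives the remaining assertions at once.
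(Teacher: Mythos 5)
Your outline follows the paper's own route: the edge-local equivalence is exactly Lemma \ref{lemma:equivalent_conditions_Gr_join}, your divisibility argument is the $r=1$ case of Lemma \ref{lemma_ec_poln} (your explicit use of the fact that $I_{\sigma}(\tau)=\langle\ell\rangle$ is the full vanishing ideal of $\psi_\sigma(\tau)$ is in fact a welcome clarification of how that lemma gets applied to vanishing along $\tau$ rather than along all of $V(\ell)$), and the bridge via the almost-algebraic property and the final intersection with the degree-$d$ subspaces are as in the paper.

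One auxiliary claim, however, is not correct as stated and you should not lean on it: a merely $C^{1}$ function that vanishes to order $1$ along $\psi_\sigma(\tau)$ need not lie in the ideal of $C^{1}(U)$ generated by $\ell^{2}$ (for $\ell=x$ the function $|x|^{3/2}$ is $C^{1}$, has vanishing $1$-jet at every point of $\{x=0\}$, yet cannot be written as $x^{2}g$ with $g$ continuous); only the implication from ideal membership to order-one vanishing is ``immediate''. Fortunately your argument does not need the false direction: you invoke it only to say that $\widetilde{\phi}_{12}(f_{\sigma_1})-\varphi^{*}_{21}(f_{\sigma_1})$ ``lies in $I_{\sigma_2}(\tau)^{2}$'', but for the subtraction step it suffices to know that this difference has vanishing $1$-jet at every point of $\tau$ (which is precisely Definition \ref{def_almost_algebraic}) and that jets are additive; ideal membership should be reserved for the polynomial difference $\widetilde{\phi}_{12}(f_{\sigma_1})-f_{\sigma_2}$, where your irreducibility/degree argument applies. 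With that rephrasing — compare $1$-jets along $\tau$ throughout, and convert to membership in $I_{\sigma_2}(\tau)^{2}$ only for polynomials — your proof coincides with the paper's.
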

 We prepare the proof of Proposition \ref{prop:relation_notions_geomeric_continuous_functions} with the following two lemmas. Similarly as before, we denote by $\bm{x}=(x_1,\dots, x_n)\in\RR^n$, and $\bm{j}=(j_1,\dots,j_n)\in\NN^n$ a multiindex, where $\bm{x}^{\bm{j}}=x_{1}^{j_{1}}\cdots x_{n}^{j_{n}}$ and $|\bm{j}|=\sum_{i=1}^nj_i$.
 \begin{lemma}\label{lemma_ec_poln}
 Let $p\in\RR^n$ and $f,\ell\in\RR[x_{1},\dots,x_{n}]$, with $\ell $ irreducible. 
 Then the following two conditions on $f$ are equivalent:
 \begin{enumerate}[(a)]
 \item the jet $J^{r}_{p}(f)=0$ whenever $\ell(p)=0$, \label{a} 
 \item $f\in\langle \ell^{r+1}\rangle$.\label{b}
 \end{enumerate}
 \end{lemma}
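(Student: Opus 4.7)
The plan is to prove the two implications separately. The direction $(b)\Rightarrow(a)$ will follow from a short Leibniz-rule calculation, while $(a)\Rightarrow(b)$ will be a contradiction argument based on factoring out the largest power of $\ell$ dividing $f$.

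For $(b)\Rightarrow(a)$, I would assume $f=\ell^{r+1}h$ for some $h\in\RR[x_{1},\dots,x_{n}]$ and apply the generalized Leibniz rule to any derivative $\partial^{\bm{j}}f$ with $|\bm{j}|\leqslant r$. Each resulting summand differentiates $\ell^{r+1}$ at most $r$ times and so retains at least one factor of $\ell$; hence $\partial^{\bm{j}}f$ lies in $\langle\ell\rangle$, vanishes at every $p$ with $\ell(p)=0$, and consequently $J^{r}_{p}(f)=0$ at every such $p$.

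For $(a)\Rightarrow(b)$, I would use that $\RR[x_{1},\dots,x_{n}]$ is a UFD and $\ell$ is irreducible to write $f=\ell^{s}g$ with $s\geqslant 0$ maximal, so $\ell\nmid g$. The goal is $s\geqslant r+1$; suppose for contradiction $s\leqslant r$. I would then select a point $p_{0}$ with $\ell(p_{0})=0$ which is a smooth point of the complex hypersurface $V_{\mathbb{C}}(\ell)$ and at which $g(p_{0})\neq 0$. At such $p_{0}$ some partial $\partial_{k}\ell(p_{0})$ is nonzero, and iterating Leibniz on $f=\ell^{s}g$ gives $\partial_{k}^{s}f(p_{0})=s!\,(\partial_{k}\ell(p_{0}))^{s}g(p_{0})\neq 0$, since every other term still carries a positive power of $\ell$ that vanishes at $p_{0}$. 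As $s\leqslant r$, this contradicts $J^{r}_{p_{0}}(f)=0$, forcing $s\geqslant r+1$ and hence $f\in\langle\ell^{r+1}\rangle$.

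The main obstacle is securing the existence of $p_{0}$: this requires that the real zero set of $\ell$ be Zariski dense in $V_{\mathbb{C}}(\ell)$, so that the smooth locus of $V_{\mathbb{C}}(\ell)$ and the complement of the zero set of $g$ (nonempty because $\gcd(\ell,g)=1$) both meet the real locus. In the paper's geometric context $\ell$ generates the ideal of an $(n-1)$-dimensional locally algebraic face, so the real zero set is $(n-1)$-dimensional and the density is automatic; I expect the proof to invoke this (otherwise an irreducible polynomial like $x_{1}^{2}+\cdots+x_{n}^{2}+1$, whose real zero locus is empty, would make (a) vacuous and the lemma would fail).
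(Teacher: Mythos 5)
Your proof is correct, but it takes a genuinely different route from the paper's for the implication (a)$\Rightarrow$(b). The paper first translates (a) into the purely algebraic statement that $\partial^{\bm{j}}f/\partial\bm{x}^{\bm{j}}\in\langle\ell\rangle$ for all $|\bm{j}|\leqslant r$, and then runs an induction on $r$ whose engine is the claim that if $f$ and all its first partials lie in $\langle\ell^{s}\rangle$ then $f\in\langle\ell^{s+1}\rangle$; this is proved by differentiating $f=g\ell^{s}$ and using that $\langle\ell\rangle$ is prime while some $\partial\ell/\partial x_{j}\neq 0$ has degree too small to lie in $\langle\ell\rangle$. You instead factor out the maximal power $f=\ell^{s}g$ once and for all and derive a contradiction from a single Leibniz evaluation $\partial_{k}^{s}f(p_{0})=s!\,(\partial_{k}\ell(p_{0}))^{s}g(p_{0})\neq 0$ at a well-chosen real point $p_{0}$; this avoids the induction but requires producing $p_{0}$ on the smooth locus with $g(p_{0})\neq 0$. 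The hypothesis you flag — that the real zero set of $\ell$ be Zariski dense, so that such a $p_{0}$ exists — is not an extra cost of your method: the paper's first reduction (``vanishing at every real zero of $\ell$ is equivalent to membership in $\langle\ell\rangle$'') silently uses exactly the same density, and indeed the lemma as literally stated fails for an irreducible $\ell$ like $x_{1}^{2}+\cdots+x_{n}^{2}+1$ with empty real zero locus. In the paper's application $\ell$ generates the ideal of a locally algebraic $(n-1)$-face, so the density holds; your making this assumption explicit is a point in favor of your write-up, whereas the paper's argument, once that reduction is granted, has the advantage of being entirely ideal-theoretic and point-free.
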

 \begin{proof}
 Since the ideal $\langle \ell^{r+1}\rangle\subseteq I(p)^{s+1}$ for all $s\leqslant r$ whenever $\ell(p)=0$, where $I(p)$ is the ideal in \eqref{eq:idealp}, then clearly \ref{b} implies \ref{a}.
 It remains to prove the converse. 
 We prove this by induction on the order of smoothness $r$.
 Note that condition \ref{a} is equivalent to saying that $\partial^{\bm{j}} f/\partial\bm{x}^{\bm{j}}(p)=0$ for all $|\bm{j}|\leqslant r$ whenever $
 \ell(p)=0$, which in turn, is equivalent to saying that $\partial^{\bm{j}} f/\partial\bm{x}^{\bm{j}}\in\langle \ell\rangle$ for all $|\bm{j}|\leqslant r$.
 
 We first prove that for any $s\geqslant 1$, if $f, {\partial f}/{\partial x_{1}},\ldots, ~{\partial f}/{\partial x_{n}}\in\langle \ell^{s}\rangle$, then $f\in\langle \ell^{s+1}\rangle$. 
 Indeed, if $f=g\ell^{s}$ for some $g\in \RR[x_{1},\dots,x_{n}]$, then ${\partial f}/{\partial x_{j}}=\bigl({\partial g}/{\partial x_{i}}\bigr)\cdot \ell^{s}+g\cdot s\ell^{s-1}\bigl({\partial \ell}/{\partial x_{j}}\bigr)$ for $j=1,\dots,n$. 
 Since $\ell$ is non-constant, then there exists at least one variable $x_{j}$ such that ${\partial \ell}/{\partial x_{j}}\neq 0$, so we have ${\partial \ell}/{\partial x_{j}}\not\in \langle \ell\rangle$. 
 Note that $\langle\ell\rangle$ is a prime ideal since $\ell$ is irreducible, so $\partial f/\partial x_{j}\in\langle \ell^{s}\rangle$ implies $g\in \langle \ell\rangle$. 
 Hence $f=g\cdot\ell^{s}\in\langle \ell^{s+1}\rangle$.
 If follows that for $r=1$, condition \ref{a} implies \ref{b}.
 
 Now assume that \ref{a} implies \ref{b} for any $r = 1,\dots,s$.
 For $j=1,\dots,n$, put $h_{j}=\partial f/\partial x_{j}$. 
 Note that for any point $p\in\RR^n$, if $J^{s+1}_{p}(f)=0$ then $J^{s}_{p}(h_{j})=0$. 
 Hence by the induction hypothesis, $h_{j}\in\langle\ell^{s}\rangle$, for $j=1,\dots,n$. 
 Since $J^{s+1}_{p}(f)=0$ implies $J^{s}_{p}(f)=0$, then applying \ref{a} to $f$ and $r=s+1$ yields $f\in\langle\ell^{s}\rangle$. 
 Thus, we have $f, ~{\partial f}/{\partial x_{1}},\ldots, ~{\partial f}/{\partial x_{n}}\in\langle \ell^{s}\rangle$. 
 Therefore, $f\in\langle\ell^{s+1}\rangle$, which completes the proof.
 \end{proof}
 
 \begin{lemma}\label{lemma:equivalent_conditions_Gr_join}
 Let $\Delta$ be a cell complex obtained from a topological surface $\mcM$, with 
 $\sigma_{1},\sigma_{2}\in\Delta_2$ two adjacent faces such that $\tau=\sigma_{1}\cap \sigma_{2}$. 
 Assume the transition map $\varphi_{{21}}$ across $\tau$ is almost algebraic, with associated algebraic transition map $\phi_{12}$, as given in \eqref{eq:atm}.
 Then, for polynomials $f\in\ambRing(\sigma_{1})$ and $g\in\ambRing(\sigma_{2})$, the following conditions are equivalent:
 \begin{enumerate}[(a)]
 \item\label{a-2} $J^{r}_{p}(f\circ\varphi_{{21}})=J^{r}_p(g)$ for each $p\in\tau$;
 \item\label{b-2}
 $\phi_{12}(\bar{f})=\bar{g}$, where $\bar{f}$ and $\bar{g}$ are the classes of $f$ and $g$ in $\ambRing(\sigma_{1})/I_{\sigma_{1}}(\tau)^{r+1}$ and $\ambRing(\sigma_{2})/I_{\sigma_{2}}(\tau)^{r+1}$, respectively.
 \end{enumerate}
 \end{lemma}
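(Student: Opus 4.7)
My plan is to use Lemma \ref{lemma_ec_poln} to convert the jet-vanishing in \ref{a-2} into the ideal membership expressed in \ref{b-2}, with the almost algebraic property of $\varphi_{21}$ (Definition \ref{def_almost_algebraic}) serving as the bridge between the two. Since $\tau$ is locally algebraic, $I_{\sigma_{2}}(\tau)$ is generated by an irreducible polynomial $\ell\in\ambRing(\sigma_{2})$, so $I_{\sigma_{2}}(\tau)^{r+1}=\langle\ell^{r+1}\rangle$. Choose a polynomial lift $\widetilde{\phi}_{12}\colon\ambRing(\sigma_{1})\to\ambRing(\sigma_{2})$ of $\phi_{12}$ and split
\begin{equation*}
g - f\circ\varphi_{21} \;=\; \bigl(g-\widetilde{\phi}_{12}(f)\bigr)+\bigl(\widetilde{\phi}_{12}(f)-f\circ\varphi_{21}\bigr).
\end{equation*}
By the almost algebraic hypothesis on $\varphi_{21}$, the second summand vanishes to order $r$ along $\psi_{\sigma_{2}}(\tau)$. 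Consequently, condition \ref{a-2}, which is equivalent to $J^{r}_{p}\bigl(g-f\circ\varphi_{21}\bigr)=0$ for every $p\in\tau$, is equivalent to the purely polynomial statement that $h := g-\widetilde{\phi}_{12}(f)\in\ambRing(\sigma_{2})$ satisfies $J^{r}_{p}(h)=0$ for every $p\in\psi_{\sigma_{2}}(\tau)$.

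The remaining step is to invoke Lemma \ref{lemma_ec_poln}. That lemma demands jet-vanishing on the entire hypersurface $V(\ell)$, whereas what we have is vanishing only on the arc $\psi_{\sigma_{2}}(\tau)\subseteq V(\ell)$. The bridge is the observation that each partial derivative $\partial^{\bm{j}} h/\partial \bm{x}^{\bm{j}}$ with $|\bm{j}|\leqslant r$ is a polynomial vanishing identically on $\psi_{\sigma_{2}}(\tau)$; since this arc is an infinite subset of the irreducible plane curve $V(\ell)\subseteq \RR^{2}$, it follows that each such partial derivative lies in $\langle\ell\rangle$, and hence the hypothesis of Lemma \ref{lemma_ec_poln} is met. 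That lemma then yields $h\in\langle\ell^{r+1}\rangle=I_{\sigma_{2}}(\tau)^{r+1}$, i.e.\ $\bar{g}=\widetilde{\phi}_{12}(f)\bmod I_{\sigma_{2}}(\tau)^{r+1}=\phi_{12}(\bar{f})$ in $\ambRing(\sigma_{2})/I_{\sigma_{2}}(\tau)^{r+1}$, which is exactly \ref{b-2}. The converse direction follows by retracing the same chain of equivalences, with Lemma \ref{lemma_ec_poln} used in its easy direction.

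The only delicate point I anticipate is the promotion from jet-vanishing on the geometric edge $\tau$ to vanishing on the full algebraic locus $V(\ell)$ required by Lemma \ref{lemma_ec_poln}; this is handled by the standard fact that a polynomial vanishing on an infinite subset of an irreducible curve vanishes on the whole curve. Beyond that, the proof is essentially the one-line splitting above, with the almost algebraic property absorbing the non-polynomial part of the transition map and Lemma \ref{lemma_ec_poln} doing the final conversion to ideal membership.
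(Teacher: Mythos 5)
Your proof is correct and follows essentially the same route as the paper: lift $\phi_{12}$, use the almost algebraic property to identify the $r$-jets of $f\circ\varphi_{21}$ and $\widetilde{\phi}_{12}(f)$ along $\tau$, and then apply Lemma \ref{lemma_ec_poln} to convert jet-vanishing of $g-\widetilde{\phi}_{12}(f)$ into membership in $I_{\sigma_2}(\tau)^{r+1}$. Your extra bridging step (infinite subset of an irreducible curve) is valid but not needed: by Definition \ref{def:ideals} the ideal $I_{\sigma_2}(\tau)=\langle\ell\rangle$ is precisely the set of polynomials vanishing on $\psi_{\sigma_2}(\tau)$, so the partial derivatives vanishing on the arc already lie in $\langle\ell\rangle$ directly.
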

 \begin{proof}
 Let
 $
 \widetilde{\phi}_{12}\colon \ambRing(\sigma_{1})\to\ambRing(\sigma_{2})
 $ be a lift of $\phi_{12}$. 
 Then \ref{b-2} holds if and only if
 \begin{equation}\label{eqn_condition_b}
 \widetilde{\phi}_{12}(f)-g\in I_{\sigma_{2}}(\tau)^{r+1}.
 \end{equation}
 Since the edge $\tau$ is locally algebraic, then by Lemma \ref{lemma_ec_poln}, a polynomial $h\in\ambRing(\sigma_{2})$ vanishes along $\tau$ up to order $r$ if and only if $h\in I_{\sigma_{2}}(\tau)^{r+1}$. 
 Therefore, equation \eqref{eqn_condition_b} is equivalent to saying that $\widetilde{\phi}_{12}(f)-g$ vanishes along $\tau$ up to order $r$. 
 Because $J^{k}_{p}\bigl(\widetilde{\phi}_{12}(f)-g\bigr)=J^{k}_{p}\bigl(\widetilde{\phi}_{12}(f)\bigr)-J^{k}_{p}(g)$, then \eqref{eqn_condition_b} is also equivalent to
 \begin{equation}\label{eqn_ec_Gk_join}
 J^{r}_{p}\bigl(\widetilde{\phi}_{12}(f)\bigr)=J^{r}_{p}(g),~\mbox{for each}~p\in\tau.
 \end{equation}
 Since
 $J^{r}_{p}\bigl(\widetilde{\phi}_{12}(f)\bigr)
 =
 J^{r}_{p}\bigl(f\circ\varphi_{{21}}\bigr)$ for any $p\in\tau$, then \eqref{eqn_ec_Gk_join} holds if and only if condition \ref{a-2} holds.
 \end{proof}
 We are now ready to prove Theorem \ref{prop:relation_notions_geomeric_continuous_functions}.
 \begin{proof}[Proof to Theorem \ref{prop:relation_notions_geomeric_continuous_functions}]
 If $(f_{\sigma})\in G^{1}(\Delta,\Phi)$, then for any pair of adjacent faces $\sigma_{1},\sigma_{2}\in\Delta_2$, the polynomials $f_{\sigma_{1}}\in\ambRing(\sigma_{1})$ and $f_{\sigma_{2}}\in\ambRing(\sigma_{2})$ satisfy condition \ref{b-2} in Lemma \ref{lemma:equivalent_conditions_Gr_join}, and hence also condition \ref{a-2}. This implies $(f_{\sigma})\in \mcS^{1}(\mcM)$. 
 Conversely, a similar argument implies $\mcS^{1}(\mcM)\subseteq G^{1}\bigl(\Delta,\Phi\bigr)$, and so we have $G^{1}_{d}(\Delta,\Phi)=\mcS^{1}_{d}(\mcM)$.
 \end{proof}

 \section{Examples of $G^r$-domains} \label{sec:Examples}
 This section is devoted to examples of $G^r$-domains $(\Delta,\Phi)$.
 Recall from Remark \ref{rem:PhiBasis} in Section \ref{section:definition_construction} that to describe a collection of algebraic transition maps $\Phi$ we only need to give the set of transition maps in $\Phi$ associated to each pair of adjacent $n$-faces in $\Delta$.
 
 \begin{example}\label{ex:gen2patches}
 Let $\Delta$ be an $n$-dimensional polyhedral complex composed by any two adjacent $n$-faces $\sigma_{1}, \sigma_{2}$ sharing a common $(n-1)$-face $\tau$, for some $n\geqslant 2$.
 Similarly as above, we may assume up to a change of coordinates that the ideals $I_{\sigma_i}(\tau)$ of $\tau$ in $\ambRing(\sigma_i)$ are given by
 \begin{alignat*}{3}
 \ambRing(\sigma_{1})&=\RR[u_{11}, \dots, u_{1n}],&\qquad \ambRing(\sigma_{2})&=\RR[u_{21},\dots, u_{2n}],\\
 I_{\sigma_{1}}(\tau)&=\langle u_{11}\rangle,
 &\quad I_{\sigma_{2}}(\tau)&=\langle u_{2n}\rangle.
 \end{alignat*} 
 From this, we see that $\tau$ is a locally algebraic face of $\Delta$. 
 A $G^r$-algebraic transition map is of the form
 \begin{align*}
 \phi_{12}\colon \RR[u_{11},\dots, u_{1n}]/\langle u_{11}^{r+1}\rangle &\to\RR[u_{21},\ldots, u_{2n}]/\langle u_{2n}^{r+1}\rangle
 \\
 \phi_{12}(u_{1i})&=
 \begin{cases}
 \sum_{j=1}^{r} u_{2n}^{j}p_{j} &\mbox{ if } i=1,\\
 u_{2(i-1)} + \sum_{j=1}^{r} u_{2n}^{j}q_{2j} &\mbox{ otherwise},
 \end{cases}
 \end{align*}
 where $q_{2j}$, $p_j$ are polynomials in $\RR[u_{21},\dots,u_{2(n-1)}]$.
 \hfill$\diamond$ 
 \end{example}
 \subsection{Algebraic transition maps associated to a topological surface}
 In the following examples we obtain $G^{1}$-domains from a topological surface and geometric transition maps as described in Section \ref{Section:Gsplines_by_differential_geometric_method}. 
 
 \begin{example}\label{ex:twoSquares}
 Let $\mcM$ be the topological surface in Figure \ref{fig:gluing_data} which is composed by two square patches $\sigma_1$ and $\sigma_2$ together with a homeomorphism between the identified edges $\tau_{i}\subseteq\sigma_i$, for $i=1,2$. 
 This example was studied in 
 \cite{blidia2017}, and we can considered it in our setting as follows. 
 As we described in Section \ref{Section:Gsplines_by_differential_geometric_method}, a 2-dimensional polyhedral complex $\Delta$ is naturally defined by $\mcM$, it consists of two square faces $\sigma_1$ and $\sigma_2$ joining along a common edge $\tau$ which arises from the homeomorphism $\tau_{1}\to \tau_{2}$.
 
 \begin{figure}[ht!]
 \centering
 \includegraphics[scale=1.5]{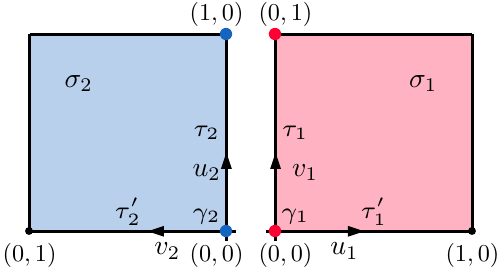}
 \caption{Topological surface $\mcM$ composed by the two squares together and a homeomorphism between the edges $\tau_2\subseteq\sigma_2$ and $\tau_1\subseteq \sigma_1$
 In Example \ref{ex:twoSquares}, the edge $\tau$, and the vertex $\gamma$, denote the identification via this homeomorphism between the edges $\tau_i$ and vertices $\gamma_i$, respectively.
 }\label{fig:gluing_data}
 \end{figure}
 Up to a change of coordinates, we may assume that the ideals $I_{\sigma_i}$ of polynomials vanishing at the edge $\tau$ in the coordinate rings $\ambRing(\sigma_i)$ (in Definition \ref{def:ideals}) are given by:
 \begin{alignat*}{3}
 \ambRing(\sigma_1)&=\RR[u_1,v_1],&\qquad
 \ambRing(\sigma_2)&=\RR[u_2,v_2], \\
 I_{\sigma_1}(\tau)&=\langle u_1\rangle,&
 I_{\sigma_2}(\tau)&=\langle v_2\rangle.
 \end{alignat*}
 In particular, because both ideals $I_{\sigma_i}$ are generated by an irreducible polynomials in $\ambRing(\sigma_i)$ then $\tau$ is locally algebraic, see Definition \ref{def:localg}. 
 Following Definition \ref{def:algebraic_transition_maps}, a $G^1$-algebraic transition map is an $\RR$-algebra homomorphism which in this case can be written as
 \begin{equation}\label{eq:gluing_data_transitionOne}
 \begin{split}
 \phi_{12}\colon \RR[u_1,v_1]/\langle u_1^{2}\rangle&\to\RR[u_2,v_2]/\langle v_2^{2}\rangle\\
 u_1&\mapsto v_2\glueb(u_2)\\
 v_1&\mapsto u_2+v_2\gluea(u_2),
 \end{split}
 \end{equation}
 where $\gluea(u_2)$ and $\glueb(u_2)$ are univariate polynomials.
 \hfill$\diamond$
 \end{example}
 \begin{remark}\label{rem:transmapsformGluingData}
 In Section \ref{Section:Gsplines_by_differential_geometric_method} we described how to obtain a $G^{r}$-domain from a topological surface. If $(\GrDomain)$ is the $G^{1}$-domain obtained from the topological surface $\mcM$ in Example \ref{ex:twoSquares}, then the polynomials from \eqref{eq:gluing_data_transitionOne} define the couple $[\gluea,\glueb]$ which in \cite{mourrain2016dimension} is called the \emph{gluing data} at the vertex $\gamma$ along the edge $\tau$. 
 In this case, we say that an algebraic transition map $\phi_{12}$ on $\Delta$ is \emph{defined from the gluing data}.
 \end{remark}

 \begin{definition}[$G^1$-gluing data]\label{def:G1_gluingdata}
 Let $\Delta$ be a $2$-dimensional polyhedral complex, and let $\sigma_1,\sigma_2$ be two adjacent $2$-faces in $\Delta$ such that $\sigma_1\cap \sigma_2=\tau\in\Delta_{1}^\circ$. 
 We write $(u_i,v_i)$ for the coordinates of $\sigma_i$ as in Figure \ref{fig:gluing_data}. 
 If the algebraic transition map $\phi_{12}$ is of the form in \eqref{eq:gluing_data_transitionOne}, then we say that $\phi_{12}$ is defined by the \emph{gluing data} $[\gluea_{12}^\tau, \glueb_{12}^\tau]$, where $\gluea_{12}^\tau,\glueb_{12}^\tau\in\RR[u_2]$.
 We refer to $[\gluea_{12}^\tau, \glueb_{12}^\tau]$, or simply $[\gluea_{12}, \glueb_{12}]$, as the gluing data from $\sigma_1$ to $\sigma_2$, or associated to $\tau$ if the order of the $n$-faces is clear from the context. 
 
 \end{definition}
 \begin{remark}
 In Definition \ref{def:G1_gluingdata}, for simplicity, we use $(u_1, v_1)$ and $(u_2,v_2)$ as relative coordinates to define the gluing data associated to an edge. 
 This choice of coordinates are related to Figure \ref{fig:gluing_data} and they are independent of the notation $X(\sigma_1)$ and $X(\sigma_2)$ when $\Delta$ has more than one interior edge.
In the cases we consider in this paper, the gluing data associated to different edges of a given partition $\Delta$ is given by the same formula once it is written in the relative coordinates $(u_1,v_1)$, $(u_2,v_2)$ for each $\tau\in\Delta_1^\circ$. 
If that is the case, we simply write $[\gluea,\glueb]$, and by an abuse of notation, we say that the collection of  $G^{1}$-algebraic transition map $\Phi$ is given by the gluing data $[\gluea,\glueb]$.
\end{remark}
 
 
 For the next example, recall that the \emph{valence} of a vertex $\gamma$ in a $2$-dimensional polyhedral cell complex $\Delta$ is the number of edges (or $1$-faces) in $\Delta$ which contain $\gamma$ as one of their vertices.
 \begin{example}[Symmetric gluing data]\label{section:symmetric_gluing_data}
 Here is an example of transition maps for $G^1$-splines between two adjacent faces of a surface using the so-called \emph{symmetric gluing data} proposed in \cite [\S 8.2]{hahn1989geometric}, see also \cite[Example 2.5]{mourrain2016dimension} where this example is studied in the setting of a topological surface and (geometric) transition maps. 
 
 Let $\tau$ be the edge in $\Delta$ with vertices $\gamma$ and $\gamma'$ which is
 shared by the two adjacent $2$-faces $\sigma_{1},\sigma_{2}\in\Delta$.
 As in Example \ref{ex:twoSquares}, we denote by $\phi_{12}$ the transition map from $\sigma_1$ to $\sigma_2$, which is given by \eqref{eq:gluing_data_transitionOne}. Take
 \begin{equation}\label{eq:gluing_data_symmetric}
 \begin{split}
 \gluea(u_2) & = 2 \cos \left(\frac{2\pi}{w}\right) h (u_2) 
 - 2\cos \left(\frac{2\pi}{w'}\right) q (u_2), \text{ and} \\
 \glueb(u_2) & = -1 , 
 \end{split} 
 \end{equation}
 where $w$ and $w'$ are the valences of the vertices $\gamma$ and $\gamma'$, respectively, and $h$ and $q$ are univariate polynomials. 
 Then, the algebraic transition map $\phi_{12}\colon \RR[u_{1},v_{1}]/\langle v_{1}^{2}\rangle\to\RR[u_{2},v_{2}]/\langle u_{2}^{2}\rangle$ with $G^1$-gluing data $[\gluea,\glueb]$ from $\sigma_1$ to $\sigma_2$ is given by
 \begin{align*}\label{eq:hahngluing}
 \phi_{12}(u_{1})& = -v_{2},\\
 \phi_{12}(v_{1})& = u_{2} + 2v_{2} \biggl(\cos \left(\frac{2\pi}{w}\right) h (u_{2}) 
 - \cos \left(\frac{2\pi}{w'}\right) q (u_{2}) \biggr).
 \end{align*}
 Assume without loss of generality that $\gamma$ and $\gamma'$ as vertices of $\sigma_2$ have coordinates $u_2=0$, $v_2=0$, and to $u_2=1$, $v_2=0$, respectively.
 To satisfy the compatibility conditions 
 required in Definition \ref{def:compatibility_conditions_alg}, we will additionally require that the functions $h$ and $q$
 interpolate $0$ and $1$. 
 Namely, we require $h(0)=1,h(1)=0,q(0) =0$ and $q(1)=1$.
 A possible solution is to take 
 \begin{equation}\label{eq:symgluedata}
 \gluea(u_2) = 2\cos \left(\frac{2\pi}{w}\right) (1-u_2)^2
 - 2\cos\left(\frac{2\pi}{w'}\right) u_2^2,\quad \text{ and } \glueb=-1.
 \end{equation}
 Then, if $\phi_{12}$ is given by $[\gluea,\glueb]$ as in \eqref{eq:symgluedata}, a good property of this kind of transition maps is that the gluing data of the inverse map $\phi_{21}$ from $\sigma_2$ to $\sigma_1$ of $\phi_{12}$ is of the same form \eqref{eq:symgluedata}. 
 Explicitly, let
 \[ u_{1}'=1-v_{1},~v_{1}'=u_{1},~u_{2}'=v_{2},~v_{2}'=1-u_{2},
 \]
 and
 \begin{equation*}
 \gluea(u_{1}')=2\cos \left(\frac{2\pi}{w'}\right) (1-u_{1}')^2
 - 2\cos\left(\frac{2\pi}{w}\right) u_{1}'^2, \text{ and } \glueb=-1.
 \end{equation*}
 Then the map $\phi_{21}\colon\RR[u_{2},v_{2}]/\langle v_{2}^{2}\rangle\to\RR[u_{1},v_{1}]/\langle u_{1}^{2}\rangle$ given by
 \begin{align*}
 \phi_{21}(u_{2}')& = v_{1}'\glueb,\\
 \phi_{21}(v_{2}')& = u_{1}' + v_{1}'\gluea(u_{1}')
 \end{align*}
 is the algebraic transition map from $\sigma_2$ to $\sigma_1$ which is the inverse of $\phi_{12}$.
 \hfill$\diamond$
 \end{example}
 
 \begin{example}[Bilinearly parametrized patches]\label{eg:bilinear}
 \begin{figure}[ht!]
 \centering
 \includegraphics[scale=1.5]{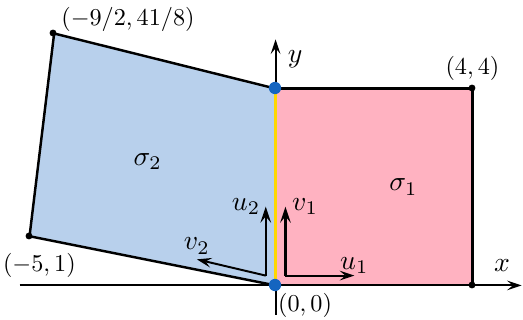}
 \caption{
 Two bilinearly parameterized patches considered in Example \ref{eg:bilinear}. 
 We embed the polyhedral complex $\Delta$ consisting of these two patches $\sigma_{1},\sigma_2$ in the $(x,y)$-plane, and write $(u_{i},v_{i})$ for the coordinate system associated to $\sigma_i$.
 }
 \label{fig:twpatch-rectangle} 
 \end{figure}
 In this example we look at the case of two quadrilateral patches, embedded in the $(x,y)$-plane (cf. Figure~\ref{fig:twpatch-rectangle}).
 The location of the patches in the $(x,y)$-plane is given by \emph{bilinear parametrizations}, which means that the inverse of each map $\psi_{i} \colon \sigma_{i}\,\to X(\sigma_{i})\cong \RR^{2}$, defined by $(x,y) \mapsto(u_{i},v_{i})$
 is described by two polynomials in $\RR[u_{i},v_{i}]_{\leqslant (1,1)}$. 
 Since in this case we always assume the image of $\psi_{i}$ is a unit square, we shall refer to $\psi_{i}^{-1} \colon [0,1]^{2}\to \sigma_{i}$ as the parametrization of the face $\sigma_i$.
 It is uniquely determined by four corner points $\psi_{i}^{-1}(0,0),\, \psi_{i}^{-1}(0,1),\, \psi_{i}^{-1}(1,0)$, $\psi_{i}^{-1}(1,1)$ of $\sigma_i$.
 Explicitly, assume $\psi_{1}^{-1}$ and $\psi_{2}^{-1}$ are defined by
 \begin{equation*}
 \psi_{1}^{-1}(u_{1},v_{1})=(a_{11}u_{1}v_{1}+a_{10}u_{1}+a_{01}v_{1}+a_{00},~b_{11}u_{1}v_{1}+b_{10}u_{1}+b_{01}v_{1}+b_{00}),
 \end{equation*} 
 and
 \begin{equation*}
 \psi_{2}^{-1}(u_{2},v_{2})=(c_{11}u_{2}v_{2}+c_{10}u_{2}+c_{01}v_{2}+c_{00},~e_{11}u_{2}v_{2}+e_{10}u_{2}+e_{01}v_{2}+e_{00}).
 \end{equation*} 
 Then 
 \begin{equation*}
 \begin{pmatrix}
 a_{11}& b_{11}& c_{11}& e_{11}\\
 a_{10}& b_{10}& c_{10}& e_{10}\\
 a_{01}& b_{01}& c_{01}& e_{01}\\
 a_{00}& b_{00}& c_{00}& e_{00}
 \end{pmatrix}=\begin{pmatrix}
 0&0&0&1\\
 0&1&0&1\\
 0&0&1&1\\
 1&1&1&1
 \end{pmatrix}^{-1}\begin{pmatrix}
 \psi_{1}^{-1}(0,0)& \psi_{2}^{-1}(0,0)\\
 \psi_{1}^{-1}(1,0)& \psi_{2}^{-1}(1,0)\\
 \psi_{1}^{-1}(0,1)& \psi_{2}^{-1}(0,1)\\
 \psi_{1}^{-1}(1,1)& \psi_{2}^{-1}(1,1)
 \end{pmatrix}.
 \end{equation*}
 By choosing coordinates, we may always assume that in the $(x,y)$-plane, the edge $\tau$ shared by the two faces lies on the line defined by $x=0$, and that $\psi_{1}^{-1}(0,0)=\psi_{2}^{-1}(0,0)=(0,0)$. 
 We may also assume $I_{\sigma_{1}}(\tau)=\langle u_{1}\rangle$, and $I_{\sigma_{2}}(\tau)=\langle v_{2}\rangle$. 
 This choice of coordinates implies 
 \begin{equation*}
 a_{01}=a_{00}=b_{00}=c_{10}=c_{00}=e_{00}=0.
 \end{equation*}
 Hence, $\psi_{1}$ defines a map
 \begin{equation*}
 \begin{split}
 \eta_{1}\colon \RR[x,y]/\langle x^{2}\rangle&\to \ambRing(\sigma_{1})/\langle u_{1}^{2}\rangle,\\
 x&\mapsto a_{11}u_{1}v_{1}+a_{10}u_{1},\\
 y&\mapsto b_{11}u_{1}v_{1}+b_{10}u_{1}+b_{01}v_{1}.
 \end{split} 
 \end{equation*}
 Similarly, $\psi_{2}$ defines a map
 \begin{equation*}
 \begin{split}
 \eta_{2}\colon \RR[x,y]/\langle x^{2}\rangle&\to \ambRing(\sigma_{2})/\langle v_{2}^{2}\rangle\\
 x&\mapsto c_{11}u_{2}v_{2}+c_{01}v_{2},\\
 y&\mapsto e_{11}u_{2}v_{2}+e_{10}u_{2}+e_{01}v_{2}.
 \end{split} 
 \end{equation*}
 If there is a map $\phi_{12}$ defined from gluing data $\gluea(u_2)$ and $\glueb(u_2)$ as in \eqref{eq:gluing_data_transitionOne}, and such that $\phi_{12}\circ\eta_{1}=\eta_{2}$, then we say \emph{the gluing data is determined by the parametrization of $\sigma_1$ and $\sigma_2$}. 
 Explicitly, $\phi_{12}\circ\eta_{1}=\eta_{2}$ means that the system of equations
 \begin{equation}\label{eq:gluing_data_bilinear}
 \begin{cases}
 (a_{11}u_{2}+a_{10})\glueb(u_{2})=c_{11}u_{2}+c_{01}\\
 (b_{11}u_{2}+b_{10})\glueb(u_{2})+b_{01}\gluea(u_{2})=e_{11}u_{2}+e_{01}
 \end{cases}
 \end{equation}
 holds. 
 Hence, the existence of such a $\phi_{12}$ is equivalent to say that \eqref{eq:gluing_data_bilinear} has a polynomial solution for both $\gluea(u_{2})$ and $\glueb(u_{2})$. 
 In this case, it forces $a_{11}=b_{11}=0$.
 
 Using this gluing data, the parametric spline functions defined over these specific quadrilaterals on the $(x,y)$-plane are geometrically continuous over the $G^{1}$-domain $(\GrDomain)$, where $\Delta$ is the cell complex composed by the two patches sharing an edge and $\Phi$ is determined by $\phi_{12}$.
 
 This construction of gluing data has been extensively studied for $G^1$-splines and isogeometric applications on planar domains 
 \cite{kapl2017,kapl2019isogeometric,KAPL201955}, and volumetric two-patch domains \cite{birner:hal-02271820,BirnerKapl19}. 
 There, the gluing data is defined using the determinants of the minors of the following $2\times 3$ matrix
 \[
 \begin{pmatrix}
 \partial_{u_1} \psi^{-1}_1 & \partial_{v_1} \psi^{-1}_1& \partial_{v_2} \psi^{-1}_2
 \end{pmatrix}.
 \]
 Explicitly,
 \begin{align*}
 \gluea (u_2)&= \bigl({ \det(\partial_{u_1} \psi^{-1} _1 , ~\partial_{v_2} \psi^{-1} _2 ) }/{ \det \nabla \psi^{-1} _1 }\bigr) |_{v_1=u_2,~u_1=v_2=0} , \text{ and}\\
 \glueb (u_2)& = \bigl(- {\det (\partial_{v_{1}}\psi_{1}^{-1},~ \partial_{v_{2}}\psi_{2}^{-1}) }/{ \det \nabla \psi^{-1} _1}\bigr)|_{v_1=u_2,~u_1=v_2=0} \,, 
 \end{align*}
 and two polynomials $g_1$ and $g_2$ are said to join with $G^1$-continuity if and only if
 \[
 \bigl(\gluea (u_2) \, \partial_{v_1} g_1 + \glueb(u_2) \, \partial_{u_1} g_1 - \partial_{v_2} g_2\bigr) |_{v_{1}=u_{2},u_{1}=v_{2}=0} = 0\,.
 \]
 One may check that this construction of gluing data is equivalent to solve the system of equations \eqref{eq:gluing_data_bilinear}.
 
 As a particular example, we consider the planar patches on the $(x,y)$-plane where $\sigma_1$ has vertices $(0,0)$, $(0,4)$, $(4,0)$, and $(4,4)$. 
 The parametrization $\psi^{-1}_1$ is given by the equation
 \[
 \psi^{-1}_1(u_1,v_1) = ( 4 u_1, \, 4v_1).
 \]
 The patch $\sigma_{2}$ is a quadrilateral with vertices $(0, 0)$, $(0,4)$, $(-5,1)$, $(-9/2,41/8)$ on the $(x,y)$-plane. 
 The parametrization $\psi_{2}^{-1}$ is given by the bilinear map
 \[
 \psi_{2}^{-1}(u_2,v_2) = \left( \frac 1 2 u_2v_2- 5 v_2, \, \frac1 8 u_2v_2+4 u_2+ v_2\right).
 \]
 The common edge between the two patches has vertices $(0, 0)$ and $(0,4)$.
 The gluing data take the form
 \[
 \gluea(u_{2}) = -\frac {1}{32} u_2 - \frac{1}{4}, \text{ and\;} 
 \quad 
 \glueb(u_{2}) = -\frac 1 8 u_2 + \frac 5 4. 
 \]
 The corresponding transition map is given by
 $\phi_{12}(u_{1})=v_2 \glueb(u_{2})$, and
 $\phi_{12}(v_{1})=u_2 + v_2\gluea(u_{2})$. \hfill$\diamond$
 \end{example}
 
 \subsection{$G^{r}$-domains obtained from differential manifolds with cellulation} \label{section:obtain_transition_map_from_mfds}
 We denote by $M$ a differential $n$-manifold with a given atlas. 
 A \emph{cellulation} of $M$ is given by a homeomorphism $\cellulation\colon\Delta\to M$, such that $\Delta$ is a cell complex.
 Assume $\Delta$ is such a cell complex, and suppose for each $\sigma\in\Delta_{n}$ there is a unique chart $U(\sigma)$ in the given atlas such that $\cellulation(\sigma)\subseteq U(\sigma)$. 
 Then a system of coordinates $X(\sigma)$ on each $\sigma\in\Delta_n$ is determined by the atlas in a natural way, and for every $(n-1)$-face $\tau$ of $\Delta$ which is common to two $n$-faces, a geometric transition map across $\tau$ is defined by the atlas. 
 We denote this collection of geometric transition maps by $\Phi_{G}$. 
 We assume that every such $(n-1)$-face $\tau$ of $\Delta$ is locally algebraic (i.e., the ideal $I_\sigma(\tau)$ is generated by an irreducible polynomial for every $n$-face $\sigma\supseteq \tau$, see Definition \ref{def:localg}), and each map in $\Phi_{G}$ is almost algebraic (see Definition \ref{def_almost_algebraic}).
 Then, from $\Phi_G$, we can construct a set of compatible algebraic transition maps $\Phi$ which lead us to a $G^{r}$-domain over $\Delta$. 
 In this case, we simply say that $\Phi$ is \emph{obtained from the atlas} of the manifold $M$, and that the $G^{r}$-domain $(\GrDomain)$ is \emph{obtained from the cellulation} of $M$.
 
 In this section we give some examples in which we start with a differential manifold and construct $G^{r}$-domain. 
 In the first example, we consider an \emph{affine manifold} i.e., a differential manifold admitting an atlas where all transition maps are affine \cite{gu_manifold_2006}.
 \begin{example}\label{eg:affine_manifolds}
 Let $\Delta$ be a $1$-dimensional simplicial complex given by
 \begin{equation*}
 \Delta=\bigl\{\emptyset, [1], [2], [3], [12], [23], [13]\bigr\}.
 \end{equation*}
 A geometric realization of $\Delta$ is homeomorphic to a circle, as illustrated in Figure \ref{fig:circle}. 
 We denote the vertices by $[i]=\gamma_i$, for $i=1,2,3$, and the maximal faces are given by $\tau_{1}=[12]$, $\tau_{2}=[23]$, and $\tau_{3}=[13]$. 
 We choose coordinates $\psi_{i}\colon\tau_{i}\to X(\tau_{i})\cong\RR^{1}$, where each point $p\mapsto u_{i}(p)$ and such that $\psi_{i}(\tau_{i})=[0,1]$ on $X(\tau_{i})$, for $i=1,2,3$. 
 Then, the maps
 \begin{equation}\label{eqn:affine_alg_trans_maps_circle}
 \begin{split}
 \phi_{12}\colon\RR[u_{1}]/\langle (u_{1}-1)^{2}\rangle&\to \RR[u_{2}]/\langle u_{2}^{2}\rangle\\
 u_{1}&\mapsto u_{2}+1,\\
 \phi_{23}\colon\RR[u_{2}]/\langle (u_{2}-1)^{2}\rangle&\to \RR[u_{3}]/\langle u_{3}^{2}\rangle\\
 u_{2}&\mapsto u_{3}+1,\\
 \phi_{31}\colon \RR[u_{3}]/\langle (u_{3}-1)^{2}\rangle&\to \RR[u_{1}]/\langle u_{1}^{2}\rangle\\
 u_{3}&\mapsto u_{1}+1. 
 \end{split} 
 \end{equation}
 together with the identification map $\phi_{ii}$ on each $\tau_{i}$ form a collection $\Phi$ of algebraic transition maps satisfying the compatibility conditions in Definition \ref{def:compatibility_conditions_alg}. 
 Thus we get a $G^{1}$-domain $(\Delta,\Phi)$.
 
 We can see this $G^{1}$-domain $(\Delta,\Phi)$ as obtained from a cellulation of a differential $1$-fold as follows. 
 On $X(\tau_i)$, at each end point $\psi(\gamma_j)$ of the interval $\psi_{i}(\tau_{i})$, we take $U_{\gamma_j,\tau_i}$ to be a neighborhood of $\psi(\gamma_j)$ of radius ${1}/{4}$. 
 If $V_i=\psi_{i}(\tau_{i})\cup_{\gamma_j\subseteq\tau_{i}} U_{\gamma_j,\tau_{i}}$
 Then $\bigl\{(V_{i},\psi_{i})\colon i=1,2,3\bigr\}$ is an atlas of the circle, with transition maps 
 $\varphi_{12}\colon U_{\gamma_2,\tau_{2}}\to U_{\gamma_2,\tau_{1}}$, 
 $\varphi_{23}\colon U_{\gamma_3,\tau_{3}}\to U_{\gamma_3,\tau_{2}}$, and 
 $ \varphi_{31}\colon U_{\gamma_1,\tau_{1}}\to U_{\gamma_1,\tau_{3}}$, all defined by 
 $a\mapsto a-1$.
 Each of these maps $\varphi_{ij}$ is affine and therefore they are almost algebraic. 
 Thus, the maps in \eqref{eqn:affine_alg_trans_maps_circle} can be viewed as obtained from the atlas $\bigl\{(V_{i},\psi_{i})\colon i=1,2,3\bigr\}$.
 \hfill$\diamond$
 \end{example}
 Since a torus is also an affine manifold, a construction similar to that in Example \ref{eg:affine_manifolds} applies to a torus. 
 Next, we present an example in which the manifold is not affine.
 \begin{example}\label{eg:sphere_stereographic}
 \begin{figure}
 \centering
 \begin{subfigure}{0.45\textwidth} 
 \includegraphics{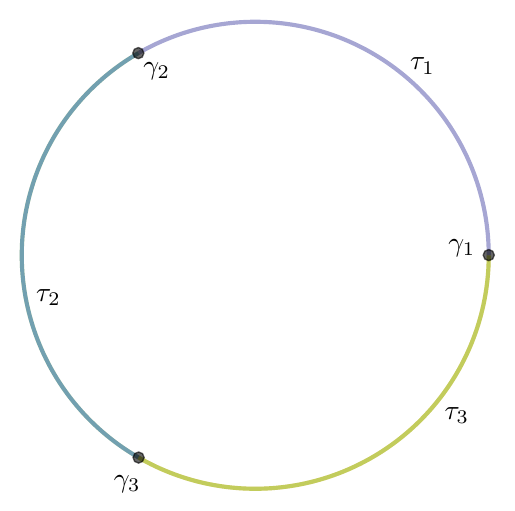}
 \caption{Cellulation of a circle by dividing it with three vertices. 
  In Example \ref{eg:affine_manifolds}, we use this cellulation to obtain a collection of algebraic transition maps and a $G^{1}$-domain over the circle.} \label{fig:circle} 
 \end{subfigure}
 \hspace{1em}
 \begin{subfigure}{0.45\textwidth}
 \includegraphics{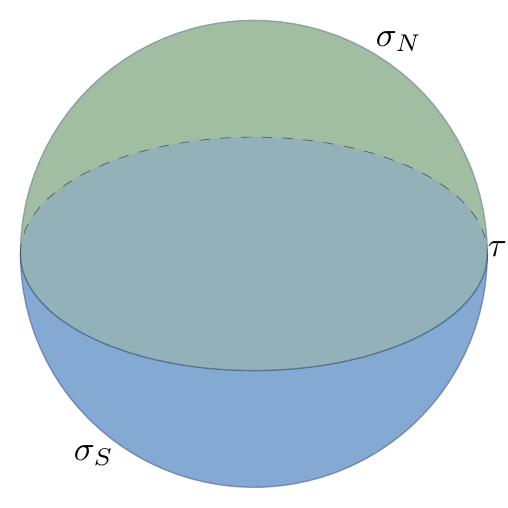}
 \caption{Cellulation of a sphere by dividing it with its equator. 
  In Example \ref{eg:sphere_stereographic}, we use this cellulation to obtain a collection of algebraic transition maps and a $G^{r}$-domain over the sphere.} \label{fig:sphere} 
 \end{subfigure} 
 \label{fig:sphereEx}
 \caption{Examples of differential manifolds with cellulation.}
 \end{figure}
 Let $M$ be a unit sphere in $\RR^3$. 
 We denote the north and south poles of $M$ by $N=(0,0,1)$ and $S=(0,0,-1)$, respectively,
 and consider the atlas consisting of two charts $M\setminus\{N\}$, and $M\setminus\{S\}$. 
 The coordinate system on each of these charts is given by the stereographic projection from $N$ and $S$, respectively.
 We consider a cell complex $\Delta$ given by this sphere divided by the $1$-cell $\tau$ corresponding to the points in the plane $z=0$, as in Figure \ref{fig:sphere}. 
 Let $\sigma_{S}$ and $\sigma_{N}$ be the 2-faces of $\Delta$ which correspond to the points of $M$ with $z\leqslant 0$ (the south hemisphere) and $z\geqslant 0$ (the north hemisphere), respectively. 
 Notice that each of these two 2-faces falls on only one of the charts of the atlas.
 Explicitly, we may consider $\sigma_{S}$ and $\sigma_{N}$ 
 in the coordinates $X(\sigma_{N})\cong\RR^{2}$ and
 $X(\sigma_{S})\cong\RR^{2}$ via the maps
 \begin{align*}
 \psi_{S}\colon \sigma_{S}&\to X(\sigma_{S})\\
 (x,y,z)&\mapsto(u_{S},v_{S}) =\left(\frac{x}{1-z},\frac{y}{1-z}\right),
 \intertext{and}
 \psi_{N}\colon\sigma_{N}&\to X(\sigma_{N})\\
 (x,y,z)&\mapsto (u_{N},v_{N})=\left(\frac{x}{1+z},\frac{y}{1+z}\right).
 \end{align*}
 The $1$-face $\tau$ shared by $\sigma_S$ and $\sigma_N$ is locally algebraic (see Definition~\ref{def:localg}), because if 
 $w_{S}=1-u_{S}^{2}-v_{S}^{2}$ and $w_{N}=1-u_{N}^{2}-v_{N}^{2}$, then $I_{\sigma_S}(\tau)=\langle w_S\rangle$ and $I_{\sigma_N}(\tau)=\langle w_N\rangle$, respectively. 
 Then, the map 
 \begin{align*}
 \phi_{SN}\colon \ambRing(\sigma_{S})/\langle w_{S}^{r+1}\rangle&\to\ambRing(\sigma_{N})/\langle w_{N}^{r+1}\rangle\\
 u_{S}&\mapsto u_{N}(1+w_{N}+\dots+w_{N}^{r}),\\
 v_{S}&\mapsto v_{N}(1+w_{N}+\dots+w_{N}^{r})
 \end{align*}
 is an algebraic transition map, which can be viewed as obtained from the given atlas of $M$. 
 The map $\phi_{SN}$, its inverse and the two identity maps on $\sigma_S$ and $\sigma_N$ form a collection of compatible algebraic transition maps $\Phi$, and so we obtain a $G^{r}$-domain $\bigl(\Delta, \Phi\bigr)$ from a cellulation of a sphere covered by the stereographic charts.
 \hfill$\diamond$
 \end{example}
 
 \section{Splines on 2-dimensional $G^r$-domains}\label{section:spline_complex_dim2}
 Throughout this section we assume $\Delta$ is a $2$-dimensional cell complex, and we take a collection $\Phi$ of compatible $G^r$-algebraic transition maps on $\Delta$, for some $r\geqslant 0$. 
 We will use the chain complex $(\quotientSpace_{d,\bullet},\delta)$ introduced in \eqref{eq:quotientQ} in Section \ref{section_BSS_deg_d} to study the dimension and construction of bases of the space $G^r_d(\Delta,\Phi)$ of splines on a $G^r$-domain $(\Delta,\Phi)$, for a given degree $d\geqslant 0$.

 Recall that by Proposition \ref{prop:Grd_is_top_homology_of_Qd} we have that $ G^{r}_{d}(\GrDomain)= H_{2}(\quotientSpace_{d,\bullet})$.
 Thus, if $\Delta$ is a $2$-dimensional cell complex, we have
 \begin{equation}\label{eq:dim2cellcase}
 \dim G^{r}_{d}(\GrDomain)=\chi(\quotientSpace_{d,\bullet})+\dim H_{1}(\quotientSpace_{d,\bullet})-\dim H_{0}(\quotientSpace_{d,\bullet}), 
 \end{equation}
 where 
 \begin{equation}\label{eq:Euler_char_Qd}
 \chi(\quotientSpace_{d,\bullet})=\sum_{\sigma\in\Delta_{2}}\dim \quotientSpace_{d}(\sigma)-\sum_{\tau\in\Delta_{1}^{\circ}}\dim\quotientSpace_{d}(\tau)+\sum_{\gamma\in\Delta_{0}^{\circ}}\dim\quotientSpace_{d}(\gamma), 
 \end{equation}
 is the Euler characteristic of the complex $\bigl(\quotientSpace_{d,\bullet},\delta\bigr)$. 
 
 A first estimate of $\dim G^{r}_{d}(\GrDomain)$ arises from considering 
 $\chi(\quotientSpace_{d,\bullet})$ in \eqref{eq:Euler_char_Qd}. 
 In the following, we compute the terms of $ \chi(\quotientSpace_{d,\bullet})$ when $r=1$ and the algebraic transition maps on $\Delta$ are defined from some gluing data (see Remark \ref{rem:transmapsformGluingData}). 
 The homology terms $\dim H_{1}(\quotientSpace_{d,\bullet}\bigr)$ and $\dim H_{0}(\quotientSpace_{d,\bullet})$ in \eqref{eq:dim2cellcase} are not necessarily zero, we consider them in Section \ref{section:explicit_computations}.
 
 Similarly as above, we write $(u_i,v_i)$ for the the coordinates of $X(\sigma_i)$, for a maximal face of $\Delta$, in this case a 2-face $\sigma_i\in\Delta_2$.
 If $\sigma_1\cap\sigma_2=\tau\in\Delta_1$ is an edge shared by two faces $\sigma_{1},\sigma_{2}\in\Delta_2$, the the tensor product ring in \eqref{eq:ringalpha} is simply $\ambRing(\tau)=\ambRing(\sigma_{1})\otimes\ambRing(\sigma_{2})$, and the subset of polynomials without mixed terms \eqref{eq:ringT} can be written as
 \begin{equation*}
 \totalSpace_{d}(\tau)=\bigl\{f_{1}+f_{2}\in\ambRing(\tau)\colon f_{i}\in\ambRing_d(\sigma_{i})\bigr\}.
 \end{equation*}
 The quotient 
 \eqref{eq:quotientQ} is in this case
 $\quotientSpace_{d}(\tau)=\totalSpace_{d}(\tau)/\subSpace_{d}(\tau)$, 
 where $\subSpace_{d}(\tau)=\idealComplex(\tau)\cap \totalSpace_{d}(\tau)$ and 
 \begin{equation*}
 \idealComplex(\tau)=
 \bigl\langle u_{1}-\widetilde{\phi}_{12}(u_{1}),~v_{1}-\widetilde{\phi}_{12}(v_{1})\bigr\rangle
 + 
 I_{\sigma_{1}}(\tau)^{r+1}\cdot\ambRing(\tau)+I_{\sigma_{2}}(\tau)^{r+1}\cdot\ambRing(\tau),
 \end{equation*}
 for a lift $\widetilde{\phi}_{12}$ is of the $G^r$-algebraic transition map $\phi_{12}$ from $\sigma_1$ to $\sigma_2$.
 (See \eqref{eq:idealtau} and \eqref{eq:idealJ} for the general definition of $\idealComplex(\tau)$ and $\subSpace_{d}(\tau)$, respectively.)
 Notice that to obtain a basis for $\subSpace_{d}(\tau)$, one needs to compute the kernel of a natural map of vector spaces
 \begin{equation*}
 \totalSpace_{d}(\tau)\oplus\idealComplex(\tau)\to\ambRing(\tau).
 \end{equation*} 
 \begin{lemma}\label{lemma:Qd_tau_two_patches}
 Let $\tau=\sigma_1\cap\sigma_2\in\Delta_1$ be an edge shared by two faces $\sigma_{1},\sigma_{2}\in\Delta_2$, and let 
 $\phi_{12}$ is a $G^{1}$-algebraic transition map on $\Delta$ given by symmetric gluing data $[\gluea,\glueb]$ as in \eqref{eq:gluing_data_symmetric}. 
 If $d_{\gluea}=\deg \gluea\geqslant 1$, then
 \begin{enumerate}[(a)]
 \item\label{casea} in the total degree case, if $d\geqslant d_{\gluea}+1$, we have
 \begin{equation*}
 \dim\quotientSpaceOne_{d}(\tau)=2d+d_{\gluea}+1, \text{ and } \dim \subSpaceOne_{d}(\tau)=d^{2}+d-d_{\gluea};
 \end{equation*}
 a basis of $\subSpaceOne_{d}(\tau)$ is given by
 \begin{equation}\label{eq:a_basis_of_Id_tau_totaldegree}
 \begin{cases}
  u_{1}^{i}v_{1}^{j},\; u_{2}^{j}v_{2}^{i}, &\mbox{for}~2\leqslant i\leqslant d~\mbox{and}~i+ j\leqslant d,\\
  u_{1}v_{1}^{i}+u_{2}^{i}v_{2},&\mbox{for}~0\leqslant i\leqslant d-1,\\
  v_{1}^{i}-\bigl(u_{2}^{i}+i\, u_{2}^{i-1}v_{2}\,\gluea(u_{2})\bigr), &\mbox{for}~1\leqslant i \leqslant d-d_{\gluea}.
 \end{cases}
 \end{equation}
 \item\label{caseb} For bidegree $(d,d)$ with $d\geqslant d_{\gluea}$, we have
 \begin{equation*}
 \dim\quotientSpaceOne_{(d,d)}(\tau)=2d+d_{\gluea}+1, \text{ and } \dim \subSpaceOne_{(d,d)}(\tau)=2d^{2}+2d-d_{\gluea};
 \end{equation*}
 a basis of $\subSpaceOne_{(d,d)}(\tau)$ is given by
 \begin{equation}\label{eq:bidbasisJtau}
 \begin{cases}
  u_{1}^{i}v_{1}^{j},\; u_{2}^{j}v_{2}^{i},\; u_{1}v_{1}^{j}+u_{2}^{j}v_{2}, &\mbox{for}~2\leqslant i\leqslant d~\mbox{and}~0\leqslant j\leqslant d,\\
  v_{1}^{i}-(u_{2}^{i}+i\, u_{2}^{i-1}v_{2}\gluea(u_{2})),&\mbox{for}~1\leqslant i\leqslant d+1-d_{\gluea}.
 \end{cases}
 \end{equation}
 \end{enumerate}
 \end{lemma}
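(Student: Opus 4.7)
The plan is to apply Lemma \ref{lemma:a_basis_of_Fd} with the ambient ring $S = \ambRing(\tau) = \RR[u_1, v_1, u_2, v_2]$, the ideal $I = \idealComplexOne(\tau)$, and the subspace $W$ equal to $\totalSpace_d(\tau)$ in part \ref{casea} or $\totalSpace_{(d,d)}(\tau)$ in part \ref{caseb}. This identifies $\quotientSpaceOne_d(\tau)$ (respectively $\quotientSpaceOne_{(d,d)}(\tau)$) with the image of $W$ inside the quotient $\ambRing(\tau)/\idealComplexOne(\tau)$. Since the symmetric gluing data give $\glueb=-1$ and $\gluea=\gluea(u_2)$, the defining generators of $\idealComplexOne(\tau)$ are $u_1+v_2$, $v_1-u_2-v_2\gluea(u_2)$, $u_1^2$, and $v_2^2$; one immediately sees that $\ambRing(\tau)/\idealComplexOne(\tau)$ is isomorphic to $\RR[u_2,v_2]/\langle v_2^2\rangle$ via $u_1\mapsto -v_2$ and $v_1\mapsto u_2+v_2\gluea(u_2)$.

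First I would compute the reductions in $\RR[u_2,v_2]/\langle v_2^2\rangle$ of every monomial appearing in $\ambRing_d(\sigma_1)\oplus\ambRing_d(\sigma_2)$ (respectively of bidegree at most $(d,d)$). Monomials $u_1^iv_1^j$ and $u_2^jv_2^i$ with $i\ge 2$ vanish; the rest reduce to combinations of the \emph{base} directions $u_2^k$ and $v_2 u_2^k$, plus the extra contributions $j\,v_2 u_2^{j-1}\gluea(u_2)$ coming from the images of $v_1^j$. A short bookkeeping gives that the base supplies $2d+1$ or $2d+2$ directions in the total-degree or bidegree case respectively (the difference being whether $v_2u_2^d$ is available), and that the new directions provided by the $\gluea$-contributions are exactly those with leading $u_2$-exponent $j-1+d_\gluea$ exceeding the base range, contributing $d_\gluea$ or $d_\gluea-1$ extra directions respectively. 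In both cases the total is $2d+d_\gluea+1$, and combined with $\dim\totalSpace_d(\tau)=(d+1)(d+2)-1$ and $\dim\totalSpace_{(d,d)}(\tau)=2(d+1)^2-1$ (the $-1$ reflecting the single-constant overlap between the two $n$-face rings), this yields the stated dimensions of $\subSpaceOne_d(\tau)$ and $\subSpaceOne_{(d,d)}(\tau)$.

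To verify that the families listed in \eqref{eq:a_basis_of_Id_tau_totaldegree} and \eqref{eq:bidbasisJtau} are bases, I would proceed in three steps. First, a direct degree check using $d\ge d_\gluea+1$ (respectively $d\ge d_\gluea$) shows each listed element lies in $\totalSpace_d(\tau)$ (respectively $\totalSpace_{(d,d)}(\tau)$). Second, each element maps to zero modulo $\idealComplexOne(\tau)$ by substituting $u_1\mapsto -v_2$, $v_1\mapsto u_2+v_2\gluea(u_2)$, and reducing modulo $v_2^2$, so it belongs to $\subSpaceOne$. Third, the size of each list, $d^2+d-d_\gluea$ and $2d^2+2d-d_\gluea$, matches $\dim\subSpaceOne$ computed above. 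Linear independence then follows by splitting any vanishing combination into its $\ambRing(\sigma_1)$-part and its $\ambRing(\sigma_2)$-part: since the variable sets $\{u_1,v_1\}$ and $\{u_2,v_2\}$ are disjoint, the two parts must each reduce to an opposing constant in $\RR$, but all monomials appearing on each side have positive degree, forcing every coefficient to vanish.

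The hardest part I expect is the combinatorial bookkeeping in the dimension count of the image in $\RR[u_2,v_2]/\langle v_2^2\rangle$, and in particular the balance between the total-degree and bidegree cases: the bidegree setting enlarges the base by one direction ($v_2 u_2^d$) but reduces by one the number of truly new $\gluea$-contributed directions, these adjustments cancelling exactly so that both cases give the same formula $\dim\quotientSpaceOne=2d+d_\gluea+1$. Once this is pinned down, all remaining verifications reduce to elementary substitution and a monomial-independence argument.
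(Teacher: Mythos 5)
Your proposal is correct and follows essentially the same route as the paper: both apply Lemma \ref{lemma:a_basis_of_Fd} to identify $\quotientSpaceOne_{d}(\tau)$ (resp.\ $\quotientSpaceOne_{(d,d)}(\tau)$) with the image of $\totalSpace_{d}(\tau)$ in $\ambRing(\tau)/\idealComplexOne(\tau)\cong\RR[u_2,v_2]/\langle v_2^2\rangle$, compute the normal forms of the monomials (your substitution $u_1\mapsto -v_2$, $v_1\mapsto u_2+v_2\gluea(u_2)$ is exactly the paper's remainder computation for the elimination order), count to get $2d+d_\gluea+1$, deduce $\dim\subSpaceOne$ by subtraction, and check membership, cardinality and independence of the listed families. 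The only cosmetic difference is your independence argument via splitting into the $\ambRing(\sigma_1)$- and $\ambRing(\sigma_2)$-parts instead of the paper's distinct-leading-term argument; both are valid.
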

 \begin{proof}
 Let $\succ$ be the elimination order $u_{1}\succ v_{1}\succ u_{2}\succ v_{2}$.
 If we take a monomial basis $\mcB$ of $\totalSpace_{d}(\tau)$, then Lemma \ref{lemma:a_basis_of_Fd} implies that the space $V=\spanset\{\mathfrak{r}(b)\colon b\in \mcB\}$ is isomorphic to $\quotientSpaceOne_{d}(\tau)$. 
 Note that
 \begin{align*}
 \mathfrak{r}(u_{1}^{i}v_{1}^{j}) & =
 \begin{cases}
 0, &\mbox{if }~i\geqslant 2,\\
 u_{2}^{j}v_{2},&\mbox{if }~i=1,\\
 u_{2}^{j}+j\, u_{2}^{j-1}v_{2}\gluea(u_{2}),&\mbox{if }~i=0,
 \end{cases}\\
 \intertext{and,}
 \mathfrak{r}(u_{2}^{i}v_{2}^{j})&=
 \begin{cases}
 0,& \mbox{for}~j\geqslant 2,\\
 u_{2}^{i}v_{2}^{j},&\mbox{for}~j=0,1.
 \end{cases}
 \end{align*}
 Therefore, by an induction on $d_{\gluea}$, one may verify that either for total degree $d\geqslant d_{\gluea}+1$, or for bidegree $(d,d)$ with $d\geqslant d_{\gluea}$, a basis for $V$ is given by
 \begin{equation*}
 \bigl\{u_{2}^{i}\colon i=0,\dots,d\bigr\}\cup\bigl\{u_{2}^{i}v_{2}\colon i =0,\dots, d-1+d_{\gluea}\bigr\}.
 \end{equation*}
 Hence $\dim V=2d+d_{\gluea}+1$, and this proves the dimension formulas for $\quotientSpaceOne_{d}(\tau)$ and $\quotientSpaceOne_{(d,d)}(\tau)$ in \ref{casea} and \ref{caseb}, respectively.
 Since $\quotientSpaceOne_{d}(\tau)=\totalSpace_{d}(\tau)/\subSpaceOne_{d}(\tau)$, then the formulas for $\dim\subSpaceOne_{d}(\tau)$ in \ref{casea} can be deduced from that of $\dim\quotientSpaceOne_{d}(\tau)$. 
 It is easy to verify that every element in \eqref{eq:a_basis_of_Id_tau_totaldegree} is in $\subSpaceOne_{d}(\tau)$. 
 Because they have distinct leading terms with respect to the monomial order, so they are linearly independent. 
 Hence, they form a basis for $\subSpaceOne_{d}(\tau)$. 
 The same reasoning can be applied to the bidegree $(d,d)$ case.
 \end{proof}
 We still have to deal with the case where $\gluea$ is a constant. 
 If the transition map is given by symmetric gluing data, this is equivalent to saying that the transition map can be lifted to an invertible affine map. 
 The intuition is, if the transition map is affine, then the domain is ``locally planar", in the sense of the following lemma.
 \begin{lemma}\label{lemma:locally_affine_transition_2patches}
 For adjacent faces $\sigma_{1}$ and $\sigma_{2}$, if the $G^{r}$-algebraic transition map $\phi_{12}$ is invertible and can be lifted to an invertible affine map $\widetilde{\phi}_{12}\colon\ambRing(\sigma_{1})\to\ambRing(\sigma_{2})$, then its restriction to $\ambRing_{d}(\sigma_{1})$ is $\widetilde{\phi}_{12}\colon\ambRing_{d}(\sigma_{1})\to\ambRing(\sigma_{2})$ which makes the following diagram commute:
 \begin{equation}\label{eq:commutative_diagram_for_affine_transitions}
 \begin{tikzcd}
 {\mathcal{R}_{d}(\sigma_{1})\oplus\mathcal{R}_{d}(\sigma_{1})} \arrow[d, "\id\oplus\widetilde{\phi}_{12}"] \arrow[r, "\partial_{2}"] & {\mathcal{R}_{d}(\sigma_{1})/I_{\sigma_{1}}(\tau)^{r+1}\cap\mathcal{R}_{d}(\sigma_{1})} \arrow[d, "\eta"] \\
 {\mathcal{R}_{d}(\sigma_{1})\oplus\mathcal{R}_{d}(\sigma_{2})} \arrow[r, "\delta_{2}"] & {\mathcal{Q}^{r}_{d}(\tau)}, 
 \end{tikzcd}
 \end{equation} 
 where $\partial_{2}=\begin{bmatrix}
 1&-1
 \end{bmatrix}$ and $\eta$ is induced by the natural inclusion $\ambRing(\sigma_{1})\to\ambRing(\tau)$.
 Moreover, $\eta$ is an isomorphism and $\ker \partial_{2}\cong\ker \delta_{2}$.
 \end{lemma}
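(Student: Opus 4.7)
The plan is to reduce everything to a single clean ring-theoretic identification, namely $\ambRing(\tau)/\idealComplex(\tau)\cong\ambRing(\sigma_{2})/I_{\sigma_{2}}(\tau)^{r+1}$, and then harvest the three claims of the lemma from it. First I would observe that since $\widetilde{\phi}_{12}$ is affine and invertible, it preserves total degree and restricts to a linear isomorphism $\ambRing_{d}(\sigma_{1})\to\ambRing_{d}(\sigma_{2})$. Hence $\id\oplus\widetilde{\phi}_{12}$ is an isomorphism of vector spaces, and once the square is shown to commute and $\eta$ is shown to be an isomorphism, restricting the outer square to kernels yields $\ker\partial_{2}\cong\ker\delta_{2}$ automatically. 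So the real work is in those two intermediate claims.

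For the isomorphism $\eta$, the key step is that the subideal $\langle u_{1i}-\widetilde{\phi}_{12}(u_{1i}):i=1,\dots,n\rangle\subseteq\idealComplex(\tau)$ exhibits $\ambRing(\tau)$ modulo this piece as the graph of $\widetilde{\phi}_{12}$, which identifies it with $\ambRing(\sigma_{2})$ by sending $u_{1i}\mapsto\widetilde{\phi}_{12}(u_{1i})$. Under this identification, $\widetilde{\phi}_{12}$ being an invertible affine lift of the $G^{r}$-algebraic transition map $\phi_{12}$ sends $I_{\sigma_{1}}(\tau)$ onto $I_{\sigma_{2}}(\tau)$ (this uses that $\tau$ is locally algebraic, so both ideals are principal and irreducible, and the condition $\phi_{12}^{-1}(I_{\sigma_{2}}(p))=I_{\sigma_{1}}(p)$ at each point $p\in\tau$ forces $\widetilde{\phi}_{12}(I_{\sigma_{1}}(\tau))=I_{\sigma_{2}}(\tau)$). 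The two summands $I_{\sigma_{1}}(\tau)^{r+1}\cdot\ambRing(\tau)$ and $I_{\sigma_{2}}(\tau)^{r+1}\cdot\ambRing(\tau)$ of $\idealComplex(\tau)$ therefore collapse to the single ideal $I_{\sigma_{2}}(\tau)^{r+1}$ after the quotient, yielding $\ambRing(\tau)/\idealComplex(\tau)\cong\ambRing(\sigma_{2})/I_{\sigma_{2}}(\tau)^{r+1}$. Injectivity of $\eta$ then follows because any $f\in\ambRing_{d}(\sigma_{1})\cap\idealComplex(\tau)$ satisfies $\widetilde{\phi}_{12}(f)\in I_{\sigma_{2}}(\tau)^{r+1}$, hence $f\in I_{\sigma_{1}}(\tau)^{r+1}\cap\ambRing_{d}(\sigma_{1})$. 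Surjectivity follows by noting that for any representative $f+g\in\totalSpace_{d}(\tau)$ with $g\in\ambRing_{d}(\sigma_{2})$, the element $g-\widetilde{\phi}_{12}^{-1}(g)$ lies in $\langle u_{1i}-\widetilde{\phi}_{12}(u_{1i})\rangle\subseteq\idealComplex(\tau)$ and also in $\totalSpace_{d}(\tau)$ because $\widetilde{\phi}_{12}^{-1}$ preserves degree $\leqslant d$; thus $f+g\equiv f+\widetilde{\phi}_{12}^{-1}(g)\in\ambRing_{d}(\sigma_{1})$ modulo $\subSpace_{d}(\tau)$.

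Commutativity of the square is then a short diagram chase: clockwise, $(f_{1},f_{2})\mapsto\overline{f_{1}-f_{2}}$ is pushed through $\eta$; counter-clockwise, $(f_{1},f_{2})\mapsto(f_{1},\widetilde{\phi}_{12}(f_{2}))\mapsto\overline{f_{1}-\widetilde{\phi}_{12}(f_{2})}$, and the two classes agree because $f_{2}-\widetilde{\phi}_{12}(f_{2})\in\langle u_{1i}-\widetilde{\phi}_{12}(u_{1i})\rangle\cap\totalSpace_{d}(\tau)\subseteq\subSpace_{d}(\tau)$. The main obstacle I anticipate is the step $\widetilde{\phi}_{12}(I_{\sigma_{1}}(\tau))=I_{\sigma_{2}}(\tau)$: promoting the pointwise compatibility from Definition \ref{def:algebraic_transition_maps} to an equality of full vanishing ideals requires the affine hypothesis and the local algebraicity of $\tau$, and once this is in place the rest of the argument is formal.
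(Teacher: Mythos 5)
Your proposal is correct and follows essentially the same route as the paper: surjectivity of $\eta$ by substituting the affine inverse into the $\sigma_{2}$-part of a representative in $\totalSpace_{d}(\tau)$, commutativity by the same one-line computation, and $\ker\partial_{2}\cong\ker\delta_{2}$ from the vertical isomorphisms. The one place you go beyond the paper is injectivity/well-definedness: the paper simply asserts $I_{\sigma_{1}}(\tau)^{r+1}=\idealComplex(\tau)\cap\ambRing(\sigma_{1})$, whereas you derive it from the graph-ideal identification $\ambRing(\tau)/\idealComplex(\tau)\cong\ambRing(\sigma_{2})/I_{\sigma_{2}}(\tau)^{r+1}$ together with $\widetilde{\phi}_{12}\bigl(I_{\sigma_{1}}(\tau)\bigr)=I_{\sigma_{2}}(\tau)$, and your closing remark is apt — that ideal equality is most cleanly obtained from local algebraicity (irreducible principal generators, so divisibility forced by $\widetilde{\phi}_{12}(f_{1})^{r+1}\in\langle f_{2}^{r+1}\rangle$ plus invertibility of the affine substitution gives $\widetilde{\phi}_{12}(f_{1})=cf_{2}$) rather than from the pointwise condition alone.
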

 \begin{proof}
 First of all, since $I_{\sigma_{1}}(\tau)^{r+1}=\idealComplex(\tau)\cap\ambRing(\sigma_{1})$, then $\eta$ is well-defined and injective. 
 To check that $\eta$ is also surjective, note that $\phi_{12}$ is invertible and its inverse $\phi_{21}$ is also affine. 
 If $h(u_{1},v_{1},u_{2},v_{2})\in \totalSpace_{d}(\tau)$, then $h\bigl(u_{1},v_{1},\phi_{21}(u_{2}),\phi_{21}(v_{2})\bigr)\in\ambRing_{d}(\sigma_{1})$ gives a preimage of $h(u_{1},v_{1},u_{2},v_{2})$. Thus, $\eta$ is surjective.
 
 Assume $(f,g)\in\ambRing_{d}(\sigma_{1})\oplus\ambRing_{d}(\sigma_{1})$. Denote by $\overline{f}$ the image of $f$ in the quotient ring $\ambRing(\sigma_{1})/I_{\sigma_{1}}(\tau)^{r+1}$. Then $\delta_{2}\circ(\id\oplus\widetilde{\phi}_{12})(f,g)=\overline{f}-\phi_{12}(\overline{g})=\eta\circ\partial_{2}(f,g)$. Hence, diagram \eqref{eq:commutative_diagram_for_affine_transitions} commutes.
 
 Both $\partial_{2}$ and $\delta_{2}$ in \eqref{eq:commutative_diagram_for_affine_transitions} are surjective and all vertical maps are isomorphisms. Therefore, we conclude that $\ker \partial_{2}=\ker\delta_{2}$.
 \end{proof} 
 A dimension formula for $\quotientSpaceOne_{d}(\tau)$, and hence for $\dim\subSpaceOne_{d}(\tau)$, when $\deg\gluea < 1$ follows as a corollary of Lemma \ref{lemma:locally_affine_transition_2patches}.
 \begin{corollary}\label{Cor:Qd_tau_two_patches_planar}
 Assume the transition map $\phi_{12}$ from $\sigma_1$ to $\sigma_2$ is given by symmetric gluing data $[\gluea,\glueb]$ as in \eqref{eq:gluing_data_symmetric}. 
 If $\gluea$ is constant, and $d\geqslant 0$, then
 \begin{enumerate}[(a)]
 \item\label{caseaconstant} in the total degree case we have,
 \begin{equation*}
 \dim\quotientSpaceOne_{d}(\tau)=2d+1, \text{ and }\dim \subSpaceOne_{d}(\tau)=d^{2}+d.
 \end{equation*}
 A basis of $\subSpaceOne_{d}(\tau)$ is given by
 \begin{equation*}
 \begin{cases}
  v_{1}-u_{2},\\
  u_{1}v_{1}^{i}+u_{2}^{i}v_{2},&\mbox{for}~0\leqslant i\leqslant d-1,\\
  u_{1}^{i}v_{1}^{j},\; u_{2}^{j}v_{2}^{i},\; v_{1}^{i}-u_{2}^{i}, &\mbox{for}~2\leqslant i\leqslant d~\mbox{and}~i+ j \leqslant d,
 \end{cases}
 \end{equation*}
 \item\label{casebconstant} For bidegree $(d,d)$ case, we have
 \begin{equation*}
 \dim\quotientSpaceOne_{(d,d)}(\tau)=2d+2, \text{ and } \dim \subSpaceOne_{(d,d)}(\tau)=2d^{2}+2d-1.
 \end{equation*}
 A basis of $\subSpaceOne_{(d,d)}(\tau)$ is given by
 \begin{equation*}
 \begin{cases}
  v_1-u_2,\\
  u_{1}^{i}v_{1}^{j},\; u_{2}^{j}v_{2}^{i},\; v_{1}^{i}-u_{2}^{i},\; u_{1}v_{1}^{j}+u_{2}^{j}v_{2}, &\mbox{for}~2\leqslant i\leqslant d~\mbox{and}~0\leqslant j\leqslant d,
 \end{cases}
 \end{equation*} 
 \end{enumerate}
 \end{corollary}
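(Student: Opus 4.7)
The corollary is the specialization of Lemma \ref{lemma:Qd_tau_two_patches} to the degenerate case $d_{\gluea}=0$, where the general argument breaks down (the formulas require $d\geqslant d_{\gluea}+1$ in the total degree case and $d\geqslant d_{\gluea}$ in the bidegree case, which are satisfied for all $d\geqslant 0$, but the affine nature of the transition map makes a direct approach via Lemma~\ref{lemma:locally_affine_transition_2patches} simpler). My plan is to leverage this affine structure first, then extract the basis by the Gr\"obner-basis/remainder method of Lemma \ref{lemma:a_basis_of_Fd}, exactly as in the proof of Lemma \ref{lemma:Qd_tau_two_patches}.

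The first step is to verify the hypothesis of Lemma \ref{lemma:locally_affine_transition_2patches}. With symmetric gluing data $[\gluea,\glueb]$ and $\glueb=-1$, the lift $\widetilde{\phi}_{12}\colon\ambRing(\sigma_1)\to\ambRing(\sigma_2)$ given by $u_{1}\mapsto -v_{2}$, $v_{1}\mapsto u_{2}+cv_{2}$ (where $\gluea\equiv c\in\RR$) is a linear map whose matrix has determinant $1$, hence is invertible. Therefore Lemma \ref{lemma:locally_affine_transition_2patches} yields an isomorphism $\eta$ identifying $\quotientSpaceOne_{d}(\tau)$ with $\ambRing_{d}(\sigma_{1})/\bigl(I_{\sigma_{1}}(\tau)^{2}\cap\ambRing_{d}(\sigma_{1})\bigr)$, and similarly for the bidegree case. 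Using $I_{\sigma_{1}}(\tau)=\langle u_{1}\rangle$, the subspace $I_{\sigma_{1}}(\tau)^{2}\cap\ambRing_{d}(\sigma_{1})$ is spanned by the monomials $u_{1}^{2}\cdot u_{1}^{a}v_{1}^{b}$ with $a+b\leqslant d-2$ (resp.\ $0\leqslant a\leqslant d-2$, $0\leqslant b\leqslant d$ in the bidegree case). A direct dimension count then gives
\[
 \dim\quotientSpaceOne_{d}(\tau)=\tfrac{(d+1)(d+2)}{2}-\tfrac{d(d-1)}{2}=2d+1,
\]
and $\dim\quotientSpaceOne_{(d,d)}(\tau)=(d+1)^{2}-(d+1)(d-1)=2d+2$, matching the claim.

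For $\dim\subSpaceOne_{d}(\tau)$, I would combine this with $\dim\totalSpace_{d}(\tau)=2\dim\ambRing_{d}(\sigma_{i})-1$ (the constants are shared in the tensor product), obtaining $d^{2}+3d+1$ in the total degree case and $2d^{2}+4d+1$ in the bidegree case; subtracting $\dim\quotientSpaceOne_{d}(\tau)$ yields $d^{2}+d$ and $2d^{2}+2d-1$ respectively, as asserted.

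For the explicit bases, I would repeat the Gr\"obner-basis argument in the proof of Lemma \ref{lemma:Qd_tau_two_patches}, specializing to $\gluea$ constant and writing the listed elements as combinations of generators of $\idealComplex(\tau)$. Verifying membership in $\subSpaceOne_{d}(\tau)$ is routine since each element of the proposed basis is manifestly in $\totalSpace_{d}(\tau)$ (no mixed monomials) and lies in $\idealComplex(\tau)$ because modulo $\idealComplex(\tau)$ we have $u_{1}\equiv -v_{2}$, $v_{1}\equiv u_{2}+cv_{2}$, $u_{1}^{2}\equiv v_{2}^{2}\equiv 0$. Linear independence follows, as in Lemma \ref{lemma:Qd_tau_two_patches}, from the fact that the listed elements have pairwise distinct leading monomials under the elimination order $u_{1}\succ v_{1}\succ u_{2}\succ v_{2}$, and the count of listed elements matches $\dim\subSpaceOne_{d}(\tau)$ in each degree regime, forcing them to be a basis. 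The main bookkeeping obstacle is verifying the element counts carefully in both the total degree and bidegree settings; once one observes that the linear element $v_{1}-u_{2}$ (which is the $d_{\gluea}=0$ limit of the third family in \eqref{eq:a_basis_of_Id_tau_totaldegree}) must be handled separately because it is the unique ``low-degree'' basis element surviving when $d_{\gluea}$ drops to $0$, the count works out exactly.
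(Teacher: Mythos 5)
Your overall route---Lemma \ref{lemma:locally_affine_transition_2patches} for the dimension counts plus the remainder argument of Lemma \ref{lemma:a_basis_of_Fd} for the basis---is the same as the paper's, and your total degree dimension computation is correct. However, there is a genuine gap in how you treat the constant $\gluea\equiv c$: the paper's proof hinges on the observation that for symmetric gluing data a constant $\gluea$ only occurs when both vertex valences are $4$, which forces $\gluea=0$ and hence $\phi_{12}(u_1)=-v_2$, $\phi_{12}(v_1)=u_2$. You never establish $c=0$, and for $c\neq 0$ your argument breaks. Concretely, by your own congruences $u_1\equiv -v_2$, $v_1\equiv u_2+cv_2$, $v_2^2\equiv 0$ modulo $\idealComplexOne(\tau)$, one gets $v_1^i-u_2^i\equiv i\,c\,u_2^{i-1}v_2$, which is nonzero in the quotient when $c\neq 0$; so the listed elements $v_1-u_2$ and $v_1^i-u_2^i$ do \emph{not} lie in $\subSpaceOne_d(\tau)$ unless $c=0$, and the membership check you call routine actually fails (their $c\neq 0$ analogues would be $v_1^i-(u_2^i+icu_2^{i-1}v_2)$, as in Lemma \ref{lemma:Qd_tau_two_patches}). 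This is not a cosmetic point: if a nonzero constant $\gluea$ were admitted, the stated basis would simply be false, so establishing $\gluea=0$ is an indispensable step.

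The same omission undermines your ``similarly for the bidegree case.'' Lemma \ref{lemma:locally_affine_transition_2patches} is stated for total degree, and its bidegree analogue requires the affine lift (and its inverse) to preserve bidegree $(d,d)$. For $c\neq 0$ it does not: for instance $\widetilde{\phi}_{12}(u_1^{d}v_1^{d})=(-v_2)^{d}(u_2+cv_2)^{d}$ has $v_2$-degree up to $2d$, so the vertical map $\id\oplus\widetilde{\phi}_{12}$ in diagram \eqref{eq:commutative_diagram_for_affine_transitions} is not defined on the bidegree-$(d,d)$ spaces, nor does the surjectivity argument for $\eta$ carry over. Once you insert the missing step---symmetric gluing data as in \eqref{eq:gluing_data_symmetric} (with the choice \eqref{eq:symgluedata}) and $\deg\gluea\leqslant 0$ imply $\gluea=0$, so the transition map is the bidegree-preserving map $u_1\mapsto -v_2$, $v_1\mapsto u_2$---both the bidegree dimension count and the basis verification go through as you outline, and your proof then coincides with the paper's.
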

 \begin{proof}
 The total degree case is an immediate consequence of Lemma \ref{lemma:locally_affine_transition_2patches}. For the bidegree case, note that for symmetric gluing data, the only case where $\gluea(u)=0$ or $\deg\gluea(u)=0$ is that $(n_{0},n_{1})=(4,4)$, which gives $\phi_{12}(u_{1})=-v_{2}$ and $\phi_{12}(v_{1})=u_{2}$. 
 Hence, $\phi_{12}$ preserves bigrading and Lemma \ref{lemma:locally_affine_transition_2patches} can be applied. This proves the dimension formulas in both cases, \ref{caseaconstant} and \ref{casebconstant}. 
 Since the transition maps are explicit, we can easily verify that the given polynomials constitute a basis for $\subSpaceOne_{d}(\tau)$ and $\subSpaceOne_{(d,d)}(\tau)$ in \ref{caseaconstant} and \ref{casebconstant}, respectively. 
 \end{proof}
 Next, we calculate $\dim\quotientSpaceOne_{d}(\gamma)$ for an interior vertex $\gamma$, when $\Phi$ is given by symmetric gluing data. 
 Let $\sigma_{1},\dots,\sigma_{s}\in\Delta_2$ be the 2-faces with a common vertex $\gamma$, for some $s\geqslant 3$. 
 We assume $\sigma_{i}\cap \sigma_{i+1}=\tau_{i}\in \Delta_1$, for $i=1,\dots,s$, where we identify $\sigma_{s+1}=\sigma_{1}$. 
 
 For each edge $\tau_i$, let $\phi_{i,i+1}\colon\ambRing(\sigma_{i})/I_{\sigma_{i}}(\tau_{i})^{r+1}\to\ambRing(\sigma_{i+1})/I_{\sigma_{i}}(\tau_{i})^{r+1}$ be the $G^r$-algebraic transition map from $\sigma_i$ to $\sigma_{i+1}$, and write $\widetilde{\phi}_{i,i+1}\colon\ambRing(\sigma_{i})\to\ambRing(\sigma_{i+1})$ for a lift of $\phi_{i,i+1}$.
 From \eqref{eq:ringalpha}, we have that in this case
 \[
 \ambRing(\tau_{i})=\RR[u_{i},v_{i},u_{i+1},v_{i+1}],\text{ and } \ambRing(\gamma)=\RR[u_{1},v_{1},\dots,u_{s},v_{s}],\]
 and \eqref{eq:idealtau} gives the ideals 
 \[ 
 \idealComplex(\tau_{i})
 =
 \bigl\langle u_{i}-\widetilde{\phi}_{i,i+1}(u_{i}),~v_{i}-\widetilde{\phi}_{i,i+1}(v_{i})
 \bigr\rangle
 +
 I_{\sigma_{i}}(\tau_{i})^{r+1}\cdot\ambRing(\tau_{i})+I_{\sigma_{i+1}}(\tau_{i})^{r+1}\cdot\ambRing(\tau_{i}),\]
 and 
 \[ \idealComplex(\gamma)
 =
 \sum_{i=1}^{s}
 \bigl\langle u_{i}-\widetilde{\phi}_{i,i+1}(u_{i}),~v_{i}-\widetilde{\phi}_{i,i+1}(v_{i})
 \bigr\rangle
 + 
 \sum_{i=1}^{s}\left(I_{\sigma_{i}}(\tau_{i})^{r+1}+I_{\sigma_{i+1}}(\tau_{i})^{r+1}\right)\cdot\ambRing(\gamma).
 \]
 
 \begin{lemma}\label{lemma:dim_F_gamma_nonsingular}
 Let $\Phi$ be a collection of compatible $G^1$-algebraic transition maps given by symmetric gluing data $[\gluea,\glueb]$ as in \eqref{eq:gluing_data_symmetric}. 
 If $\gamma\in\Delta_{0}^\circ$ is an interior vertex which is common to $s\geqslant 3$ faces $\sigma_i\in\Delta_2$ and $s\neq 4$, then $\idealComplexOne(\gamma)$ is generated by
 \begin{equation}\label{eq:generating_set_Igamma}
 \bigl\{u_{i}+v_{i+1},\; v_{i}-u_{i+1}-a_{0}v_{i+1},\; u_{i}^{2},\; u_{i}v_{i},\; v_{i}^{2}\colon i=1,\dots,s\bigr\},
 \end{equation}
 where $a_{0}$ is the constant term in $\gluea(u)=\sum_{k=0}^{d_{\gluea}}a_{k}u^{k}$. 
 Additionally, if in this case $d\geqslant 1$, then
 \begin{equation}\label{eqn:dim_Fgamma_symm_nonsingular}
 \dim\quotientSpaceOne_{d}(\gamma)= \dim\quotientSpaceOne_{(d,d)}(\gamma)=3.
 \end{equation}
 \end{lemma}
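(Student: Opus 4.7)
The plan is to prove the generator claim first and then derive the dimension via Lemma \ref{lemma:a_basis_of_Fd}.

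For the containment $\supseteq$ of \eqref{eq:generating_set_Igamma} into $\idealComplexOne(\gamma)$, I would note that $u_i+v_{i+1}=u_i-\widetilde{\phi}_{i,i+1}(u_i)\in\idealComplexOne(\tau_i)$, while $u_i^2$ and $v_i^2$ come from the summands $I_{\sigma_i}(\tau_{i-1})^{2}$ and $I_{\sigma_i}(\tau_i)^{2}$ appearing in \eqref{eq:idealtau}. The delicate element is $u_iv_i$: once this is placed in $\idealComplexOne(\gamma)$, the element $v_i-u_{i+1}-a_0v_{i+1}$ also lies there, as it differs from $v_i-\widetilde{\phi}_{i,i+1}(v_i)$ by $v_{i+1}(\gluea(u_{i+1})-a_0)\in\langle u_{i+1}^2, u_{i+1}v_{i+1}\rangle$.

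To prove $u_iv_i\in\idealComplexOne(\gamma)$, I work in the quotient $A=\ambRing(\gamma)/\idealComplexOne(\gamma)$, where $u_j^2\equiv v_j^2\equiv 0$ and $v_i\equiv u_{i+1}+v_{i+1}\gluea(u_{i+1})$. Squaring the latter relation gives
\[
v_i^2\equiv 2u_{i+1}v_{i+1}\gluea(u_{i+1})\equiv 2a_0\,u_{i+1}v_{i+1}\pmod{\idealComplexOne(\gamma)},
\]
since all higher-order terms of $\gluea$ produce multiples of $u_{i+1}^2$. Combined with $v_i^2\in\idealComplexOne(\gamma)$, this forces $2a_0\,u_{i+1}v_{i+1}\in\idealComplexOne(\gamma)$. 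For symmetric gluing data \eqref{eq:gluing_data_symmetric} the constant term is $a_0=2\cos(2\pi/s)$, which is nonzero precisely when $s\neq 4$; hence $u_iv_i\in\idealComplexOne(\gamma)$ for every $i$. This also closes the reverse containment, since the defining generators of $\idealComplexOne(\gamma)$ then lie in the ideal generated by \eqref{eq:generating_set_Igamma}.

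For the dimension formula \eqref{eqn:dim_Fgamma_symm_nonsingular}, I apply Lemma \ref{lemma:a_basis_of_Fd} with $W=\totalSpace_d(\gamma)$ (respectively $W=\totalSpace_{(d,d)}(\gamma)$) and $I=\idealComplexOne(\gamma)$, using a lex order placing each $u_i$ above each $v_j$. The linear generators eliminate $u_i\mapsto -v_{i+1}$ and turn $v_i-u_{i+1}-a_0v_{i+1}$ into the cyclic three-term recurrence $v_{i+2}=a_0v_{i+1}-v_i$, whose companion matrix has eigenvalues $e^{\pm 2\pi i/s}$ and order $s$, so the cyclic closure $v_{s+i}=v_i$ is automatic. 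Thus each $v_i$ reduces to a linear form $\alpha_iv_1+\beta_iv_2\in\RR[v_1,v_2]$, and the quadratic generators yield the squares $L_i^2$ of these forms. For $s\neq 4$ with $s\geq 3$, the points $[\alpha_i:\beta_i]\in\mathbb{P}^1$ take at least three distinct values (exactly $s$ if $s$ is odd, exactly $s/2$ if $s$ is even), and three squares of linear forms corresponding to distinct points in $\mathbb{P}^1$ are linearly independent in $\RR[v_1,v_2]_2$. Therefore $\ambRing(\gamma)/\idealComplexOne(\gamma)\cong\RR[v_1,v_2]/\langle v_1^2,v_1v_2,v_2^2\rangle$, which is $3$-dimensional. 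For $d\geq 1$, the monomials $1,v_1,v_2$ appear in both $\totalSpace_d(\gamma)$ and $\totalSpace_{(d,d)}(\gamma)$; their remainders $1,v_1,v_2$ are linearly independent, while any monomial of degree $\geq 2$ reduces to zero. Hence the span of remainders equals the full quotient, giving $\dim\quotientSpaceOne_d(\gamma)=\dim\quotientSpaceOne_{(d,d)}(\gamma)=3$.

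The main obstacle is the step $u_iv_i\in\idealComplexOne(\gamma)$: it is not apparent from direct inspection of the defining generators of $\idealComplexOne(\gamma)$, and only emerges after squaring the transition-map relation and invoking $v_i^2\in\idealComplexOne(\gamma)$. The nonvanishing of $a_0$ when $s\neq 4$ is precisely what makes the conclusion go through, pinpointing $s=4$ as the genuine obstruction in the symmetric-gluing-data regime.
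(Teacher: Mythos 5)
Your proposal is correct, and its core coincides with the paper's: the containment of \eqref{eq:generating_set_Igamma} in $\idealComplexOne(\gamma)$ is proved by exactly the same trick (from $v_i^2\in\idealComplexOne(\gamma)$ and $v_i^2-2a_0u_{i+1}v_{i+1}\in\langle v_i-u_{i+1}-v_{i+1}\gluea(u_{i+1}),u_{i+1}^2,v_{i+1}^2\rangle$ one extracts $u_{i+1}v_{i+1}$, using $a_0=2\cos(2\pi/s)\neq 0$ precisely when $s\neq 4$), and then $v_i-u_{i+1}-a_0v_{i+1}$ follows. Where you diverge is in the second half. For the reverse containment you simply check that each defining generator of $\idealComplexOne(\gamma)$ lies in the ideal generated by \eqref{eq:generating_set_Igamma}, which is cleaner and more direct than the paper's argument by contradiction via remainders with respect to the elimination order $u_1\succ v_1\succ\cdots\succ u_s\succ v_s$. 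For the dimension, the paper observes tersely that remainders of any subspace containing $\{1,u_s,v_s\}$ span exactly $\spanset\{1,u_s,v_s\}$, whereas you identify the quotient ring explicitly: you eliminate the $u_i$ and solve the three-term recurrence $v_{i+2}=a_0v_{i+1}-v_i$, noting that its companion matrix is conjugate to a rotation of order $s$ so the cyclic closure imposes no extra linear constraint, and then show the substituted quadratic generators fill out all of degree two, giving $\ambRing(\gamma)/\idealComplexOne(\gamma)\cong\RR[v_1,v_2]/\langle v_1,v_2\rangle^2$. This buys a self-contained justification of the rank count that the paper leaves implicit. Two cosmetic remarks: your appeal to three pairwise non-proportional squares is unnecessary, since $v_1^2,v_2^2$ are among the substituted generators and $v_1v_2$ already lies in the substituted ideal because $u_1v_1$ does, so the degree-two part is killed without any general-position argument; and with a lex order placing the $u_i$ above the $v_j$ the surviving pair of variables is the lowest pair in the chosen $v$-order (the paper keeps $u_s,v_s$), so you should either order the $v_j$ so that $v_1,v_2$ are smallest or rename the retained pair — a bookkeeping point that does not affect the argument.
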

 \begin{proof}
 We first show that every element in \eqref{eq:generating_set_Igamma} is in $\idealComplexOne(\gamma)$. 
 It is clear that $u_{i}^{2}$, $v_{i}^{2}$ and $u_{i}+v_{i+1}$ are in $\idealComplexOne(\gamma)$. To see $u_{i+1}v_{i+1}\in \idealComplexOne(\gamma)$, note that $v_{i}^{2}\in\idealComplexOne(\gamma)$ and that
 \begin{equation*}
 v_{i}^{2}-2a_{0}u_{i+1}v_{i+1}\in 
 \bigl\langle v_{i}-u_{i+1}-v_{i+1}\gluea(u_{i+1}),\; u_{i+1}^{2},~v_{i+1}^{2}
 \bigr\rangle\subseteq\idealComplexOne(\gamma).
 \end{equation*} 
 Moreover, $s\neq 4$ implies $a_{0}\neq 0$. 
 Therefore, $u_{i+1}v_{i+1}\in \idealComplexOne(\gamma)$. 
 Now note that since $u_{i+1}^{2}$, $u_{i+1}v_{i+1}$, and $v_{i+1}^{2}$ are polynomials in $\idealComplexOne(\gamma)$, then
 \begin{equation*}
 \bigl(v_{i}-u_{i+1}-v_{i+1}\gluea(u_{i+1})\bigr)-\bigl(v_{i}-u_{i+1}-a_{0}v_{i+1}\bigr)\in
 \bigl\langle u_{i+1}^{2},~u_{i+1}v_{i+1},~v_{i+1}^{2}
 \bigr\rangle\subseteq\idealComplexOne(\gamma).
 \end{equation*}
 This implies $v_{i}-u_{i+1}-a_{0}v_{i+1}\in\idealComplexOne(\gamma)$.
 
 Next we prove that \eqref{eq:generating_set_Igamma} generates $\idealComplexOne(\gamma)$. 
 Let $I$ be the ideal generated by \eqref{eq:generating_set_Igamma}. 
 Then $I\subseteq \idealComplexOne(\gamma)$. 
 Suppose there exists $f\in\idealComplexOne(\gamma)$ such that $f\not\in I$. Let $\succ$ be the elimination order $u_{1}\succ v_{1}\succ \cdots\succ u_{s}\succ v_{s}$ on the polynomial ring $\ambRing(\gamma)$. Then the leading term of the remainder $\mathfrak{r}(f)$ with respect to $I$ is less than $v_{s}^{2}$, i.e., $\mathfrak{r}(f)=c_{1}u_{s}+c_{2}v_{s}+c_{3}$ for some constants $c_{1}$, $c_{2}$, and $c_{3}$. 
 Hence, $\mathfrak{r}(f)\in \idealComplexOne(\gamma)$, and this implies $c_{1}=c_{2}=c_{3}=0$, which means $f\in I$, which is a contradiction.
 
 Note that for any subspace $W\subseteq\ambRing(\gamma)$ such that $\{u_{s},v_{s},1\}\subseteq W$, the set of remainders $\mathfrak{r}(W)$ with respect to $\idealComplexOne(\gamma)$ is the subspace of $\ambRing(\gamma)$ spanned by $\{u_{s},v_{s},1\}$. 
 Since for $d\geqslant 1$, in the total degree case we have $\{u_{s},v_{s},1\}\subseteq\totalSpace_{d}(\gamma)$, and in the bidegree case $\{u_{s},v_{s},1\}\subseteq\totalSpace_{(d,d)}(\gamma)$, then \eqref{eqn:dim_Fgamma_symm_nonsingular} holds, which concludes the proof.
 \end{proof}
 
 \begin{lemma}\label{lemma:dim_F_gamma_singular}
 Let $\Phi$ be a collection of compatible $G^1$-algebraic transition maps given by
 gluing data $[\gluea,\glueb]$ as in \eqref{eq:gluing_data_symmetric}. 
 If $\gamma\in\Delta_{0}^\circ$ is an interior vertex which is common to 4 faces $\sigma_i\in\Delta_2$, then $\idealComplexOne(\gamma)$ is generated by
 \begin{equation}\label{eq:generating_set_Igamma_singular}
 \bigl\{u_{i}+v_{i+1},\; v_{i}-u_{i+1},\; u_{i}^{2},\; v_{i}^{2}\colon i=1,2,3,4\bigr\}.
 \end{equation}
 Additionally, if $d\geqslant 1$ then $\dim\quotientSpaceOne_{(d,d)}(\gamma)=4$, and if $d\geqslant 2$ then $\dim\quotientSpaceOne_{d}(\gamma)=4$. 
 \end{lemma}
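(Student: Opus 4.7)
The plan is to exploit the fact that at a valence-4 vertex $\gamma$ the factor $\cos(2\pi/4)$ vanishes, annihilating the $h$-contribution in the symmetric gluing data $\gluea = 2\cos(2\pi/w)h - 2\cos(2\pi/w')q$ on every edge $\tau_i$ incident to $\gamma$. Adopting the explicit choice $h(u)=(1-u)^{2}$, $q(u)=u^{2}$ from \eqref{eq:symgluedata}, the gluing polynomial on the edge $\tau_i$ at $\gamma$ simplifies to $\gluea^{(i)}(u_{i+1}) = -2\cos(2\pi/w_i')\,u_{i+1}^{2}$, where $w_i'$ is the valence of the other endpoint of $\tau_i$. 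Crucially, both the constant and the linear coefficient of $\gluea^{(i)}$ vanish, which is precisely what removes the obstruction encountered in Lemma \ref{lemma:dim_F_gamma_nonsingular} and changes the dimension from $3$ to $4$.

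First I would verify that every generator in \eqref{eq:generating_set_Igamma_singular} lies in $\idealComplexOne(\gamma)$. The squares $u_i^{2}$ and $v_i^{2}$ come directly from $I_{\sigma_i}(\tau_{i-1})^{2}$ and $I_{\sigma_i}(\tau_i)^{2}$. The binomial $u_i + v_{i+1}$ equals $u_i - \widetilde{\phi}_{i,i+1}(u_i)$ since $\glueb = -1$. For $v_i - u_{i+1}$, the original generator of $\idealComplexOne(\gamma)$ is $v_i - u_{i+1} - v_{i+1}\gluea^{(i)}(u_{i+1})$, and the correction $v_{i+1}\gluea^{(i)}(u_{i+1}) = -2\cos(2\pi/w_i')\,u_{i+1}^{2}v_{i+1}$ already lies in $\langle u_{i+1}^{2}\rangle \subseteq \idealComplexOne(\gamma)$, so it can be absorbed. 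The reverse inclusion follows immediately by the same computation, so the two ideals coincide.

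For the dimension count I would compute $\ambRing(\gamma)/\idealComplexOne(\gamma)$ directly from \eqref{eq:generating_set_Igamma_singular}. The linear identifications $u_i = -v_{i+1}$ and $v_i = u_{i+1}$ propagate cyclically around the four faces, expressing each $u_i, v_i$ as $\pm u_4$ or $\pm v_4$, while the relations $u_i^{2} = v_i^{2} = 0$ collapse to $u_4^{2} = v_4^{2} = 0$. This yields a ring isomorphism
\begin{equation*}
\ambRing(\gamma)/\idealComplexOne(\gamma) \;\cong\; \RR[u_4,v_4]/\langle u_4^{2},v_4^{2}\rangle,
\end{equation*}
a 4-dimensional algebra with basis $\{1, u_4, v_4, u_4 v_4\}$. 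By Lemma \ref{lemma:a_basis_of_Fd}, $\quotientSpaceOne_{d}(\gamma)$ and $\quotientSpaceOne_{(d,d)}(\gamma)$ are the images of $\totalSpace_{d}(\gamma)$ and $\totalSpace_{(d,d)}(\gamma)$ in this quotient, so the two dimension statements reduce to the question of when $u_4 v_4$ belongs to the respective $\totalSpace$. This forces $d \geqslant 1$ in the bidegree case and $d \geqslant 2$ in the total-degree case.

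The main obstacle is bookkeeping rather than conceptual: one must carefully track the alternating signs in the cyclic identifications $u_i \leftrightarrow v_{i+1}$ around the four faces to confirm that every generator of $\idealComplexOne(\gamma)$ indeed maps to zero in $\RR[u_4,v_4]/\langle u_4^{2},v_4^{2}\rangle$, so that the surjection onto this 4-dimensional algebra is actually an isomorphism rather than merely a quotient.
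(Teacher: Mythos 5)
Your proposal is correct and follows essentially the same route as the paper: you verify that the stated generators absorb the $\gluea$-correction because $\cos(2\pi/4)=0$ makes $\gluea$ a pure quadratic on edges at a valence-4 vertex, and you then identify $\quotientSpaceOne_{d}(\gamma)$ and $\quotientSpaceOne_{(d,d)}(\gamma)$ with the span of $1,u_4,v_4,u_4v_4$, the degree thresholds coming from when $u_4v_4\in\totalSpace_{d}(\gamma)$ (resp. $\totalSpace_{(d,d)}(\gamma)$). The only cosmetic difference is that you compute $\ambRing(\gamma)/\idealComplexOne(\gamma)\cong\RR[u_4,v_4]/\langle u_4^{2},v_4^{2}\rangle$ by direct cyclic substitution, whereas the paper phrases the same count via remainders for an elimination order and Lemma \ref{lemma:a_basis_of_Fd}.
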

 \begin{proof}
 It is clear that $u_{i}^{2},\; v_{i}^{2},\; u_{i}+v_{i+1}\in\idealComplexOne(\gamma)$. 
 Not that if the number of faces around $\gamma$ is 4, then $\gluea(u_{i})=a_{2}u_{t}^2$. 
 Thus $v_{i}-u_{i+1} = v_{i}-u_{i+1}-v_{i+1}\gluea(u_{i+1})-a_{2}v_{i+1}u_{i+1}^{2}\in\idealComplexOne(\gamma)$. 
 It is easy to verify by an explicit computation that \eqref{eq:generating_set_Igamma_singular} generate $\idealComplexOne(\gamma)$. 
 
 Let $\succ$ be the elimination order $u_{1}\succ v_{1}\succ \dots\succ u_{4}\succ v_{4}$ on the polynomial ring $\ambRing(\gamma)$. 
 Note that for any subspace $W\subseteq\ambRing(\gamma)$ such that $\{u_{4}v_{4},u_{4},v_{4},1\}\subseteq W$, the set $\mathfrak{r}(W)$ of reminders of $W$ with respect to $\idealComplexOne(\gamma)$ is the subspace of $\ambRing(\gamma)$ spanned by $\bigl\{u_{4}v_{4},u_{4},v_{4},1\bigr\}$. 
 Since $\bigl\{u_{4}v_{4},u_{4},v_{4},1\bigr\}\subseteq\totalSpace_{d}(\gamma)$ for total degree $d\geqslant 2$ and $\{u_{4}v_{4},u_{4},v_{4},1\}\subseteq\totalSpace_{(d,d)}(\gamma)$ for bidegree $d\geqslant 1$, then the dimension of $\quotientSpaceOne_{d}(\gamma)$ and $\quotientSpaceOne_{(d,d)}(\gamma)$ is 4 for the correspondent values of $d$, which proves the statement.
 \end{proof}
 
 \section{Explicit computations of the dimension of $G^r$-spline spaces}\label{section:explicit_computations}
 Following the notation in Section \ref{section:spline_complex_dim2}, in this section we also assume $\Delta$ is a $2$-dimensional cell complex. 
 We take a collection $\Phi$ of compatible $G^r$-algebraic transition maps on $\Delta$, for some $r\geqslant 0$. 
 We will focus on the the homology terms $H_{1}(\quotientSpace_{d,\bullet})$ and $H_{0}(\quotientSpace_{d,\bullet})$, which appear in the dimension formula for $\dim G^{r}_{d}(\Delta,\Phi)$ given in \eqref{eq:dim2cellcase}. 
 We compute $\dim H_{1}(\quotientSpace_{d,\bullet})$ and $\dim H_{0}(\quotientSpace_{d,\bullet})$ in two particular cases of $G^1$-domains $(\Delta,\Phi)$, namely when $\Delta$ consists of two faces in Section \ref{section:two_patches} and when $\Delta$ is a star of vertex in Section \ref{sec:star}.
 The dimension formulas for the homology terms together with the Euler characteristic $\chi(\quotientSpace_{d,\bullet})$ computed in Section \ref{section:spline_complex_dim2} lead us to to explicit dimension formulas of $G^1$-spline spaces in these two cases in Proposition \ref{prop:Id_tau_two_patches} and Theorem \ref{theor:dimG1}, respectively. 
 
 First, notice that when $\Delta$ is a $2$-dimensional cell complex, the following diagram commutes
 \begin{equation}\label{eq:complex2D}
 \begin{tikzcd}[cramped]
 {\subSpace_{d,\bullet}\colon} & 0 \arrow[r] \arrow[d] & \bigoplus_{\tau\in\Delta_{1}^{\circ}}\subSpace_{d}(\tau) \arrow[d] \arrow[r, "\delta_{1}"] & \bigoplus_{\gamma\in\Delta_{0}^{\circ}}\subSpace_{d}(\gamma) \arrow[d] \\
 {\totalSpace_{d,\bullet}\colon} & \bigoplus_{\sigma\in\Delta_{2}}\totalSpace_{d}(\sigma) \arrow[r, "\delta_{2}"] \arrow[d] & \bigoplus_{\sigma\supseteq\tau,\, \tau\in\Delta_{1}^{\circ}}\totalSpace_{d}(\sigma) \arrow[r, "\delta_{1}"] \arrow[d] & \bigoplus_{\sigma\supseteq\gamma,\,\gamma\in\Delta_{0}^{\circ}}\totalSpace_{d}(\sigma) \arrow[d] \\
 {\quotientSpace_{d,\bullet}\colon} & \bigoplus_{\sigma\in\Delta_{2}}\quotientSpace_{d}(\sigma) \arrow[r, "\delta_{2}"] & \bigoplus_{\tau\in\Delta_{1}^{\circ}}\quotientSpace_{d}(\tau) \arrow[r, "\delta_{1}"] & \bigoplus_{\gamma\in\Delta_{0}^{\circ}}\quotientSpace_{d}(\gamma), 
 \end{tikzcd}
 \end{equation}
 where $\delta_{1}\colon \bigoplus_{\tau\in\Delta_{1}^{\circ}}\ambRing(\sigma)\to\bigoplus_{\gamma\in\Delta_{0}^{\circ}}\ambRing(\gamma)$ and $\delta_{2}\colon \bigoplus_{\sigma\in\Delta_{2}}\ambRing(\sigma)\to\bigoplus_{\tau\in\Delta_{1}^{\circ}}\ambRing(\tau)$ are the differential maps defined in \eqref{eq:delta}.
 By an abuse of notation, we write $\delta_{i}$ for its restriction to the subspaces in $\totalSpace_{d,\bullet}$ and $\subSpace_{d,\bullet}$, and for the map it induces on the quotient spaces $\quotientSpace_{d,\bullet}$. 
 
 The short exact sequence of chain complexes of vector spaces
 \begin{equation*}
 0\to \subSpace_{d,\bullet}\to\totalSpace_{d,\bullet}\to\quotientSpace_{d,\bullet}\to 0
 \end{equation*}
 leads us to the long exact sequence
 \begin{multline}\label{eq:LES_of_Homologies}
 0\to H_{2}(\subSpace_{d,\bullet})\to H_{2}(\totalSpace_{d,\bullet})\to H_{2}(\quotientSpace_{d,\bullet})
 \to H_{1}(\subSpace_{d,\bullet})\to H_{1}(\totalSpace_{d,\bullet})\to\\ H_{1}(\quotientSpace_{d,\bullet})
 \to H_{0}(\subSpace_{d,\bullet})\to H_{0}(\totalSpace_{d,\bullet})\to H_{0}(\quotientSpace_{d,\bullet})\to 0.
 \end{multline}
 Thus, we can study $\dim H_{1}(\quotientSpace_{d,\bullet})$ and $\dim H_{0}(\quotientSpace_{d,\bullet})$ by analyzing the terms in the sequence \eqref{eq:LES_of_Homologies}. 
 We start with the homologies of $\totalSpace_{d,\bullet}$.
 
 \begin{remark} Notice that the commutativity of the diagram \eqref{eq:complex2D} and the exactness of the long sequence \eqref{eq:LES_of_Homologies} hold if we replace the chain complexes $\totalSpace_{d,\bullet}$, $\subSpace_{d,\bullet}$, $\quotientSpace_{d,\bullet}$ by their bidegree counterparts $\totalSpace_{(d,d),\bullet}$, $\subSpace_{(d,d),\bullet}$, and $\quotientSpace_{(d,d),\bullet}$, for any $d\geqslant 0$.
 To simplify the exposition, we only introduce the notation in terms of the total degree case $G^r_d(\Delta,\Phi)$, but we apply the construction and give explicit formulas for each case $G^r_d(\Delta,\Phi)$ and $G^r_{(d,d)}(\Delta,\Phi)$ in the results and examples below.
 \end{remark}
 
 For an integer $k\geqslant 0$ and a 2-face $\sigma\in\Delta_2$, we denote by $\ambRing(\sigma)_{k}$ the set of homogeneous polynomials in $\ambRing(\sigma)$ of degree $k$.
 We write $(\ambRing_{\bullet})_{k}$ for the chain complex of the $k$-graded pieces of $\ambRing_{\bullet}$, and take $(\totalSpace_{d,\bullet})_{k}=\totalSpace_{d,\bullet}\cap(\ambRing_{\bullet})_{k}$. 
 Using this notation we can rewrite $\totalSpace_{d,\bullet}=\bigoplus_{0\leqslant k\leqslant d}(\totalSpace_{d,\bullet})_{k}$.
 
 \begin{lemma}\label{lemma:HomologyOfambRing}
 If $\Delta$ is a 2-dimensional polyhedral complex with only one connected component and the number of 2-faces of $\Delta$ is strictly greater than
 $1$, then 
 \begin{equation*}
 H_{i}(\totalSpace_{d,\bullet})_{0}= H_{i}\bigl(\Delta,\partial\Delta;\RR\bigr), 
 \end{equation*}
 for $i=0,1,2$. 
 Moreover, for any $k\geqslant 1$, we have 
 \[
 H_{0}(\totalSpace_{d,\bullet})_{k}
 =\bigoplus_{\sigma\in\Delta_{2}'}\ambRing(\sigma)_k,\quad
 H_{1}(\totalSpace_{d,\bullet})_{k}
 =\bigoplus_{\sigma\in\Delta_{2}\setminus\Delta_{2}'}\ambRing(\sigma)_k^{m(\sigma)-1},\,\text{and\quad} H_{2}(\totalSpace_{d,\bullet})_{k}
 =0,
 \]
 where $\Delta'=\bigl\{\sigma\in\Delta_2\colon \sigma\subseteq\Delta\setminus\partial\Delta\bigr\}$, and $m(\sigma)$ is the number of connected components of $\partial\Delta\cap\sigma$ for any $\sigma\in\Delta_2$.
 (We always assume that a 2-face $\sigma\in\Delta$ contains its edges and vertices.)
 In the bidegree case, the corresponding formulas for $H_i\bigl(\totalSpace_{(d,d),\bullet}\bigr)_{k}$ hold.
 \end{lemma}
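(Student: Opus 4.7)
The plan is to exploit the fact that the differentials of $\totalSpace_{d,\bullet}$ preserve the polynomial degree, so the chain complex splits as a direct sum of graded pieces $\totalSpace_{d,\bullet}=\bigoplus_{k=0}^{d}(\totalSpace_{d,\bullet})_{k}$, and to analyze each degree $k$ separately. The case $k=0$ is essentially tautological: on constants the spaces $\ambRing(\sigma)_{0}=\RR$ coincide after the identifications built into the tensor products $\ambRing(\alpha)$, so the degree-$0$ piece is literally the cellular chain complex $\mcC_{\bullet}(\Delta,\partial\Delta;\RR)$, and its homology is by definition $H_{i}(\Delta,\partial\Delta;\RR)$.

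For $k\geqslant 1$ the constants are absent, so polynomials from different coordinate rings $\ambRing(\sigma)_{k}$ sit in linearly independent subspaces of any tensor product ring $\ambRing(\alpha)$. I would use this to prove a splitting
\begin{equation*}
(\totalSpace_{d,\bullet})_{k}\;\cong\;\bigoplus_{\sigma\in\Delta_{2}}\ambRing(\sigma)_{k}\otimes_{\RR} C^{\sigma}_{\bullet},
\end{equation*}
where $C^{\sigma}_{\bullet}$ is the subcomplex of $\mcC_{\bullet}(\Delta,\partial\Delta)$ generated by those interior faces of $\Delta$ contained in $\bar{\sigma}$ (namely $\sigma$ itself in dimension $2$, the interior edges of $\Delta$ lying in $\bar\sigma$ in dimension $1$, and the interior vertices of $\Delta$ in $\bar\sigma$ in dimension $0$), with boundary inherited from $\partial_{\bullet}$. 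A short check that the restriction of $\delta_{k}$ to each summand $\ambRing(\sigma)_{k}$ reduces to the scalar extension of $\partial_{\bullet}$ gives that this is an isomorphism of chain complexes, and since we are over $\RR$, K\"unneth yields
\begin{equation*}
H_{i}(\totalSpace_{d,\bullet})_{k}\;\cong\;\bigoplus_{\sigma\in\Delta_{2}}\ambRing(\sigma)_{k}\otimes_{\RR}H_{i}(C^{\sigma}_{\bullet}).
\end{equation*}

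The next step is a topological identification $C^{\sigma}_{\bullet}\cong \mcC_{\bullet}(\bar{\sigma},\bar{\sigma}\cap\partial\Delta;\RR)$, which follows because the generators of $C^{\sigma}_{\bullet}$ are in bijection with cells of $\bar{\sigma}$ not contained in $\partial\Delta$. Since $\bar{\sigma}$ is a closed topological disk, writing $L_{\sigma}=\bar{\sigma}\cap\partial\Delta$ the long exact sequence of the pair $(\bar{\sigma},L_{\sigma})$ combined with $H_{i}(\bar{\sigma})=\RR$ for $i=0$ and $0$ otherwise, and with $H_{i}(L_{\sigma})=\RR^{m(\sigma)}$ for $i=0$ and $0$ for $i\geqslant 1$ (each connected component of $L_{\sigma}$ being an arc or a point, hence contractible), yields
\begin{equation*}
H_{0}(\bar{\sigma},L_{\sigma})=\begin{cases}\RR&\text{if }L_{\sigma}=\emptyset,\\ 0&\text{otherwise,}\end{cases}\qquad H_{1}(\bar{\sigma},L_{\sigma})=\RR^{\max(m(\sigma)-1,0)},\qquad H_{2}(\bar{\sigma},L_{\sigma})=0.
\end{equation*}
Substituting these into the K\"unneth formula gives exactly the formulas claimed in the lemma.

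The main obstacle, and the point where the hypotheses are used, is ensuring $H_{2}(\bar{\sigma},L_{\sigma})=0$: this would fail if $L_{\sigma}=\partial\bar{\sigma}$, i.e.\ if all of $\bar{\sigma}$'s boundary lay on $\partial\Delta$, for then $\bar{\sigma}/L_{\sigma}\simeq S^{2}$ would produce a $\RR$ in top degree. But if this happened, no other $2$-face could share an edge or vertex with $\sigma$, so $\sigma$ would be an isolated connected component, contradicting the hypothesis that $\Delta$ is connected with more than one $2$-face; hence $L_{\sigma}$ is always a proper (possibly empty) subset of $\partial\bar{\sigma}$, and $H_{1}(L_{\sigma})=0$. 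The bidegree version is obtained verbatim by replacing $\ambRing_{d}(\sigma)_{k}$ with the $k$-graded piece of the bidegree $(d,d)$ ring in the splitting, since the argument never uses the total-degree structure beyond degree preservation.
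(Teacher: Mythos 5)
Your proposal is correct and follows essentially the same route as the paper's own proof: split the complex into graded pieces, treat $k=0$ as the relative cellular chain complex, identify each degree-$k$ piece as $\bigoplus_{\sigma\in\Delta_2}\ambRing(\sigma)_k\otimes C_{\bullet}(\sigma,\partial\Delta\cap\sigma)$, and compute $H_i(\sigma,\partial\Delta\cap\sigma)$ via the long exact sequence of the pair. Your final observation that $\bar\sigma\cap\partial\Delta$ cannot be the whole boundary circle of $\bar\sigma$ (using connectedness and the existence of a second $2$-face, tacitly relying on $\Delta$ being a manifold with boundary) is precisely the role played by the paper's remark ``by hypothesis $\sigma\neq\Delta$'' in establishing $H_1(\partial\Delta\cap\sigma)=0$.
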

 \begin{proof}
 We only prove the formulas in the total degree case, the bidegree case follows by a similar reasoning.
 It is clear that
 \begin{equation*}
 (\totalSpace_{d,\bullet})_{0}\colon\bigoplus_{\sigma\in\Delta_{2}}\RR\xrightarrow{\delta_{2}}\bigoplus_{\tau\in\Delta_{1}^{\circ}}\RR\xrightarrow{\delta_{1}}\bigoplus_{\gamma\in\Delta_{0}^{\circ}}\RR,
 \end{equation*}
 and hence $H_{i}(\totalSpace_{d,\bullet})_{0}= H_{i}(\Delta,\partial\Delta;\RR)$, for $i=0,1,2$.
 
 If $k\geqslant 1$, then we can rewrite $\mathcal{T}_{d,\bullet}$ as a direct sum $(\totalSpace_{d}(\tau))_{k}=\bigoplus_{\sigma\supseteq\tau}\ambRing(\sigma)_{k}$ for every edge $\tau\in\Delta_1^\circ$, and $(\totalSpace_{d}(\gamma))_{k}=\bigoplus_{\sigma\supseteq\gamma}\ambRing(\sigma)_{k}$ for every vertex $\gamma\in\Delta_0^\circ$. 
 Thus, if $(\delta_{i})_{k}$ denotes the boundary map in the chain complex $ \bigl(\totalSpace_{d,\bullet}\bigr)_k$ then, we can rewrite it as the direct sum
 \[
 (\delta_{i})_{k}=\bigoplus_{\sigma\in\Delta_{2}}\bigl(\delta_{i}(\sigma)\bigr)_{k},
 \]
 where for each 2-face $\sigma\in\Delta_2$, the map $(\delta_{i}(\sigma))_{k}$ is the boundary map in the chain complex $\ambRing_{\bullet}(\sigma)_{k}$ given by
 \begin{equation*}
 \ambRing_{\bullet}(\sigma)_{k}
 \colon
 \ambRing(\sigma)_{k}\xrightarrow{\delta_{2}(\sigma)_{k}}\bigoplus_{\tau\subseteq\sigma\; \tau\in\Delta_{1}^{\circ}}\ambRing(\sigma)_{k}\xrightarrow{\delta_{1}(\sigma)_{k}}\bigoplus_{\gamma\subseteq\sigma,\; \gamma\in\Delta_{0}^{\circ}}\ambRing(\sigma)_{k}.
 \end{equation*}
 Therefore, we can write the chain complex $(\totalSpace_{d,\bullet})_{k}= \bigoplus_{\sigma\in\Delta_{2}}\ambRing_{\bullet}(\sigma)_{k}$.
 Since
 \begin{equation*}
 \ambRing_{\bullet}(\sigma)_{k}=\ambRing(\sigma)_{k}\otimes C_{\bullet}(\sigma,\partial\Delta\cap \sigma),
 \end{equation*}
 where $C_{\bullet}(\sigma,\partial\Delta\cap \sigma)$ is the quotient complex $C_{\bullet}(\sigma)/C_{\bullet}(\partial\Delta\cap \sigma)$ of the chain complexes of $\sigma$ and $\partial\Delta\cap\sigma$, respectively.
 In particular, $H_{i}(\totalSpace_{d,\bullet})_{k}
 =
 \bigoplus_{\sigma\in\Delta_{2}}\ambRing(\sigma)_{k}\otimes H_{i}(\sigma,\partial\Delta\cap \sigma)$.
 Thus, to compute $H_{i}(\totalSpace_{d,\bullet})_{k}$, we first consider $H_{i}(\sigma,\partial\Delta\cap \sigma)$.
 Notice that the sequence $0\to C_\bullet(\partial\Delta\cap\sigma)\to C_{\bullet}(\sigma)\to C_{\bullet}(\sigma,\partial\Delta\cap\sigma)\to 0$ yields the long exact sequence of homologies
 \begin{multline*}
 0\to H_{2}(\partial\Delta\cap \sigma)\to H_{2}(\sigma)\to H_{2}(\sigma,\partial\Delta\cap \sigma)
 \to H_{1}(\partial\Delta\cap \sigma)\to\\
 H_{1}(\sigma)\to H_{1}(\sigma,\partial\Delta\cap \sigma)
 \to H_{0}(\partial\Delta\cap \sigma)\to H_{0}(\sigma)\to H_{0}(\sigma,\partial\Delta\cap \sigma)\to 0\,.
 \end{multline*}
 By hypothesis $\sigma\neq\Delta$, so
 \begin{align*}
 H_{2}(\sigma)&=H_{1}(\sigma)=0,&H_{0}(\sigma)&=\ZZ,\\
 H_{2}(\partial\Delta\cap \sigma)&=H_{1}(\partial\Delta\cap \sigma)=0,&H_{0}(\partial\Delta\cap \sigma)&=\ZZ^{m(\sigma)}.
 \end{align*}
 From this we see that $H_{2}(\sigma,\partial\Delta\cap \sigma)=0$, which implies $H_{2}(\totalSpace_{d,\bullet})_{k}=0$. 
 Furthermore, we can rewrite the last part of the sequence as
 \begin{equation*}
 0\to H_{1}(\sigma,\partial\Delta\cap \sigma)\to \ZZ^{m(\sigma)}\to\ZZ\to H_{0}(\sigma,\partial\Delta\cap \sigma)\to 0.
 \end{equation*}
 Note that $\sigma\in\Delta_{2}'$ if and only if $m(\sigma)=0$, in which case $H_{1}(\sigma,\partial\Delta\cap \sigma)=0$ and $H_{0}(\sigma,\partial\Delta\cap \sigma)=\ZZ$.
 On the other hand, if $\sigma\notin\Delta'$, or equivalently if $m(\sigma)\geqslant 1$, then the map $\ZZ^{m(\sigma)}\to\ZZ$ (defined by multiplication with $(1,\dots,1)\in \ZZ^{m(\sigma)}$) is surjective. 
 Hence, for $\sigma\notin\Delta'$ we have $H_{1}(\sigma,\partial\Delta\cap \sigma)=\ZZ^{m(\sigma)-1}$ and $H_{0}(\sigma,\partial\Delta\cap \sigma)=0$. 
 Putting these two cases together leads to the formulas for $H_i(\totalSpace_{d,\bullet})_k$ for $i=0,1$, which concludes the proof.
 \end{proof}
 \subsection{$G^r$-splines on a cell complex composed by two patches}\label{section:two_patches}
 If $\Delta$ is a 2-dimensional cell complex, and $\tau\in\Delta_1^\circ$ is an interior edge of $\Delta$, then we define $\Delta_\tau\subseteq \Delta$ as the subcomplex composed by the two $2$-faces $\sigma_{1}, \sigma_{2}\in \Delta$ containing $\tau$. As before, we write $\phi_{12}\colon\ambRing(\sigma_{1})/I_{\sigma_{1}}(\tau)^{r+1}\to\ambRing(\sigma_{2})/I_{\sigma_{2}}(\tau)^{r+1}$ for the algebraic transition map of $\tau$ from $\sigma_1$ to $\sigma_2$.
 The cell complex and $\phi_{12}$ define a $2$-dimensional $G^{r}$-domain, which we denote by $\bigl(\Delta_\tau,\Phi\bigr)$. 
 Since $\Delta_\tau$ does not have interior vertices, the term associated to the $0$-faces in \eqref{eq:complex2D} is zero i.e., 
 \begin{align*}
 \totalSpace_{d,0}=\subSpace_{d,0}=\quotientSpace_{d,0}=0,
 \end{align*}
 $H_{0}(\totalSpace_{d,\bullet})=0$.
 By Lemma \ref{lemma:HomologyOfambRing}, we have $H_{1}(\totalSpace_{d,\bullet})_{0}=H_{1}\bigl(\Delta_\tau,\partial\Delta_\tau;\RR\bigr)=0$, and since the number of connected components $m(\sigma_i)$ of $\partial\Delta_\tau\cap\sigma_i$ is 1 for $i=1,2$, then 
 $H_{1}(\totalSpace_{d,\bullet})_{k}=0$ for every degree $k\geqslant 0$. 
 Thus, 
\begin{equation*}
 H_{1}(\totalSpace_{d,\bullet})=\bigoplus_{k=0}^{\infty}H_{1}(\totalSpace_{d,\bullet})_{k}=0, 
\end{equation*}
and the long exact sequence (\ref{eq:LES_of_Homologies}) reduces to
\begin{align*}
 H_{1}(\quotientSpace_{d,\bullet})=H_{0}(\subSpace_{d,\bullet})=
 H_{0}(\quotientSpace_{d,\bullet})=0.
\end{align*}
Therefore the dimension formula \eqref{eq:dim2cellcase} of the $G^r$-spline space on $\Delta_\tau$ reduces to
\begin{equation}\label{eq:dim_G_Mtau_by_chain_complex}
 \dim G^{r}_{d}\bigl(\Delta_\tau,\Phi\bigr)=\dim \totalSpace_{d}(\sigma_{1})+\dim \totalSpace_{d}(\sigma_{2})-\dim \quotientSpace_{d}(\tau).
\end{equation}
Notice that a similar reasoning yields the corresponding formulas in the bidegree case. 
\begin{proposition}\label{prop:Id_tau_two_patches}
 Let $\Delta_\tau$ be as above, and suppose the algebraic transition map $\phi_{12}$ is given by symmetric gluing data $[\gluea,\glueb]$ as in \eqref{eq:gluing_data_symmetric}. 
 If $d_{\gluea}=\deg \gluea \geqslant 1$ then
 \begin{align*}
 \dim G^{1}_{d}\bigl(\Delta_\tau,\Phi\bigr)
 &=
 d^{2}+d+1-d_{\gluea},\quad \text{ if $d\geqslant d_{\gluea}+1$, }
 \intertext{and}
 \dim G^{1}_{(d,d)}\bigl(\Delta_\tau,\Phi\bigr)&=2d^{2}+2d+1-d_{\gluea}, \quad \text{if $d\geqslant d_{\gluea}$}.
 \end{align*}
 If $\gluea$ is constant, and $d\geqslant 0$, then
 \begin{equation*}
 \dim G^{1}_{d}\bigl(\Delta_\tau,\Phi\bigr)=d^{2}+d+1, \; 
 \text{and\, }
 \dim G^{1}_{(d,d)}\bigl(\Delta_\tau,\Phi\bigr)=2d^{2}+2d.
 \end{equation*}
\end{proposition}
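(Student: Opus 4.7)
The plan is to apply formula~\eqref{eq:dim_G_Mtau_by_chain_complex}, which was just established in the discussion preceding the proposition and which reduces the dimension of $G^{1}_{d}(\Delta_\tau,\Phi)$ to the simple Euler-type expression
\[
 \dim G^{1}_{d}(\Delta_\tau,\Phi)=\dim \totalSpace_{d}(\sigma_{1})+\dim \totalSpace_{d}(\sigma_{2})-\dim \quotientSpaceOne_{d}(\tau),
\]
and analogously for the bidegree case. Since $\Delta_\tau$ has no interior vertices and only one interior edge $\tau$, all homological terms in the long exact sequence \eqref{eq:LES_of_Homologies} have already been shown to vanish (this was done in the discussion directly above the statement via Lemma~\ref{lemma:HomologyOfambRing}), so no further homological input is needed.

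Thus the work really reduces to plugging in the right dimensions. The numbers $\dim \totalSpace_{d}(\sigma_{i})=\binom{d+2}{2}$ in the total degree case, and $\dim \totalSpace_{(d,d)}(\sigma_{i})=(d+1)^{2}$ in the bidegree case, are standard. The only non-trivial ingredient is $\dim \quotientSpaceOne_{d}(\tau)$, and this has just been computed: if $d_{\gluea}\geqslant 1$ and $d\geqslant d_{\gluea}+1$ (respectively $d\geqslant d_{\gluea}$ in the bidegree case), Lemma~\ref{lemma:Qd_tau_two_patches} gives $\dim \quotientSpaceOne_{d}(\tau)=\dim \quotientSpaceOne_{(d,d)}(\tau)=2d+d_{\gluea}+1$, while if $\gluea$ is constant Corollary~\ref{Cor:Qd_tau_two_patches_planar} gives $\dim \quotientSpaceOne_{d}(\tau)=2d+1$ and $\dim \quotientSpaceOne_{(d,d)}(\tau)=2d+2$.

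Concretely, in the case $d_{\gluea}\geqslant 1$ and $d\geqslant d_{\gluea}+1$ I would compute
\[
 \dim G^{1}_{d}(\Delta_\tau,\Phi)
 =2\cdot\tfrac{(d+1)(d+2)}{2}-(2d+d_{\gluea}+1)
 =d^{2}+d+1-d_{\gluea},
\]
and in the bidegree case under $d\geqslant d_{\gluea}$,
\[
 \dim G^{1}_{(d,d)}(\Delta_\tau,\Phi)=2(d+1)^{2}-(2d+d_{\gluea}+1)=2d^{2}+2d+1-d_{\gluea}.
\]
When $\gluea$ is constant, the same substitution with the values from Corollary~\ref{Cor:Qd_tau_two_patches_planar} yields $d^{2}+d+1$ and $2d^{2}+2d$ respectively. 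There is no real obstacle here: all the hard work was done in Lemma~\ref{lemma:Qd_tau_two_patches} and Corollary~\ref{Cor:Qd_tau_two_patches_planar}, where the Gröbner-basis remainder computation pinned down $\dim \quotientSpaceOne_{d}(\tau)$. The proof is therefore just a short assembly: invoke \eqref{eq:dim_G_Mtau_by_chain_complex}, substitute the patch dimensions and the relevant dimension of $\quotientSpaceOne_{d}(\tau)$ from the two lemmas, and simplify.
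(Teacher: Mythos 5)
Your proposal is correct and follows essentially the same route as the paper: the paper's proof likewise just substitutes $\dim\quotientSpaceOne_{d}(\tau)$ from Lemma \ref{lemma:Qd_tau_two_patches} (resp.\ Corollary \ref{Cor:Qd_tau_two_patches_planar} when $\gluea$ is constant) into \eqref{eq:dim_G_Mtau_by_chain_complex}, whose derivation from the vanishing homology terms was already carried out in the discussion preceding the proposition. Your explicit arithmetic checks out in all four cases.
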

\begin{proof}
 The case $\deg \gluea\geqslant 1$ follows by replacing $\dim \quotientSpaceOne_{d}(\tau)$ obtained in Lemma \ref{lemma:Qd_tau_two_patches} in the dimension formula for $G_d^1(\Delta_\tau,\Phi)$ in \eqref{eq:dim_G_Mtau_by_chain_complex}. 
 If $\gluea$ is constant, the dimension formulas follow from Corollary \ref{Cor:Qd_tau_two_patches_planar}. 
\end{proof}

\begin{remark}
 It is worth mentioning that the dimension formulas for $G^1_{(d,d)}(\Delta_\tau, \Phi)$ in Proposition \ref{prop:Id_tau_two_patches} coincide with those in \cite{mourrain2016dimension}, where it was proved that
 \begin{equation*}
 \dim G^{1}_{(d,d)}\bigl(\Delta_\tau,\Phi\bigr)=2d(d+1)-\max\{\deg\glueb,\deg\gluea-1\}.
 \end{equation*}
 In this paper, we only consider the case of symmetric gluing data $[\gluea,\glueb]$, where $\glueb$ is a constant polynomial as in Example \ref{section:symmetric_gluing_data}.
\end{remark}

\begin{example}[$G^1$-splines on $\Delta_\tau$]\label{Eg:two_patches_eg1} 
 Let $\Delta$ be a 2-dimensional cell complex, and take $\Delta_\tau\subseteq \Delta$ a subcomplex composed by only two adjacent 2-faces $\sigma_1,\sigma_2\in\Delta$ as above.
 Suppose the edge $\tau=\sigma_1\cap\sigma_2$ has vertices $\gamma,\gamma'\in\Delta$ with valencies $s=3$ and $s=4$, respectively. 
 We define an algebraic transition map $\phi_{12}$ from the symmetric gluing data $[\gluea,\glueb]$ associated to the vertex valences at $\tau$ following the construction \eqref{eq:gluing_data_symmetric} in Example \ref{section:symmetric_gluing_data}.
 We get
 \begin{align*} 
 \phi_{12}\colon\RR[u_{1},v_{1}]/\langle u_{1}^{2}\rangle&\to\RR[u_{2},v_{2}]/\langle v_{2}^{2}\rangle
 \\
 u_{1}&\mapsto -v_{2},\\
 v_{1}&\mapsto u_{2}+v_{2}(-u_{2}^{2}+2u_{2}-1).
 \end{align*}
 Therefore, $
 \idealComplexOne(\tau)=\bigl\langle u_{1}+ v_{2},~v_{1}- u_{2}+v_{2}(u_{2}^{2}-2u_{2}+1),~u_{1}^{2},~v_{2}^{2}\bigr\rangle.
 $
 Hence, we take $\gluea(u_2)=-u_2^2+2u_2-1$, and in particular $d_{\gluea}=2$. 
 By Lemma \ref{lemma:Qd_tau_two_patches}\ref{casea}, replacing $\gluea(u_2)$
 in \eqref{eq:a_basis_of_Id_tau_totaldegree} we get a basis for $\subSpaceOne_{d}(\tau)$, and
 for $d\geqslant 3$, the dimension of the quotient space is $\dim \quotientSpaceOne_{d}(\tau)=2d+3$.
 If we replace $\quotientSpaceOne_{d}(\tau)$ in \eqref{eq:dim_G_Mtau_by_chain_complex}, we get that for any $d\geqslant 3$, the dimension
 \[
 \dim G^{1}_{d}\bigl(\Delta_\tau,\Phi\bigr)=2\binom{d+2}{2}-(2d+3)=d^{2}+d-1.
 \]
 In the bidegree $(d,d)$ case, by Lemma \ref{lemma:Qd_tau_two_patches}\ref{caseb}, a basis of $\subSpaceOne_{(d,d)}(\tau)$ can be obtained by replacing $\gluea(u_2)$ into the polynomials in \eqref{eq:bidbasisJtau}, and if $d\geqslant 2$ we have $\dim \quotientSpaceOne_{(d,d)}(\tau)=2d+3$. 
 Hence, the dimension formula for the $G^{1}$-spline space is 
 \begin{align*}
 \dim G^{1}_{(d,d)}\bigl(\Delta_\tau,\Phi\bigr)
 &=\dim \totalSpace_{(d,d)}(\sigma_{1})+\dim \totalSpace_{(d,d)}(\sigma_{2})-\dim \quotientSpaceOne_{(d,d)}(\tau)\\
 &=2(d+1)^{2}-(2d+3)=2d^{2}+2d-1,
 \end{align*}
 whenever $d\geqslant 2$.\hfill$\diamond$
\end{example}
\subsection{$G^r$-splines on the star of a vertex}\label{sec:star}
If $\Delta$ is a 2-dimensional cell complex, and $\gamma\in\Delta_0^\circ$ is an interior vertex of $\Delta$, then we define $\Delta_\gamma\subseteq \Delta$ as the subcomplex composed by all the faces of $\Delta$ containing $\gamma$. Recall that we always assume that a face of $\Delta$ contains all its faces.
We assume that the number of 2-faces $\sigma_t$ in $\Delta_\gamma$ is $s\geqslant 3$, and 
that $\sigma_t\cap\sigma_{t+1}=\tau_t\in\bigl(\Delta_\gamma\bigr)_1^\circ$ is an edge for every $t=1,\dots,s$, where we take $\sigma_{s+1}=\sigma_1$. 
\begin{lemma}\label{lemma:lemma_5_5}
 If $\bigl(\Delta_\gamma,\Phi\bigr)$ is a $G^r$-domain for some $r\geqslant 0$, then the homology terms of the chain complexes $\subSpace_{d,\bullet}$ and $\quotientSpace_{{d},\bullet}$ of $\bigl(\Delta_{\gamma},\Phi\bigr)$ in \eqref{eq:complex2D} satisfy $H_{0}(\quotientSpace_{d,\bullet})=0$ and $H_{1}(\quotientSpace_{d,\bullet})=H_{0}(\subSpace_{d,\bullet})$, for both total degree and bidegree case, for any degree $d\geqslant 0$.
\end{lemma}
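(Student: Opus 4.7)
The plan is to combine Lemma \ref{lemma:HomologyOfambRing} applied to the star $\Delta_\gamma$ with the long exact sequence \eqref{eq:LES_of_Homologies}. The goal is to show that $H_0(\totalSpace_{d,\bullet})$ and $H_1(\totalSpace_{d,\bullet})$ both vanish; once this is established, the two claims fall out immediately from the tail of \eqref{eq:LES_of_Homologies}.

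To apply Lemma \ref{lemma:HomologyOfambRing}, I first note that since $\gamma$ is an interior vertex of a cell complex homeomorphic to a $2$-manifold with boundary, the subcomplex $\Delta_\gamma$ is homeomorphic to a closed disk, with $\partial\Delta_\gamma$ as the boundary circle. This gives $H_2(\Delta_\gamma,\partial\Delta_\gamma;\RR)=\RR$ and $H_0(\Delta_\gamma,\partial\Delta_\gamma;\RR)=H_1(\Delta_\gamma,\partial\Delta_\gamma;\RR)=0$. Moreover, every 2-face $\sigma_t$ in the star meets $\partial\Delta_\gamma$ in a single connected arc, so $m(\sigma_t)=1$ for every $t$, and no 2-face is contained in the interior, so the index set $\Delta_\gamma'$ of Lemma \ref{lemma:HomologyOfambRing} is empty.

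Feeding this into Lemma \ref{lemma:HomologyOfambRing} (with the hypothesis $s\geqslant 3>1$ clearly holding), the degree-$0$ pieces of $H_0$ and $H_1$ of $\totalSpace_{d,\bullet}$ vanish because $\Delta_\gamma$ is a disk, while the degree-$k$ pieces for $k\geqslant 1$ vanish because both $\Delta_\gamma'=\emptyset$ and $m(\sigma)-1=0$. Summing over degrees gives $H_0(\totalSpace_{d,\bullet})=H_1(\totalSpace_{d,\bullet})=0$, and the same argument works verbatim in the bidegree case.

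The long exact sequence \eqref{eq:LES_of_Homologies} then collapses to
\[
0 \longrightarrow H_1(\quotientSpace_{d,\bullet}) \longrightarrow H_0(\subSpace_{d,\bullet}) \longrightarrow 0 \longrightarrow H_0(\quotientSpace_{d,\bullet}) \longrightarrow 0,
\]
from which $H_0(\quotientSpace_{d,\bullet})=0$ and $H_1(\quotientSpace_{d,\bullet})\cong H_0(\subSpace_{d,\bullet})$ follow at once. I do not anticipate any substantive obstacle; the only point worth underlining is the topological claim that the star of an interior vertex in a $2$-manifold with boundary is a disk, as that is precisely what makes $\Delta_\gamma'$ empty and $m(\sigma_t)=1$ for every $t$.
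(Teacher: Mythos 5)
Your proposal is correct and follows essentially the same route as the paper: apply Lemma \ref{lemma:HomologyOfambRing} to the vertex star to obtain $H_{0}(\totalSpace_{d,\bullet})=H_{1}(\totalSpace_{d,\bullet})=0$, and then read both claims off the tail of the long exact sequence \eqref{eq:LES_of_Homologies}. Your explicit verification that $\Delta_\gamma'=\emptyset$ and $m(\sigma_t)=1$ (the star being a disk) simply spells out details the paper's proof leaves implicit.
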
 
\begin{proof}
 Since $\Delta_\gamma$ has only one component, then $\Delta_\gamma'=\Delta_\gamma\setminus\partial\Delta_\gamma=\emptyset$ and by Lemma \ref{lemma:HomologyOfambRing}, we have that 
 $ H_{0}(\totalSpace_{d,\bullet})=H_{1}(\totalSpace_{d,\bullet})=0$ and $H_{2}(\totalSpace_{d,\bullet})\cong\RR$.
 Then, by the long exact sequence of homologies \eqref{eq:LES_of_Homologies} we deduce that 
 $H_{0}(\quotientSpace_{d,\bullet})=0$, and $H_{1}(\quotientSpace_{d,\bullet})\cong H_{0}(\subSpace_{d,\bullet})$.
\end{proof}
Recall that taking $r=1$ in \eqref{eq:idealJ}, we have that $\subSpaceOne_d(\gamma)=\totalSpace_d(\gamma)\cap \mathcal{I}^1(\gamma)$, where 
$\mathcal{T}_d(\gamma)=\bigl\{\sum_{t=1}^s f_{t}\colon f_t=f_{\sigma_t}, \;\text{for }\sigma_i\in\bigl(\Delta_\gamma\bigr)_2\bigr\}$ as defined in \eqref{eq:ringT}.
\begin{lemma}\label{lemma:H0_Igamma_is_0_symm_gluing}
 Following the notation above, let $(\Delta_{\gamma},\Phi)$ be a $G^1$-domain, where $\Phi$ is a collection of $G^1$-algebraic transition maps given by symmetric gluing data $[\gluea,\glueb]$ (as in \eqref{eq:gluing_data_symmetric} in Example \ref{section:symmetric_gluing_data}). 
 If $\subSpaceOne_{d,\bullet}$ and $\subSpaceOne_{(d,d),\bullet}$ are the chain complexes of $\Delta_\gamma$ in total and bidegree, respectively, then $H_{0}\bigl(\subSpaceOne_{d,\bullet}\bigr)=0$ for $d\geqslant 4$, and $H_{0}\bigl(\subSpaceOne_{(d,d),\bullet}\bigr)=0$ for $d\geqslant 3$.
\end{lemma}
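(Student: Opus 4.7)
The conclusion $H_{0}(\subSpaceOne_{d,\bullet})=0$ is equivalent to the surjectivity of
\begin{equation*}
\delta_{1}\colon \bigoplus_{t=1}^{s}\subSpaceOne_{d}(\tau_{t}) \longrightarrow \subSpaceOne_{d}(\gamma),
\end{equation*}
since $\gamma$ is the unique interior vertex of $\Delta_{\gamma}$ and the chain complex $\subSpaceOne_{d,\bullet}$ is concentrated in degrees $0$ and $1$. Given $h=\sum_{t=1}^{s}h_{t}\in \subSpaceOne_{d}(\gamma)=\totalSpace_{d}(\gamma)\cap \idealComplexOne(\gamma)$ with $h_{t}\in\ambRing_{d}(\sigma_{t})$, my plan is to construct edge-local preimages $g_{t}\in\subSpaceOne_{d}(\tau_{t})$ with $\sum_{t}\epsilon_{t}g_{t}=h$, where $\epsilon_{t}\in\{\pm 1\}$ is the cellular boundary sign at $\tau_{t}$. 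The bidegree statement is obtained by the identical argument with $\ambRing_{d}(\sigma_{t})$ replaced by $\ambRing_{(d,d)}(\sigma_{t})$.

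The strategy is to exploit the explicit generating sets of $\idealComplexOne(\gamma)$ in Lemmas \ref{lemma:dim_F_gamma_nonsingular} (for $s\neq 4$) and \ref{lemma:dim_F_gamma_singular} (for $s=4$). The key observation is that each listed generator uses variables from at most two adjacent patches $\sigma_{i},\sigma_{i+1}$, and therefore already belongs to $\subSpaceOne_{d}(\tau_{i})$ as soon as $d$ can accommodate its degree. I would peel off $h$ in two stages. First, absorb the order-two-or-higher part of $h$ at $\gamma$: multiples of $u_{i}^{2}$, $u_{i}v_{i}$, $v_{i}^{2}$ (respectively $u_{i}^{2}$, $v_{i}^{2}$ when $s=4$) can be written as local edge generators of $\subSpaceOne_{d}(\tau_{i})$ or $\subSpaceOne_{d}(\tau_{i-1})$. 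Second, match the remaining "linear at $\gamma$" piece of $h$ using the generators $u_{i}+v_{i+1}$ and $v_{i}-u_{i+1}-a_{0}v_{i+1}$, or $v_{i}-u_{i+1}$ when $s=4$, where $a_{0}$ is the constant term of $\gluea$. Concretely, this amounts to propagating the lift $g_{t}$ cyclically around $\gamma$: fix $g_{1}^{+}\in\ambRing_{d}(\sigma_{1})$ freely, read $g_{s}^{-}$ off the equation on $\sigma_{1}$, impose $g_{1}\in\idealComplexOne(\tau_{1})$ to determine $g_{1}^{-}$, and continue around the vertex.

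The main obstacle will be checking that this propagation closes consistently after one full turn around $\gamma$. The closure is equivalent to the cyclic composition of transition maps $\phi_{s,1}\circ\phi_{s-1,s}\circ\cdots\circ\phi_{1,2}$ acting as the identity on $1$-jets at $\gamma$, which is precisely the compatibility condition \ref{cond3} of Definition \ref{def:compatibility_conditions_alg} applied to the triples $(\sigma_{1},\sigma_{t},\sigma_{1})$ at $\gamma$; for our particular $h$ the closure then follows from $h\in\idealComplexOne(\gamma)$, which forces the linear data at $\gamma$ to respect this cocycle. A secondary obstacle, responsible for the thresholds $d\geq 4$ (total degree) and $d\geq 3$ (bidegree), is degree bookkeeping: since $\gluea$ has degree up to $2$ for symmetric gluing data, the generator $v_{i}-u_{i+1}-v_{i+1}\gluea(u_{i+1})$ carries degree up to $3$, and multiplying by additional linear factors during the propagation produces contributions that must still fit inside $\ambRing_{d}(\sigma_{t})$. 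The bidegree version imposes a weaker constraint per variable and explains the milder bound $d\geq 3$; the singular case $s=4$ is handled analogously using the alternate generators from Lemma \ref{lemma:dim_F_gamma_singular}, which lack $u_{i}v_{i}$ and require a slightly different but lower-degree reduction still fitting the stated thresholds.
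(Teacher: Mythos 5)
Your reduction of the statement to the surjectivity of $\delta_{1}\colon\bigoplus_{t}\subSpaceOne_{d}(\tau_{t})\to\subSpaceOne_{d}(\gamma)$ and your plan to use the explicit generators of $\idealComplexOne(\gamma)$ from Lemmas \ref{lemma:dim_F_gamma_nonsingular} and \ref{lemma:dim_F_gamma_singular} match the paper's strategy, but your key step is false and it hides exactly the non-trivial part of the argument. The claim that multiples of $u_{i}^{2}$, $u_{i}v_{i}$, $v_{i}^{2}$ ``can be written as local edge generators of $\subSpaceOne_{d}(\tau_{i})$ or $\subSpaceOne_{d}(\tau_{i-1})$'' fails for $u_{i}v_{i}$: modulo $\idealComplexOne(\tau_{i})$ one has $u_{i}\equiv -v_{i+1}$ and $v_{i}\equiv u_{i+1}+v_{i+1}\gluea(u_{i+1})$, so $u_{i}v_{i}\equiv -u_{i+1}v_{i+1}\neq 0$, and $u_{i}v_{i}$ is likewise a nonzero normal form modulo $\idealComplexOne(\tau_{i-1})$; the same computation shows the linear generator $v_{i}-u_{i+1}-a_{0}v_{i+1}$ is not in $\subSpaceOne_{d}(\tau_{i})$ either when $\gluea$ is nonconstant. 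Producing these elements in $\img(\delta_{1})$ is the heart of the paper's proof: one combines $v_{i}^{2}$ (coming from edge $\tau_{i-1}$), $u_{i+1}^{2}$ (from $\tau_{i+1}$) and $v_{i}^{2}-\bigl(u_{i+1}^{2}+2u_{i+1}v_{i+1}\gluea(u_{i+1})\bigr)$ (from $\tau_{i}$, which requires $d\geqslant d_{\gluea}+2$), then subtracts $u_{i+1}^{2}v_{i+1}$ and $u_{i+1}^{3}v_{i+1}$ (this is where $d\geqslant 4$ enters in total degree) to isolate $a_{0}\,u_{i+1}v_{i+1}$, and finally uses that the constant term $a_{0}=2\cos(2\pi/s)$ of the symmetric gluing data is nonzero precisely because $s\neq 4$; for $s=4$ the generator $u_{i}v_{i}$ is absent and instead one must produce $v_{i}-u_{i+1}$, using $\gluea=a_{2}u^{2}$ and $u_{i+1}^{2}v_{i+1}\in\img(\delta_{1})$. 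None of this valence-versus-$a_{0}$ interplay appears in your proposal, and the two cases are not ``analogous'' in the way you suggest.

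The second part of your plan, closing the cyclic propagation via compatibility condition \ref{cond3}, cannot substitute for this computation. Condition \ref{cond3} holds for every degree $d$, yet the conclusion is false for small $d$: for instance with $d_{\gluea}=2$ and $d=1$ the only degree-one elements of $\subSpaceOne_{1}(\tau_{t})$ are the multiples of $u_{t}+v_{t+1}$, while $v_{t}-u_{t+1}-a_{0}v_{t+1}\in\subSpaceOne_{1}(\gamma)$, so $\delta_{1}$ is not surjective and $H_{0}\bigl(\subSpaceOne_{1,\bullet}\bigr)\neq 0$. Hence any correct proof must carry out the degree bookkeeping you defer, and must locate where the hypotheses $d\geqslant 4$ (respectively $d\geqslant 3$ in bidegree) and the structure of the symmetric gluing data are used; as written, the proposal does not establish the lemma.
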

\begin{proof}
 Taking $r=1$ and $\Delta_\gamma$ in \eqref{eq:complex2D}, we have that $H_{0}(\subSpaceOne_{d,\bullet})=\subSpaceOne_d(\gamma)/\img(\delta_1)$.
 If $s$ is the number of edges $\tau_i$ containing $\gamma$ as above, then we need to show that $\delta_{1}$ is surjective, where
 \[
 \delta_{1}\colon\bigoplus_{t=1}^{s}\subSpaceOne_{d}(\tau_{t})\to \subSpaceOne_{d}(\gamma).
 \]
 From Lemma \ref{lemma:Qd_tau_two_patches}\ref{casea} we know that if $d_{\gluea}=\deg \gluea\geq 1 $ then
 \begin{subnumcases}{\subSpaceOne_{d}(\tau_{t})=\spanset}
 u_{t}^{i}v_{t}^{j},\; u_{t+1}^{j}v_{t+1}^{i}, & for~$2\leqslant i\leqslant d~\mbox{and}~i+ j\leqslant d,$\nonumber\\
 u_{t}v_{t}^{i}+u_{t+1}^{i}v_{t+1},&for~$0\leqslant i\leqslant d-1,$\label{eq:genJtau2}\\
 v_{t}^{i}-\bigl(u_{t+1}^{i}+i\, u_{t+1}^{i-1}v_{t+1}\,\gluea(u_{t+1})\bigr), &for~$1\leqslant i \leqslant d-d_{\gluea}$.\hfill\label{eq:genJtau3}
 \end{subnumcases}
 On the other hand, $\subSpaceOne_d(\gamma)=\totalSpace_d(\gamma)\cap \mathcal{I}^1(\gamma)$ and we have two cases depending on the number $s$ of edges at $\gamma$. 
 If $s\neq 4$, by Lemma \ref{lemma:dim_F_gamma_nonsingular} we have that
 \[
 \idealComplexOne(\gamma)
 =
 \spanset\bigl\{u_{t}+v_{t+1},\; v_{t}-u_{t+1}-a_{0}v_{t+1},\; u_{t}^{2},\; u_{t}v_{t},\; v_{t}^{2}\colon t=1,\dots,s\bigr\},
 \]
 where $a_{0}$ is the constant term in $\gluea(u)=\sum_{k=0}^{d_{\gluea}}a_{k}u^{k}$.
 If $s=4$, Lemma \ref{lemma:dim_F_gamma_singular} implies
 \[
 \idealComplexOne(\gamma)=\spanset\bigl\{u_{t}+v_{t+1},\; v_{t}-u_{t+1},\; u_{t}^{2},\; v_{t}^{2}\colon t=1,2,3,4\bigr\}
 .\]
 Notice the generators of $\idealComplexOne(\gamma)$ are polynomials in $\totalSpace_d(\gamma)$, hence they generate $\subSpaceOne_d(\gamma)$, and it is enough to check that they all are in $\img(\delta_1)$.
 
 Notice that $u_{t}^{i}v_{t}^{j}\in \img(\delta_{1})$, for every $t=1,\dots, s$ whenever $i\geqslant 2$ or $j\geqslant 2$; 
 taking $i=0$ in \eqref{eq:genJtau2}, we see that $u_t+v_{t+1}\in\img(\delta_1)$.
 
 By hypothesis $d\geqslant 4\geqslant d_{\gluea}+2$, hence $d-d_{\gluea}\geqslant 2$. 
 Thus, if $s\neq 4$, we have that $v_{t}^{2}-\bigl(u_{t+1}^{2}+2u_{t+1}v_{t+1}\gluea(u_{t+1})\bigr)\in\img(\delta_{1})$.
 But $v_{t}^{2},~u_{t+1}^{2}\in\img(\delta_{1})$, so $2u_{t+1}v_{t+1}\gluea(u_{t+1})=2u_{t+1}v_{t+1}(a_{0}+a_{1}u_{t+1}+a_{2}u_{t+1}^{2})\in\img(\delta_{1})$. 
 Since $d\geqslant 4$, then $u_{t}^{2}v_{t},~u_{t}^{3}v_{t}\in\img(\delta_{1})$. 
 This implies $u_{t}v_{t}\in\img(\delta_{1})$, and taking $i=1$ in \eqref{eq:genJtau3} we see that also $v_{t}-u_{t+1}-a_{0}v_{t+1}\in\img(\delta_{1})$.
 It remains to verify that $v_{t}-u_{t+1}\in\img(\delta_{1})$ if $s=4$. But in that case $\gluea(u_{t+1})=a_{2}u_{t+1}^{2}$.
 Hence, $u_{t}v_{t}^{2}\in\img(\delta_{1})$ implies $v_{t}-u_{t+1}\in\img(\delta_{1})$. 
 This proves that $H_0\bigl(\subSpaceOne_{d,\bullet}\bigr)=0$. 
 
 By using Lemma \ref{lemma:Qd_tau_two_patches}\ref{caseb} and the corresponding results for the bidegree case $(d,d)$ in Lemma \ref{lemma:dim_F_gamma_nonsingular} and \ref{lemma:dim_F_gamma_singular}, an analogous argument implies $H_0\bigl(\subSpaceOne_{(d,d),\bullet}\bigr)=0$ for any $d\geq 3$.
\end{proof}
\begin{theorem}\label{theor:dimG1}
 For a $G^r$-domain $\bigr(\Delta_{\gamma},\Phi\bigr)$, the dimension of the $G^r$ spline space is given by
 \begin{equation}\label{eq:dimEulerC}
 \dim G^{r}_{d}\bigl(\Delta_{\gamma},\Phi\bigr)=\chi(\quotientSpace_{d,\bullet})+\dim H_{1}(\quotientSpace_{d,\bullet}).
 \end{equation}
 In particular, $\chi(\quotientSpace_{d,\bullet})$ is a lower bound of $\dim G^{r}_{d}((\GrDomain)_{\gamma})$. The corresponding formula follows for $G^{r}_{(d,d)}\bigl(\Delta_{\gamma},\Phi\bigr)$ in the bidegree case.
 If in addition $r=1$ and $\Phi$ is given by symmetric gluing data $[\gluea,\glueb]$, then 
 \begin{align}
 \dim G^{1}_{d}\bigl(\Delta_{\gamma},\Phi\bigr)&=\chi(\quotientSpaceOne_{d,\bullet}),\;\mbox{ for } d\geqslant 4\text{ \; and},\label{eq:dim_formula_G1_Mgamma_symm_gluing}\\
 \dim G^{1}_{(d,d)}\bigl(\Delta_{\gamma},\Phi\bigr)&=\chi(\quotientSpaceOne_{(d,d),\bullet}),\;\mbox{ for } d\geqslant 3. \nonumber 
 \end{align}
 If $d_\gluea=\deg \gluea$, then
 \begin{subnumcases}
 {\chi(\quotientSpaceOne_{d,\bullet})=}
 s\textstyle{\binom{d+2}{2}}-s(2d+d_{\gluea}+1)+3,&~if $s\neq 4,\text{\; and } d\geqslant d_{\gluea}+1$,\label{eq:chi_s_not_4}\\
 2(d^{2}-d-2),&~if $s=4,\; \gluea\neq 0 \text{\;and } d\geqslant 2$,\label{eq:s4}\\
 2(d^{2}-d+2),&~if $s=4,\; \gluea=0, \text{\;and } d\geqslant 2$\label{eq:s4a0},
 \end{subnumcases}
 and 
 \begin{equation*}
 \chi\bigl(\quotientSpaceOne_{(d,d),\bullet}\bigr)=\begin{cases}
 s(d+1)^{2}-s(2d+d_{\gluea}+1)+3,& \mbox{if } s\neq 4 \text{\; and } d\geqslant d_{\gluea},\\
 4(d^{2}-1),& \mbox{if } s=4,\; \gluea\neq 0 \text{\;and } d\geqslant 1,\\
 4d^{2},& \mbox{if } s=4,\; \gluea=0, \text{\;and } d\geqslant 1,
 \end{cases}
 \end{equation*}
 where $s$ is the number of 2-faces in $\Delta_\gamma$.
\end{theorem}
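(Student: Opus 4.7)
The plan is to assemble the statement from Corollary~\ref{cor:dim_Grd_equals_chi_plus_H}, the long exact sequence \eqref{eq:LES_of_Homologies}, and the local dimension computations already established in Section~\ref{section:spline_complex_dim2}.

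First I would specialize Corollary~\ref{cor:dim_Grd_equals_chi_plus_H} to $n=2$, giving
\[
\dim G^{r}_{d}(\Delta_{\gamma},\Phi) = \chi(\quotientSpace_{d,\bullet}) + \dim H_{1}(\quotientSpace_{d,\bullet}) - \dim H_{0}(\quotientSpace_{d,\bullet}),
\]
and then invoke Lemma~\ref{lemma:lemma_5_5}, which says $H_{0}(\quotientSpace_{d,\bullet})=0$ on the star of a vertex. This immediately yields \eqref{eq:dimEulerC}, and since $\dim H_{1}\geqslant 0$ the claim that $\chi$ is a lower bound follows. The same argument applies verbatim to the bigraded complex $\quotientSpace_{(d,d),\bullet}$.

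For the refined identity $\dim G^{1}_{d}(\Delta_{\gamma},\Phi) = \chi(\quotientSpaceOne_{d,\bullet})$ under symmetric gluing data, I would use the second conclusion of Lemma~\ref{lemma:lemma_5_5}, namely $H_{1}(\quotientSpaceOne_{d,\bullet}) \cong H_{0}(\subSpaceOne_{d,\bullet})$, combined with Lemma~\ref{lemma:H0_Igamma_is_0_symm_gluing} which makes this group vanish for $d\geqslant 4$ (and for $d\geqslant 3$ in bidegree). The remaining task is then to compute $\chi(\quotientSpaceOne_{d,\bullet})$ explicitly by unpacking \eqref{eq:Euler_char_Qd} on $\Delta_{\gamma}$, which has exactly $s$ 2-faces, $s$ interior edges $\tau_{t}$, and the single interior vertex $\gamma$. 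The 2-face contribution is $s\binom{d+2}{2}$ (respectively $s(d+1)^{2}$ in bidegree). Lemma~\ref{lemma:Qd_tau_two_patches} supplies $\dim\quotientSpaceOne_{d}(\tau_{t})=2d+d_{\gluea}+1$ when $d_{\gluea}\geqslant 1$, while Corollary~\ref{Cor:Qd_tau_two_patches_planar} handles the case $\gluea$ constant with $\dim\quotientSpaceOne_{d}(\tau_{t})=2d+1$ (this is what gives rise to \eqref{eq:s4a0}, since $s=4$ with a valence-$4$ neighbor forces $\gluea\equiv 0$). For the vertex, Lemma~\ref{lemma:dim_F_gamma_nonsingular} gives $\dim\quotientSpaceOne_{d}(\gamma)=3$ when $s\neq 4$, and Lemma~\ref{lemma:dim_F_gamma_singular} gives $\dim\quotientSpaceOne_{d}(\gamma)=4$ when $s=4$. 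Substituting and simplifying produces \eqref{eq:chi_s_not_4}, \eqref{eq:s4}, and \eqref{eq:s4a0}, with the bidegree analogues obtained in the same way.

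The main technical obstacle will be the boundary degree in case \eqref{eq:s4}: for $s=4$ with $\gluea\neq 0$, the symmetric construction forces $\gluea(u)=a_{2}u^{2}$ so $d_{\gluea}=2$, and Lemma~\ref{lemma:Qd_tau_two_patches}(a) only guarantees the edge dimension formula for $d\geqslant d_{\gluea}+1=3$; yet the claim is that \eqref{eq:s4} already holds at $d=2$ (respectively $d=1$ in bidegree, where Lemma~\ref{lemma:Qd_tau_two_patches}(b) requires $d\geqslant d_{\gluea}=2$). I would bridge this gap by a direct Gr\"obner-remainder computation at the boundary value, mirroring the proof of Lemma~\ref{lemma:Qd_tau_two_patches} but with the explicit form $\gluea(u)=a_{2}u^{2}$: the monomial list \eqref{eq:a_basis_of_Id_tau_totaldegree} still has the right cardinality in this regime, and the leading-term argument still shows independence, which is enough to pin down $\dim\quotientSpaceOne_{2}(\tau)$ to the expected value $2\cdot 2+2+1=7$ and close the dimension count.
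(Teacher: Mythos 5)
Your proposal is correct and follows essentially the same route as the paper: \eqref{eq:dim2cellcase} together with Lemma \ref{lemma:lemma_5_5} gives \eqref{eq:dimEulerC}, Lemma \ref{lemma:H0_Igamma_is_0_symm_gluing} makes $H_{1}(\quotientSpaceOne_{d,\bullet})$ vanish for $d\geqslant 4$ (resp.\ $d\geqslant 3$ in bidegree), and the Euler characteristic is assembled from \eqref{eq:Euler_char_Qd} using Lemma \ref{lemma:Qd_tau_two_patches}, Corollary \ref{Cor:Qd_tau_two_patches_planar}, Lemma \ref{lemma:dim_F_gamma_nonsingular} and Lemma \ref{lemma:dim_F_gamma_singular}, exactly as in the paper's proof. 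Your additional Gr\"obner-remainder verification of the edge term at the boundary values $d=2$ (total degree) and $d=1$ (bidegree) in the case $s=4$, $\gluea\neq 0$ is a welcome extra step, since the paper's proof invokes Lemma \ref{lemma:Qd_tau_two_patches} only for $d\geqslant d_{\gluea}+1$ (resp.\ $d\geqslant d_{\gluea}$); just be sure that the check computes the span of the remainders of a monomial basis of $\totalSpace_{d}(\tau)$ (as in the lemma's proof, via Lemma \ref{lemma:a_basis_of_Fd}), because the linear independence of the candidate basis of $\subSpaceOne_{d}(\tau)$ by itself only bounds $\dim\quotientSpaceOne_{d}(\tau)$ from above.
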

\begin{proof}
 From \eqref{eq:dim2cellcase} we know that $\dim G^{r}_{d}(\GrDomain)=\chi(\quotientSpace_{d,\bullet})+\dim H_{1}(\quotientSpace_{d,\bullet})-\dim H_{0}(\quotientSpace_{d,\bullet})$. By Lemma \ref{lemma:lemma_5_5}, $\dim H_{0}(\quotientSpace_{d,\bullet})=0$. This implies \eqref{eq:dimEulerC}.
 In addition, if $r=1$ and $\Phi$ is given by gluing data, then by Lemma \ref{lemma:H0_Igamma_is_0_symm_gluing} in the total degree case gives $\dim H_{1}(\quotientSpace_{d,\bullet})=0$ for every $d\geqslant 4$, which implies \eqref{eq:dim_formula_G1_Mgamma_symm_gluing}.
 The bidegree case follows similarly.
 
 The explicit formulas for $\chi\bigl(\quotientSpaceOne_{d,\bullet}\bigr)$ follow from \eqref{eq:Euler_char_Qd}. In the case of a vertex star with $s$ faces $\sigma_t\in(\Delta_\gamma)_2$ and $s$ interior edges $\tau_t$, \eqref{eq:Euler_char_Qd} can be rewritten as
 \begin{equation*}
 \chi\bigl(\quotientSpaceOne_{d,\bullet}\bigr)=\sum_{t=1}^{s}\dim \quotientSpaceOne_{d}(\sigma_{t})-\sum_{t=1}^{s}\dim\quotientSpaceOne_{d}(\tau_{t})+\dim\quotientSpaceOne_{d}(\gamma).
 \end{equation*} 
 By construction, $\dim\quotientSpaceOne_d(\sigma_t)=\binom{d+2}{2}$ for each $t=1,\dots, s$.
 By Lemma \ref{lemma:Qd_tau_two_patches}, if $d\geqslant d_{\gluea}+1\geqslant 2$ then $\dim \quotientSpaceOne_d(\tau)=2d+d_{\gluea}+1$ for each interior edge $\tau$ in $\Delta_{\gamma}$, and therefore $\sum_{t=1}^{s}\quotientSpaceOne_d(\tau_{t})=s(2d+d_{\gluea}+1)$.
 Furthermore, 
 from Lemma \ref{lemma:dim_F_gamma_nonsingular}, 
 if $s\neq 4$ and $d\geqslant 1$, then by \eqref{eqn:dim_Fgamma_symm_nonsingular} we have $\dim\quotientSpaceOne_{d}(\gamma)= \dim\quotientSpaceOne_{(d,d)}(\gamma)=3$. 
 Note that $s\neq 4$ implies that $d_{\gluea}\geqslant 1$. 
 This proves \eqref{eq:chi_s_not_4}.

Assume now $s=4$.
 If $d\geqslant 2$, Lemma \ref{lemma:dim_F_gamma_singular} implies
 $\dim\quotientSpaceOne_{d}(\gamma)=4$. 
 Additionally, for any edge $\tau_t=\sigma_t\cap\sigma_{t+1}\in(\Delta_\gamma)_1^\circ$, we can write the gluing data as in \eqref{eq:symgluedata}, and we get $\gluea(u_{t+1}) = 
 - 2\cos\left(\frac{2\pi}{s_t'}\right) u_{t+1}^2$, where $s_t'$ is the valence of the vertex $\gamma_t'\neq \gamma$ of $\tau_t$ in $\Delta$. 
 
 Then we have two cases, either $s'\neq 4$ and $d_{\gluea}=2$, or $s'=4$ and $\gluea=0$.
 If $d_{\gluea}=2$, then Lemma \ref{lemma:Qd_tau_two_patches} implies $\dim \quotientSpaceOne_d(\tau_t)=2d+3$ for every $d\geqslant 3$. 
In the second case $\gluea=0$, by Corollary \ref{Cor:Qd_tau_two_patches_planar} we know that $\dim \quotientSpaceOne_d(\tau_t)=2d+1$ for every $d\geqslant 0$. These two cases imply \eqref{eq:s4} and \eqref{eq:s4a0}, respectively.
The corresponding results in the bidegree case lead to the formulas for $\chi\bigl(\quotientSpaceOne_{(d,d)},\bullet\bigr)$.
\end{proof}
\begin{example}\label{eg:three_patches_symmetric}
Take $\Delta_\gamma$ as the $2$-dimensional cell complex in Figure \ref{fig:triangle}, which is composed by $s=3$ quadrilaterals $\sigma_i$ sharing a common vertex $\gamma$. 
Let $\Phi$ be the collection of algebraic transition maps obtain from  taking symmetric gluing data along each of the edges $\tau_i=[\gamma,\delta_i]$ of $\Delta_\gamma$. 
In this case, the valence of the vertex $\gamma$ is 3, and we will assume the valence at each vertex $\delta_i$ is 4. 
If we take the gluing data $\gluea$ as the gluing data in \eqref{eq:symgluedata}, we see that $\gluea\neq 0$ and it is a polynomial of degree $d_{\gluea}=2$. 
Then, Theorem \ref{theor:dimG1} implies
\begin{equation*}
\dim G_d^1\bigl(\Delta_\gamma,\Phi\bigr)
=
\frac{3}{2}(d^{2}-d-2),
\end{equation*}
for every $d\geqslant 4$, and in the bidegree case
\begin{equation*}
\dim G_{(d,d)}^1\bigl(\Delta_\gamma,\Phi\bigr)=3d^{2}-3, 
\end{equation*}
for every $d\geqslant 3$.\hfill$\diamond$
\begin{figure}
	\centering
	\includegraphics[scale=0.55]{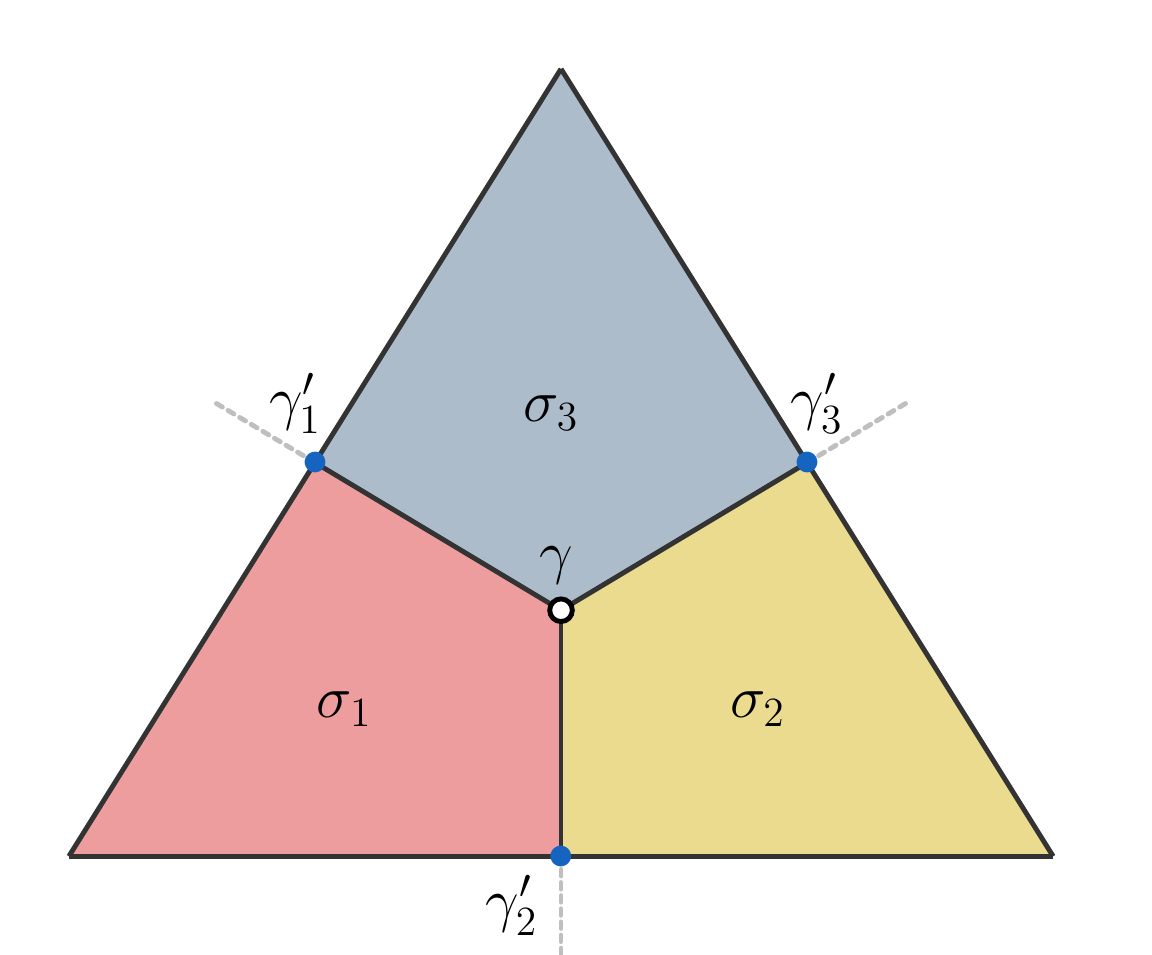}
	\caption{2-dimensional cell complex considered in Example \ref{eg:three_patches_symmetric}, which is the star of the vertex $\gamma$. The valence of $\gamma$ is 3, and we consider valence 4 at each vertex $\delta_i$, for $i=1,2,3$.} \label{fig:triangle}
\end{figure}
\end{example}

\subsection{A general $2$-dimensional $G^{r}$-domain}\label{section:computation_general_Gr_domain}
For a general $2$-dimensional cell complex $\Delta$, we can use \eqref{eq:dim2cellcase} and the formulas for the Euler characteristic equations from Theorem \ref{theor:dimG1} to estimate the dimension of a $G^{r}_{d}(\GrDomain)$ spline space.
Using formula \eqref{eq:dim2cellcase}, we may estimate the dimension of $G^{r}_{d}(\GrDomain)$ by Euler characteristic $\chi(\quotientSpace_{d,\bullet})$. Controlling the homology terms in \eqref{eq:dim2cellcase} is however not direct as shown by the following example where the $G^{r}$-domain cannot be topologically embedded into the plane. We call it a \emph{cube}, because the polyhedral cell complex $\Delta$ we consider is homeomorphic to the boundary of a cube: it has $6$ square faces, $12$ edges and $8$ vertices. The formula \eqref{eq:dim2cellcase} still works in this case. However, it is not necessary that $\dim G^{r}_{d}(\GrDomain)=\chi(\quotientSpace_{d,\bullet})$, even for $r=1$ and $\Phi$ given by symmetric gluing data.

\begin{example}\label{eg:G1_cube}
\begin{figure}
\centering
\includegraphics[height=45mm]{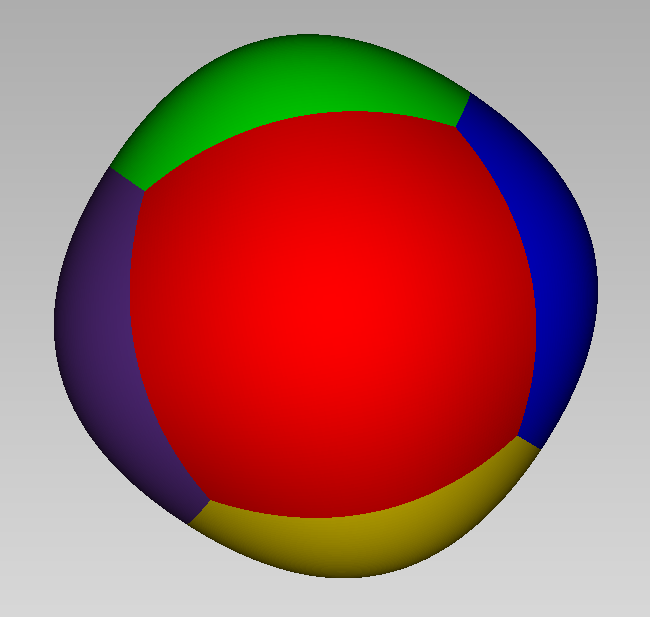}
\caption{\label{fig:cube22}A surface parametrized by degree $(2,2)$ $G^1$-splines over the cube for the gluing data $\gluea(u)=2u-1, \glueb=-1$. See Example \ref{eg:G1_cube}. The surface is interpolating the points $(\pm 1, 0, 0), (0,\pm 1, 0), (0, 0, \pm 1)$ at the center of $6$ faces.}
\end{figure}
We take $\Delta$ as a cell complex homeomorphic to the surface of a cube, so $\Delta$ is composed by six 2-dimensional faces, 12 interior edges, and 8 vertices, each of valence $s=3$. 
Following the notation in \eqref{eq:symgluedata}, the $G^1$-symmetric gluing data at every edge of $\Delta$ is given by
\begin{equation*}
\gluea(u)=-2u+1,~\glueb=-1.
\end{equation*}
In particular, the degree of $\gluea$ is $d_{\gluea}=1$. 
From \eqref{eq:dim2cellcase} we know that $\dim G^{1}_{d}(\GrDomain)=\chi(\quotientSpaceOne_{d,\bullet})+\dim H_{1}(\quotientSpaceOne_{d,\bullet})-\dim H_{0}(\quotientSpaceOne_{d,\bullet})$, and \eqref{eq:Euler_char_Qd} gives
\begin{align*}
	\chi(\quotientSpaceOne_{d,\bullet})=\sum_{\sigma\in\Delta_{2}}\dim \quotientSpaceOne_{d}(\sigma)-\sum_{\tau\in\Delta_{1}^{\circ}}\dim\quotientSpaceOne_{d}(\tau)+\sum_{\gamma\in\Delta_{0}^{\circ}}\dim\quotientSpaceOne_{d}(\gamma).
\end{align*}
By Lemma \ref{lemma:Qd_tau_two_patches}, for each $\tau\in\Delta_{1}^{\circ}$ we have
\begin{equation*}
	\begin{cases}
		\dim\quotientSpaceOne_{d}(\tau)=2d+2,& \mbox{for}~d\geqslant 2,\\
		\dim\quotientSpaceOne_{(d,d)}(\tau)=2d+2,& \mbox{for}~d\geqslant 1, 
	\end{cases}
\end{equation*}
and for each vertex $\gamma\in\Delta_{0}^{\circ}$,Lemma \ref{lemma:dim_F_gamma_nonsingular} implies
\begin{equation*}
\begin{cases}
 \dim\quotientSpaceOne_{d}(\gamma)=3,&\mbox{for}~d\geqslant 3,\\
 \dim\quotientSpaceOne_{(d,d)}(\gamma)=3,&\mbox{for}~d\geqslant 2.
\end{cases}
\end{equation*}
Hence,
\begin{equation*}
\begin{cases}
 \chi(\quotientSpaceOne_{d,\bullet})=3(d^{2}-5d+2),&\mbox{for}~d\geqslant 3,\\
 \chi(\quotientSpaceOne_{(d,d),\bullet})=6(d-1)^{2},&\mbox{for}~d\geqslant 2.
\end{cases}
\end{equation*}
In Table \ref{table:dim}, we put the exact dimension of the space $G^{1}_{d}(\GrDomain)$ which we independently compute using \texttt{Macaulay2} \cite{M2} and the \textsc{Julia} package \texttt{G1Splines.jl}\footnote{\texttt{http://gitlab.inria.fr/AlgebraicGeometricModeling/G1Splines.jl}}. 
The computations in \texttt{Macaulay2} \cite{M2} follow the basis construction described in Algorithm \ref{algorithm:basis_computation_Gr}, while \texttt{G1Splines.jl}  uses the Bernstein representation of splines and the linear relations deduced from sample points on the edges of the partition. 

These computations show that the difference between the exact dimension of the spline space and the Euler characteristic is constant. In the total degree case we have
\begin{equation*}
	\dim G^{1}_{d}(\GrDomain)-\chi(\quotientSpaceOne_{d,\bullet})=12,\quad \mbox{for}~4\leqslant d\leqslant 10, 
\end{equation*}
and in the bidegree case
\begin{equation*}
	\dim G^{1}_{(d,d)}(\GrDomain)=\chi(\quotientSpaceOne_{(d,d),\bullet}),\quad \mbox{for}~2\leqslant d\leqslant 10.
\end{equation*}
This dimension formula in the bidegree case was proved in \cite[Theorem 6.3]{mourrain2016dimension} for any $d\geqslant 1$.
\begin{table}[ht!]
\centering
\renewcommand{\arraystretch}{1.2}
\begin{tabular}{c||c|c|c|c|c|c|c|c|c|c}
 $d$ & 1& 2 & 3 & 4 & 5 & 6 & 7 & 8& 9& 10\\
 \hline
 $\dim G^1_{d}(\GrDomain)$ & 1 & 1 & 1 & 6 & 18 & 36 &60 &90 &126 &168 \\
 \hline 
 $\chi(\quotientSpaceOne_{d,\bullet})$& -6& -12& -12& -6& 6& 24& 48& 78& 114&156\\
 \hline
 $\dim G^1_{(d,d)}(\GrDomain)$ & 1 & 6 & 24 & 54 & 96 & 150 &216 &294 &384 &486
 \\
 \hline 
 $\chi(\quotientSpaceOne_{(d,d),\bullet})$& 0& 6& 24& 54& 96& 150& 216& 294& 384& 486
\end{tabular}
\vspace{0.2cm}
\caption{\label{table:dim} Dimension of the $G^1$-spline spaces from Example \ref{eg:G1_cube}, where the 2-dimensional cell complex $\Delta$ is homeomorphic to the surface of a cube, and the collection $\Phi$ of $G^1$-algebraic transition maps is defined from symmetric gluing data. For a given $d$, the space $G^1_{d}(\GrDomain)$ is the linear space of $G^1$-splines on $\Delta$ of total degree $\leqslant d$, and $G^1_{(d,d)}(\GrDomain)$ is the space of splines of bidegree $\leqslant (d,d)$. For small $d$, the Euler characteristics $\chi(\quotientSpaceOne_{d,\bullet})$ and $\chi(\quotientSpaceOne_{(d,d),\bullet})$ are computed in \texttt{Macaulay2} \cite{M2}.
}\label{table:cube}
\end{table}

We include a parametric surface of degree $(2,2)$ on the cube in Figure \ref{fig:cube22}. 
The coordinate functions are in the spline space $G^1_{(2,2)}(\GrDomain)$ of dimension $6$. They are computed so that the parametric surface interpolates the points $(\pm 1,0,0), ( 0,\pm 1,0), (0,0,\pm 1)$ at the center of the faces of the cube.
\hfill$\diamond$
\end{example}

\section{Concluding remarks}\label{sec:conclusions}
We have made a theoretical and algorithmic contribution to the study of geometrically continuous splines by developing an algebraic framework that extends Billera's algebraic methods for parametric splines. 
This framework has two advantages. 
First, we can use algebraic tools to study geometrically continuous splines. 
In fact, in this paper, we demonstrate this by applying chain complexes and the division algorithm to construct bases and compute dimensions for $G^{r}$-spline spaces. 
Second, $G^{r}$-spline spaces have rich algebraic structures that can be further explored using this framework. Thus, this framework makes it easy for algebraists to explore the exciting research topic of geometrically continuous splines. 

Extending the study from parametric splines to geometrically continuous splines enables us to construct shapes with more complex topology. 
For example, Example \ref{eg:G1_cube} in Section \ref{section:computation_general_Gr_domain} is one such shape that cannot be constructed within the scope of parametric splines. 
However, this flexibility comes at a cost: we need a more complicated algebraic model to describe the spline space. 
While parametric spline spaces have only one coordinate system for the whole domain, $G^{r}$-spline require several coordinate systems, one for each face. As a result, transition maps are needed to define the smoothness condition between different coordinate systems. In Section \ref{sec:Examples}, we have several examples of $G^{r}$-domains having non-trivial transition maps which fit in our framework.
Hence, unlike in the case of parametric splines, the $G^{r}$-spline space $G^{r}(\GrDomain)$ is not always a finitely generated module over a known polynomial ring. In fact, it is not even clear if $G^{r}(\GrDomain)$ is finitely generated as an $\RR$-algebra in general. This will be of interest for future research. 

We have obtained explicit dimension formulas for $G^{r}$-spline spaces over a two-patches configuration and a star of vertex in Section \ref{section:explicit_computations}. To obtain these formulas, we have written the dimension of $G^{r}_{d}(\GrDomain)$ into the sum of Euler characteristic $\chi(\quotientSpace_{d,\bullet})$ and homologies of the chain complex $\quotientSpace_{d,\bullet}$, as defined in Section \ref{section_BSS_deg_d}. Because the transition map does not always preserve degrees between polynomial rings, finding $\chi(\quotientSpace_{d,\bullet})$ and homologies of $\quotientSpace_{d,\bullet}$ is difficult. In fact, the whole Section \ref{section:spline_complex_dim2} is devoted in computing $\chi(\quotientSpace_{d,\bullet})$ for certain kinds of transition maps, where the division algorithm plays an important role in the computation. It may be of interest for future studies to find dimension formulas which work more generally.

The method used for the construction of the $G^r$-basis in Section \ref{section:the_real_engine}, particularly Lemma \ref{lemma:a_basis_of_Fd}, applies to a more general setting and provides a general method to generate certain subspaces of a quotient ring. 
We have implemented Algorithm \ref{algorithm:basis_computation_Gr} in \texttt{Macaulay2} \cite{M2}~ computation in Example \ref{eg:G1_cube}. It is of our interest to develop a software package based on this Algorithm \ref{algorithm:basis_computation_Gr}, which computes a basis for $G^{r}$-spline spaces over any given $G^{r}$-domain.

One issue that remains to be solved is the gluing data in this paper are assumed to be polynomials, while they may be in the form of rational functions as defined in the context of topological surfaces in \cite{mourrain2016dimension}. We are confident that our framework can be extended in the future to cover those cases where transition maps are given by rational gluing data.

\appendix
\section{The chain complex for $G^r$-splines}
\label{section_specializeToBSS}
Let $\Delta$ be a simplicial complex, and $S=\RR[x_{1},\dots,x_{n}]$. 
Following the notation in Section \ref{section:generalized_spline_complex}, we write 
\begin{equation*}
S_{\bullet}=C_{\bullet}(\Delta,\partial\Delta)\otimes_\RR S, 
\end{equation*}
where $C_{\bullet}(\Delta,\partial\Delta)$ 
 is the chain complex of $\Delta$ relative to its boundary $\partial\Delta$. 
For each interior face $\tau\in\Delta_{n-1}^\circ$, we denote by $\ell_{\tau}\in S$ a choice of a degree one polynomial vanishing on $\tau$.
We define $J(\tau)=\langle \ell_{\tau}^{r+1}\rangle\subseteq S$, the ideal generated by $\ell_{\tau}^{r+1}$.
If $\beta\in\Delta_k$, for $k=0,\dots, n-2$, we define 
\begin{equation*}
J(\beta)=\sum_{\tau\supseteq\beta}J(\tau)=\sum_{\tau\supseteq\beta} \langle \ell_{\tau}^{r+1}\rangle. 
\end{equation*}
If $\sigma\in\Delta_n$ we take $J(\sigma)=0$.
The ideals $I(\beta)$ define a sub-complex $I_\bullet$ of $S_{\bullet}$, where the boundary maps $\partial_i$ are the restriction of the differentials of $S_{\bullet}$:
\begin{equation*}
	\label{eqn_idealComplex}
	I_\bullet\colon 0\rightarrow \bigoplus_{\sigma\in\Delta_{n}} J(\sigma)\xrightarrow{\partial_n}\bigoplus_{\tau\in\Delta_{n-1}^\circ}J(\tau) \xrightarrow{\partial_{n-1}} \dots\xrightarrow{\partial_1}\bigoplus_{\gamma\in\Delta^{\circ}_0}J(\gamma)\rightarrow 0.
\end{equation*}
The Billera-Schenck-Stillman spline complex is the quotient $S_{\bullet}/I_{\bullet}$, namely 
\begin{equation*}
	\label{eqn_idealComplex}
	S_\bullet/I_\bullet\colon 0\rightarrow \bigoplus_{\sigma\in\Delta_{n}} S(\sigma)\xrightarrow{\bar\partial_n}\bigoplus_{\tau\in\Delta_{n-1}^\circ}S/J(\tau) \xrightarrow{\bar\partial_{n-1}} \dots\xrightarrow{\bar\partial_1}\bigoplus_{\gamma\in\Delta^{\circ}_0}S/J(\gamma)\rightarrow 0,
\end{equation*}
where $S(\beta)=S$ for each face $\beta\in\Delta$.

Given a $G^{r}$-domain $(\GrDomain)$, for $n$-faces $\sigma_1,\sigma_2\in\Delta_n$ such that $\tau=\sigma_1\cap\sigma_2\in\Delta_{n-1}$, a transition map $\phi_{12}\in\Phi$  $\sigma_1$ to $\sigma_2$ as defined in \eqref{eqn:def_algebraic_transition_maps} is an \emph{identity} if  $u_{1j}\mapsto u_{2j}$ for every $j=1,\dots,n$,
where $X(\sigma_i)$ is the system of coordinates of $\sigma_i$ and  $\mathcal{R}(\sigma_i)$ is the the polynomial ring in the coordinates $(u_{i1},\dots, u_{in})$ of $X(\sigma_i)$.
\begin{proposition}
Let $(\GrDomain)$ be a $G^{r}$-domain, where $\Delta$ is an $n$-dimensional simplicial complex and  all transition maps in $\Phi$ are identities, then the chain complex $(\splineRing_{\bullet},\delta_{\bullet})$ defined by  \eqref{eq:complexF} and \eqref{eq:delta} in Section \ref{section:generalized_spline_complex} is isomorphic to the Billera-Schenck-Stillman spline complex $S_{\bullet}/I_{\bullet}$.
\end{proposition}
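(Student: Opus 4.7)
My plan is to construct, face by face, an explicit $\RR$-algebra isomorphism $\iota_\alpha \colon \splineRing(\alpha) \xrightarrow{\sim} S(\alpha)/J(\alpha)$ and then check that the collection $\{\iota_\alpha\}$ intertwines the boundary operators $\delta_\bullet$ and $\bar\partial_\bullet$.

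The first step is to exploit the hypothesis that every transition map in $\Phi$ is an identity. For each $\phi_{ij}\in\Phi$ take the lift $\widetilde{\phi}_{ij}(u_{ik})=u_{jk}$ for $k=1,\dots,n$. The compatibility condition~\ref{cond3} then forces all the coordinate identifications produced by chains of transition maps around a face to agree, so the rings $\ambRing(\sigma)$ for $\sigma\in\Delta_n$ can be simultaneously identified with a single polynomial ring $S=\RR[x_1,\dots,x_n]$ via $u_{ik}\leftrightarrow x_k$. Because $\Delta$ is simplicial, each $(n-1)$-face $\tau$ lies on a unique affine hyperplane, so under this identification $I_{\sigma_1}(\tau)=I_{\sigma_2}(\tau)=\langle\ell_\tau\rangle$ in $S$, where $\ell_\tau$ is the linear form defining that hyperplane; this matches the BSS choice in the appendix.

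For $\sigma\in\Delta_n$ set $\iota_\sigma=\id$. For $\tau=\sigma_1\cap\sigma_2\in\Delta_{n-1}^\circ$ the ideal $\idealComplex(\tau)\subseteq\ambRing(\sigma_1)\otimes\ambRing(\sigma_2)$ contains the diagonal relations $u_{1k}-u_{2k}$ (since the lift is the identity), modulo which the tensor product collapses to $S$; the remaining pieces $I_{\sigma_1}(\tau)^{r+1}$ and $I_{\sigma_2}(\tau)^{r+1}$ both map to $\langle\ell_\tau^{r+1}\rangle=J(\tau)$. This yields $\iota_\tau\colon\splineRing(\tau)\xrightarrow{\sim}S/J(\tau)$. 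For a face $\alpha\in\Delta_k^\circ$ with $k\leqslant n-2$, the same argument applies with more pieces: $\ambRing(\alpha)=\bigotimes_{\sigma\supseteq\alpha}\ambRing(\sigma)$, and the diagonal relations accumulated from every $\idealComplex(\tau)$ with $\tau\supseteq\alpha$ identify the tensor product with $S$, while the residual part assembles into $\sum_{\tau\supseteq\alpha}\langle\ell_\tau^{r+1}\rangle=J(\alpha)$, yielding $\iota_\alpha\colon\splineRing(\alpha)\xrightarrow{\sim}S/J(\alpha)$.

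Finally, both $\delta_k$ and $\bar\partial_k$ are built from the same cellular signs $a_{\alpha,\beta}$ of \eqref{eq:def_bdry_map}, tensored with a ring homomorphism. On the $\splineRing_\bullet$ side this ring map comes from the inclusion $\ambRing(\alpha)\hookrightarrow\ambRing(\beta)$; on the BSS side it is the natural quotient $S/J(\alpha)\to S/J(\beta)$. Under the isomorphisms $\iota_\bullet$ both reduce to the canonical projection $S/J(\alpha)\to S/J(\beta)$, so every relevant square commutes and $\iota_\bullet$ is an isomorphism of chain complexes. The main obstacle I expect is the verification for faces of codimension $\geqslant 2$: one must check that the diagonal relations contributed by the different $\tau\supseteq\alpha$ jointly collapse $\bigotimes_{\sigma\supseteq\alpha}\ambRing(\sigma)$ down to exactly one copy of $S$, without introducing spurious relations, and that no residual ideal beyond $\sum_{\tau\supseteq\alpha}\langle\ell_\tau^{r+1}\rangle$ survives. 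Connectedness of the star of $\alpha$, which follows from the manifold-with-boundary hypothesis in Definition~\ref{def:Grdomain} together with the simplicial structure, is precisely what makes this collapse clean.
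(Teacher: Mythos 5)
Your proposal is correct and follows essentially the same route as the paper: identify each $\ambRing(\sigma)$ with $S$ via $u_{\sigma j}\mapsto x_j$, show the diagonal relations in $\idealComplex(\alpha)$ collapse the tensor product so that $\splineRing(\alpha)\cong S/J(\alpha)$ for every interior face, and check that these isomorphisms commute with the boundary maps built from the same cellular signs. The codimension-$\geqslant 2$ collapse you flag (handled via connectedness of the star of an interior face) is exactly the point the paper treats with its brief ``similarly'' step, so nothing essential is missing.
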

\begin{proof}
If for all interior faces $\tau\in\Delta_{n-1}^\circ$ the transition maps in the $(\GrDomain)$ are identities, then the polynomial ring
\begin{equation*}
\ambRing(\sigma)\cong S,
\end{equation*}
for every $\sigma\in\Delta_n$.
In particular, if $\tau=\sigma_{1}\cap\sigma_{2}$ for $\sigma_i\in\Delta_n$, then \eqref{eq:ringalpha} takes the form
\begin{equation*}
\ambRing(\tau)\cong S\otimes_{\RR} S,
\end{equation*}
and \eqref{eq:idealtau} can be rewritten as
\begin{equation*}
\idealComplex(\tau)=\bigl\langle u_{11}-u_{21},\dots,~u_{1n}-u_{2n}\bigr\rangle+I_{\sigma_{1}}(\tau)^{r+1}\cdot\ambRing(\tau)+I_{\sigma_{2}}(\tau)^{r+1}\cdot\ambRing(\tau),
\end{equation*}
where $I_{\sigma_{i}}(\tau)=\bigl\langle \ell_{\tau}(u_{i1},\dots,u_{in})\bigr\rangle\subseteq \mathcal{R}(\sigma_i)$, for $i=1,2$. 
Take $J(\tau)=\bigl\langle \ell_{\tau}(x_{1},\dots,x_{n})^{r+1}\bigr\rangle\subseteq S$. 
It is clear that the homomorphism $\ambRing(\tau)\to S$ given by $
u_{ij}\mapsto x_{j}$, for $i=1,2$ and $j=1,\dots,n$, 
induces an isomorphism $\splineRing(\tau)\cong S/J(\tau)$. 
It is also easy to check that the diagram
\begin{center}
\begin{tikzcd}
 \bigoplus_{\sigma\in\Delta_{n}}\mathcal{F}^{r+1}(\sigma) \arrow[d, "\cong"] \arrow[r, "\delta_{n}"] & \bigoplus_{\tau\in\Delta_{n-1}^{\circ}}\mathcal{F}^{r+1}(\tau) \arrow[d, "\cong"] \\
 \bigoplus_{\sigma\in\Delta_{n}}S \arrow[r, "\partial_{n}"] & \bigoplus_{\tau\in\Delta_{n-1}^{\circ}}S/J(\tau) 
\end{tikzcd}
\end{center}
commutes. 
Similarly, for every $\alpha\in \Delta_{k}^{\circ}$, for $k=0,\dots,n-2$,  the homomorphism 
$\ambRing(\alpha)\to S$, given by
$u_{\sigma j}\mapsto x_{j}$ where $(u_{\sigma 1},\dots,u_{\sigma n})$ are the coordinates of $X(\sigma)$ for every $\sigma\supseteq\alpha$,
induces an isomorphism $\splineRing(\alpha)\cong S/J(\alpha)$.
Hence, also in this case the diagram
\begin{center}
\begin{tikzcd}
 \bigoplus_{\beta\in\Delta_{k+1}^{\circ}}\mathcal{F}^{r+1}(\beta) \arrow[d, "\cong"] \arrow[r, "\delta_{k+1}"] & \bigoplus_{\alpha\in\Delta_{k}^{\circ}}\mathcal{F}^{r+1}(\alpha) \arrow[d, "\cong"] \\
 \bigoplus_{\beta\in\Delta_{k+1}^{\circ}}S \arrow[r, "\partial_{k+1}"] & \bigoplus_{\alpha\in\Delta_{k}^{\circ}}S/J(\alpha) 
\end{tikzcd}
\end{center}
commutes. 
It follows that $\splineRing_{\bullet}\cong S_{\bullet}/I_{\bullet}$ as chain complexes.
\end{proof}
\section*{Acknowledgement}
N.~Villamizar and B.~Yuan were supported by the UK Engineering and Physical Sciences Research Council (EPSRC) New Investigator Award EP/V012835/1. A.~Mantzaflaris and N.~Villamizar wish to thank the support by The Alliance Hubert Curien Programme, project number: 515492678.

\bibliographystyle{plain}
\bibliography{ref.bib}

\begin{thebibliography}{10}

\bibitem{AS4r}
P.~Alfeld and L.~Schumaker.
\newblock The dimension of bivariate spline spaces of smoothness {$r$} for
  degree {$d\geq 4r+1$}.
\newblock {\em Constr. Approx.}, 3(2):189--197, 1987.

\bibitem{AS3r}
P.~Alfeld and L.~Schumaker.
\newblock On the dimension of bivariate spline spaces of smoothness {$r$} and
  degree {$d=3r+1$}.
\newblock {\em Numer. Math.}, 57(6-7):651--661, 1990.

\bibitem{Tri}
P.~Alfeld and L.~Schumaker.
\newblock Bounds on the dimensions of trivariate spline spaces.
\newblock {\em Adv. Comput. Math.}, 29(4):315--335, 2008.

\bibitem{LocSup}
P.~Alfeld, L.~Schumaker, and M.~Sirvent.
\newblock On dimension and existence of local bases for multivariate spline
  spaces.
\newblock {\em J. Approx. Theory}, 70(2):243--264, 1992.

\bibitem{ASWTet}
P.~Alfeld, L.~Schumaker, and W.~Whiteley.
\newblock The generic dimension of the space of {$C^1$} splines of degree
  {$d\geq 8$} on tetrahedral decompositions.
\newblock {\em SIAM J. Numer. Anal.}, 30(3):889--920, 1993.

\bibitem{beccari_rags:_2014}
C.~Beccari, D.~Gonsor, and M.~Neamtu.
\newblock {{RAGS}}: {{Rational}} geometric splines for surfaces of arbitrary
  topology.
\newblock {\em Comput. Aided Geom. Des.}, 31(2):97--110, 2014.

\bibitem{bercovier2017smooth}
M.~Bercovier and T.~Matskewich.
\newblock {\em Smooth B{\'e}zier surfaces over unstructured quadrilateral
  meshes}, volume~22.
\newblock Springer, 2017.

\bibitem{billera1988homology}
L.~Billera.
\newblock Homology of smooth splines: generic triangulations and a conjecture
  of {S}trang.
\newblock {\em Trans. Amer. Math. Soc.}, 310(1):325--340, 1988.

\bibitem{Alg}
L.~Billera.
\newblock The algebra of continuous piecewise polynomials.
\newblock {\em Adv. Math.}, 76(2):170--183, 1989.

\bibitem{DimSeries}
L.~Billera and L.~Rose.
\newblock A dimension series for multivariate splines.
\newblock {\em Discrete Comput. Geom.}, 6(2):107--128, 1991.

\bibitem{birner:hal-02271820}
K.~Birner, B.~J{\"u}ttler, and A.~Mantzaflaris.
\newblock {Bases and dimensions of $C^1$--smooth isogeometric splines on
  volumetric two-patch domains}.
\newblock {\em Graph. Models}, 99:46--56, 2018.

\bibitem{BirnerKapl19}
K.~Birner and M.~Kapl.
\newblock The space of ${C}^1$-smooth isogeometric spline functions on
  trilinearly parameterized volumetric two-patch domains.
\newblock {\em Comput. Aided Geom. Des.}, 70:16--30, 2019.

\bibitem{blidia2017}
A.~Blidia, B.~Mourrain, and N.~Villamizar.
\newblock ${G}^1$-smooth splines on quad meshes with 4-split macro-patch
  elements.
\newblock {\em Comput. Aided Geom. Des.}, 52-53:106--125, 2017.

\bibitem{blidia2020}
A.~Blidia, B.~Mourrain, and G.~Xu.
\newblock Geometrically smooth spline bases for data fitting and simulation.
\newblock {\em Comput. Aided Geom. Des.}, 78:101814, 15, 2020.

\bibitem{Bonneau2014FlexibleGI}
G.~Bonneau and S.~Hahmann.
\newblock Flexible ${G}^1$ interpolation of quad meshes.
\newblock {\em Graph. Models}, 76(6):669--681, 2014.

\bibitem{Sangalli_2016}
A.~Collin, G.~Sangalli, and T.~Takacs.
\newblock Analysis-suitable {$G^1$} multi-patch parametrizations for {$C^1$}
  isogeometric spaces.
\newblock {\em Comput. Aided Geom. Des.}, 47:93--113, 2016.

\bibitem{Isogeometric}
J.~Cottrell, T.~Hughes, and Y.~Bazilevs.
\newblock {\em Isogeometric analysis: toward integration of CAD and FEA}.
\newblock John Wiley \& sons, Ltd., 2009.

\bibitem{deBoor2001practical}
C.~de~Boor.
\newblock {\em A {{Practical Guide}} to {{Splines}}}.
\newblock {Springer}, {New York}, 2001.

\bibitem{vecchia_2008}
G.~{Della Vecchia}, B.~Jüttler, and M.-S. Kim.
\newblock A construction of rational manifold surfaces of arbitrary topology
  and smoothness from triangular meshes.
\newblock {\em Comput. Aided Geom. Des.}, 25(9):801--815, 2008.

\bibitem{derose1985geometric}
A.~DeRose.
\newblock {\em Geometric continuity: a parametrization independent measure of
  continuity for computer aided geometric design (curves, surfaces, splines)}.
\newblock PhD thesis, University of California, Berkeley, 1985.

\bibitem{DV20b}
M.~{DiPasquale} and N.~{Villamizar}.
\newblock {A lower bound for the dimension of tetrahedral splines in large
  degree}.
\newblock {\em arXiv:2007.12274}, 2020.

\bibitem{diPasquale2021}
M.~DiPasquale and N.~Villamizar.
\newblock A lower bound for splines on tetrahedral vertex stars.
\newblock {\em SIAM J. Appl. Algebra Geom.}, 5(2):250--277, 2021.

\bibitem{farin2002handbook}
G.~Farin.
\newblock {\em Curves and Surfaces for {{CAGD}}: A Practical Guide}.
\newblock Morgan {{Kaufmann}} Series in Computer Graphics and Geometric
  Modeling. {Morgan Kaufmann}, {San Francisco, CA}, 5th ed edition, 2001.

\bibitem{farin2002handbook2}
G.~Farin, J.~Hoschek, and M.-S. Kim.
\newblock {\em Handbook of {C}omputer {A}ided {G}eometric {D}esign}.
\newblock Elsevier, 2002.

\bibitem{M2}
D.~Grayson and M.~Stillman.
\newblock Macaulay2, a software system for research in algebraic geometry.
\newblock Available at \href{http://www.math.uiuc.edu/Macaulay2/}%
  {www.math.uiuc.edu/Macaulay2}.

\bibitem{gregory1989}
J.~Gregory.
\newblock Geometric continuity.
\newblock In T.~Lyche and L.~Schumaker, editors, {\em Mathematical Methods in
  Computer Aided Geometric Design}, pages 353--371. Academic Press, 1989.

\bibitem{grimm_1995}
C.~Grimm and J.~Hughes.
\newblock Modeling surfaces of arbitrary topology using manifolds.
\newblock In {\em Proceedings of SIGGRAPH'95}, Computer Graphics Proceedings,
  Annual Conference Series, pages 359--368, 1995.

\bibitem{gu_manifold_2008}
X.~Gu, Y.~He, M.~Jin, F.~Luo, H.~Qin, and S.~Yau.
\newblock Manifold splines with a single extraordinary point.
\newblock {\em Comput.-Aided Des.}, 40(6):676--690, 2008.

\bibitem{gu_manifold_2006}
X.~Gu, Y.~He, and H.~Qin.
\newblock Manifold splines.
\newblock {\em Graph. Models}, 68(3):237--254, 2006.

\bibitem{hahn1989geometric}
J.~Hahn.
\newblock Geometric continuous patch complexes.
\newblock {\em Comput. Aided Geom. Des.}, 6(1):55--67, 1989.

\bibitem{hatcher}
A.~Hatcher.
\newblock {\em Algebraic topology}.
\newblock Cambridge University Press, Cambridge, 2002.

\bibitem{he_manifold_2006}
Y.~He, K.~Wang, H.~Wang, X.~Gu, and H.~Qin.
\newblock Manifold {T}-spline.
\newblock In {\em International {Conference} on {Geometric} {Modeling} and
  {Processing}}, pages 409--422. Springer, 2006.

\bibitem{H91}
D.~Hong.
\newblock Spaces of bivariate spline functions over triangulation.
\newblock {\em Approx. Theory Appl.}, 7(1):56--75, 1991.

\bibitem{smoothiga2021}
{T.} Hughes, G.~Sangalli, T.~Takacs, and D.~Toshniwal.
\newblock {\em Smooth multi-patch discretizations in Isogeometric Analysis},
  volume~22 of {\em Handbook of {N}umerical {A}nalysis}, pages 467--543.
\newblock Elsevier, 2021.

\bibitem{kapl2017}
M.~Kapl, G.~Sangalli, and T.~Takacs.
\newblock Dimension and basis construction for analysis-suitable ${G}^1$
  two-patch parameterizations.
\newblock {\em Comput. Aided Geom. Des.}, 52-53:75--89, 2017.

\bibitem{kapl2019isogeometric}
M.~Kapl, G.~Sangalli, and T.~Takacs.
\newblock Isogeometric analysis with ${C}^1$ functions on planar, unstructured
  quadrilateral meshes.
\newblock {\em The SMAI Journal of computational mathematics}, S5:67--86, 2019.

\bibitem{KAPL201955}
M.~Kapl, G.~Sangalli, and T.~Takacs.
\newblock An isogeometric ${C}^1$ subspace on unstructured multi-patch planar
  domains.
\newblock {\em Comput. Aided Geom. Des.}, 69:55--75, 2019.

\bibitem{LaiSchumaker}
M-J. Lai and L.~Schumaker.
\newblock {\em Spline functions on triangulations}, volume 110 of {\em
  Encyclopedia of Mathematics and its Applications}.
\newblock Cambridge University Press, Cambridge, 2007.

\bibitem{Lau}
W.~Lau.
\newblock A lower bound for the dimension of trivariate spline spaces.
\newblock {\em Constr. Approx.}, 23(1):23--31, 2006.

\bibitem{lee2012smooth}
J.~Lee.
\newblock {\em Introduction to topological manifolds}, volume 202 of {\em
  Graduate Texts in Mathematics}.
\newblock Springer, New York, second edition, 2011.

\bibitem{lin_adaptive_2007}
H.~Lin, W.~Chen, and H.~Bao.
\newblock Adaptive patch-based mesh fitting for reverse engineering.
\newblock {\em Comput.-Aided Des.}, 39(12):1134--1142, 2007.

\bibitem{loop_smooth_1994}
C.~Loop.
\newblock Smooth spline surfaces over irregular meshes.
\newblock In {\em Proceedings of the 21st Annual Conference on {{Computer}}
  Graphics and Interactive Techniques}, pages 303--310. {ACM}, 1994.

\bibitem{marsala3656945}
M.~Marsala, A.~Mantzaflaris, and B.~Mourrain.
\newblock ${G}^1$-smooth biquintic approximation of catmull-clark subdivision
  surfaces.
\newblock {\em Comput. Aided Geom. Des.}, 99:102158, 2022.

\bibitem{mourrain2016dimension}
B.~Mourrain, R.~Vidunas, and N.~Villamizar.
\newblock Dimension and bases for geometrically continuous splines on surfaces
  of arbitrary topology.
\newblock {\em Comput. Aided Geom. Des.}, 45:108--133, 2016.

\bibitem{mourrain2013}
B.~Mourrain and N.~Villamizar.
\newblock Homological techniques for the analysis of the dimension of
  triangular spline spaces.
\newblock {\em J. Symbolic Comput.}, 50:564--577, 2013.

\bibitem{mourrainV2014}
B.~Mourrain and N.~Villamizar.
\newblock Bounds on the dimension of trivariate spline spaces: a homological
  approach.
\newblock {\em Math. Comput. Sci.}, 8(2):157--174, 2014.

\bibitem{Peters:1994}
J.~Peters.
\newblock Constructing {$C^1$} surfaces of arbitrary topology using biquadratic
  and bicubic splines.
\newblock In {\em Designing fair curves and surfaces}, Geom. Des. Publ., pages
  277--293. SIAM, Philadelphia, PA, 1994.

\bibitem{peters2002_handbook}
J.~Peters.
\newblock Geometric continuity.
\newblock In {\em Handbook of {C}omputer {A}ided {G}eometric {D}esign}, pages
  193--227. North-Holland, Amsterdam, 2002.

\bibitem{peters_complexity_2010}
J.~Peters and J.~Fan.
\newblock On the complexity of smooth spline surfaces from quad meshes.
\newblock {\em Computer Aided Geometric Design}, 27(1):96--105, 2010.

\bibitem{piegl1996nurbs}
L.~Piegl and W.~Tiller.
\newblock {\em The NURBS Book}.
\newblock Monographs in Visual Communication. Springer Berlin Heidelberg, 2
  edition, 2012.

\bibitem{prautzsch_freeform_1997}
H.~Prautzsch.
\newblock Freeform splines.
\newblock {\em Comput. Aided Geom. Des.}, 14(3):201--206, 1997.

\bibitem{reif_biquadratic_1995}
U.~Reif.
\newblock Biquadratic {$G$}-spline surfaces.
\newblock {\em Comput. Aided Geom. Des.}, 12(2):193--205, 1995.

\bibitem{sabin1968conditions}
M.~Sabin.
\newblock Conditions for continuity of surface normals between adjacent
  parametric surfaces.
\newblock In {\em Tech. Rep.} British Aircraft Corporation Ltd, 1968.

\bibitem{CohVan}
H.~Schenck and P.~Stiller.
\newblock Cohomology vanishing and a problem in approximation theory.
\newblock {\em Manuscripta Math.}, 107(1):43--58, 2002.

\bibitem{MinReg}
H.~Schenck and M.~Stillman.
\newblock A family of ideals of minimal regularity and the {H}ilbert series of
  {$C^r(\hat \Delta)$}.
\newblock {\em Adv. in Appl. Math.}, 19(2):169--182, 1997.

\bibitem{LCoho}
H.~Schenck and M.~Stillman.
\newblock Local cohomology of bivariate splines.
\newblock {\em J. Pure Appl. Algebra}, 117/118:535--548, 1997.
\newblock Algorithms for algebra (Eindhoven, 1996).

\bibitem{yuan2020}
H.~Schenck, M.~Stillman, and B.~Yuan.
\newblock A new bound for smooth spline spaces.
\newblock {\em J. Comb. Algebra}, 4(4):359--367, 2020.

\bibitem{SchumakerLower}
L.~Schumaker.
\newblock On the dimension of spaces of piecewise polynomials in two variables.
\newblock In {\em Multivariate approximation theory ({P}roc. {C}onf., {M}ath.
  {R}es. {I}nst., {O}berwolfach, 1979)}, volume~51 of {\em Internat. Ser.
  Numer. Math.}, pages 396--412. Birkh\"{a}user, Basel-Boston, Mass., 1979.

\bibitem{SchumakerU}
L.~Schumaker.
\newblock Bounds on the dimension of spaces of multivariate piecewise
  polynomials.
\newblock {\em Rocky Mountain J. Math.}, 14(1):251--264, 1984.
\newblock Surfaces (Stanford, Calif., 1982).

\bibitem{shi_practical_2004}
X.~Shi, T.~Wang, and P.~Yu.
\newblock A practical construction of {{$G^1$}} smooth biquintic {{B}}-spline
  surfaces over arbitrary topology.
\newblock {\em Comput.-Aided Des.}, 36(5):413--424, 2004.

\bibitem{Strang}
G.~Strang.
\newblock Piecewise polynomials and the finite element method.
\newblock {\em Bull. Amer. Math. Soc.}, 79:1128--1137, 1973.

\bibitem{strang1973analysis}
G.~Strang and G.J. Fix.
\newblock {\em An Analysis of the Finite Element Method}.
\newblock Prentice-Hall Englewood Cliffs, 2 edition, 2008.

\bibitem{stefan}
{\c{S}}.~Toh\v{a}neanu.
\newblock Smooth planar {$r$}-splines of degree {$2r$}.
\newblock {\em J. Approx. Theory}, 132(1):72--76, 2005.

\bibitem{tosun2011}
E.~Tosun and D.~Zorin.
\newblock Manifold-based surfaces with boundaries.
\newblock {\em Computer Aided Geometric Design}, 28(1):1--22, 2011.

\bibitem{WhiteleyComb}
W.~Whiteley.
\newblock The combinatorics of bivariate splines.
\newblock In {\em Applied geometry and discrete mathematics}, volume~4 of {\em
  DIMACS Ser. Discrete Math. Theoret. Comput. Sci.}, pages 587--608. Amer.
  Math. Soc., Providence, RI, 1991.

\bibitem{ying_simple_2004}
L.~Ying and D.~Zorin.
\newblock A simple manifold-based construction of surfaces of arbitrary
  smoothness.
\newblock In {\em {SIGGRAPH}'04}, pages 271--275. ACM Press, 2004.

\bibitem{YS19}
B.~Yuan and M.~Stillman.
\newblock A counter-example to the {S}chenck-{S}tiller ``{$2r+1$}'' conjecture.
\newblock {\em Adv. in Appl. Math.}, 110:33--41, 2019.

\end{thebibliography}

\end{document}